\documentclass[12pt]{amsart}
\usepackage{amsmath,mathtools,amsthm,amsfonts,bigints,amssymb, mathrsfs, tikz-cd, xcolor, float}
\usepackage{tikz}
\usetikzlibrary{patterns}
\usepackage{pgf,tikz}

\usepackage{pgfplots}
\usepgfplotslibrary{fillbetween}

\usetikzlibrary{arrows}

\newtheorem{theorem}{Theorem}[section]
\newtheorem{lemma}[theorem]{Lemma}

\newtheorem{corollary}[theorem]{Corollary}

\theoremstyle{definition}
\newtheorem{definition}[theorem]{Definition}

\newtheorem{remark}[theorem]{Remark}

\newcommand{\R}{\mathbb R}%
\newcommand{\M}{\mathcal M}%
\newcommand{\C}{\mathbb C}%
\newcommand{\N}{\mathbb N}%
\newcommand{\PP}{\mathcal P}%
\newcommand{\Ss}{\mathcal S}%
\newcommand{\X}{\mathbb X}%

\newcommand{\B}{\mathscr B}%
\numberwithin{equation}{section}

\usepackage[colorlinks=true,linkcolor=purple
,citecolor=blue]{hyperref}
\makeatletter

\renewcommand\subsubsection{\@secnumfont}{\bfseries}%
\renewcommand\subsubsection{\@startsection{subsubsection}{3}
  \z@{.5\linespacing\@plus.7\linespacing}{-.5em}%
  {\normalfont\bfseries}}

  \makeatother
\makeatletter
\@namedef{subjclassname@2020}{%
  \textup{2020} Mathematics Subject Classification}
\makeatother

\begin{document}

\title[Boundary behaviour of eigenfunctions and superharmonic functions]{Boundary behaviour of eigenfunctions and superharmonic functions on Harmonic manifolds of purely exponential volume growth}

\author[Utsav Dewan]{Utsav Dewan}
\address{Department of Mathematics, Indian Institute of Technology Bombay, Powai, Mumbai-400076, India}
\email{utsav@math.iitb.ac.in}

\subjclass[2020]{Primary 31C05,  53C23; Secondary 31C15, 53C20.}

\keywords{Non-tangential limits, Tangential limits, Eigenfunctions, Superharmonic functions, Harmonic manifolds, Hausdorff dimension}

\begin{abstract}
On $\mathbb{X}$, a non-positively curved harmonic manifold of purely exponential volume growth, of dimension $n \ge 3$, we study certain quantitative aspects of the boundary behaviour of eigenfunctions and superharmonic functions. More precisely, we first consider $u$, complex-valued eigenfunctions lying outside the $L^2$-spectrum of $\Delta$ and prove that/obtain:
\begin{enumerate}
\item specific weighted non-tangential limits of $u$ exist a.e. on the boundary and agree with the Radon-Nikodym derivative of the absolutely continuous component of its boundary measure, with respect to the visibility measure;
\item Hausdorff dimension and Hausdorff measure estimates of the boundary exceptional sets where radial limits of $u$ blow-up;
\item sharpness of the size estimates obtained in (2).
\end{enumerate}
Then we focus on positive superharmonic functions $f$ and prove that/obtain:
\begin{itemize}
\item[(4)] $f$ may not have non-tangential limits, by constructing a counter-example whose non-tangential limits blow up at all boundary points;
\item[(5)] if, however, the Riesz measure of $f$ is absolutely continuous with respect to the volume measure, with density belonging to specific weighted $L^p$ spaces for $p>n/2$, then non-tangential limits of $f$ exist a.e. on the boundary and agree with the Radon-Nikodym derivative of the absolutely continuous component of the boundary measure of its greatest harmonic minorant, with respect to the visibility measure;  
\item[(6)] the integrability range of $p>n/2$ in (5) is sharp, by means of counter-examples;
\item[(7)] if in addition to (5), the density has extra decay near infinity and the asymptotic spherical averages of $f$ also decay, then $f$ has tangential limits on the boundary with the exceptional set having prescribed Hausdorff dimension.
\end{itemize}
The results (2)-(7) are new even for the homogeneous setting of rank one Riemannian symmetric spaces of non-compact type and Damek-Ricci spaces. Our arguments are based on potential theory adapted to the intrinsic Gromov hyperbolic geometry of $\mathbb{X}$.
\end{abstract}

\maketitle
\tableofcontents

\section{Introduction}
\label{sec1}
The connection between the geometry of a complete Riemannian manifold and the existence of non-constant positive harmonic functions thereon, has long been a focal point in geometric analysis. On one hand, in the regime of non-negative Ricci curvature, Yau \cite{Yau} proved that every positive harmonic function must be constant. On the other hand, on Hadamard manifolds of pinched negative sectional curvature, Anderson-Schoen \cite{AS} exhibited abundance of non-constant positive harmonic functions by establishing a natural homeomorphism between the Martin boundary and the boundary at infinity. 

\medskip 

In \cite{AS}, the authors also proved the existence of non-tangential limits of positive harmonic functions, almost everywhere on the boundary with respect to the harmonic measures, obtaining a remarkable generalization of the classical Fatou's theorem \cite{Fatou}. This study was furthered by Ancona for more general elliptic operators in \cite{Ancona}.

\medskip

Due to the negative curvature, the Laplace-Beltrami operator $\Delta$ has a spectral gap and hence harmonic functions lie outside the $L^2$-spectrum of $\Delta$. Thus in light of the above results in \cite{AS, Ancona}, one is naturally led to ask about the boundary behaviour of other eigenfunctions of $\Delta$, lying outside the $L^2$-spectrum:
\begin{itemize}
\item[($\mathcal{Q}1$)]  {\it What is the admissible rate of convergence of non-tangential boundary limits for other eigenfunctions, on a full harmonic measure subset of the boundary at infinity?}
\item[($\mathcal{Q}2$)] {\it How does a positive eigenfunction behave along radial geodesic rays, on the complement of this full measure subset of the boundary? More precisely, how quickly can a positive eigenfunction grow or how large can the exceptional set (in the boundary) be where the positive eigenfunction  blows up faster than a prescribed rate?}
\end{itemize}

The above quantitative questions largely remain unanswered in the generality of Hadamard manifolds of pinched negative curvature. In the simplest case of the real hyperbolic ball, however, the problem ($\mathcal{Q}1$) is well understood \cite{Minemura}. In fact, studies pertaining to the problem ($\mathcal{Q}1$) extend to the class of homogeneous Hadamard manifolds of pinched negative curvature, known as rank one Riemannian symmetric spaces of non-compact type \cite{Michelson, Sjogren, Schlichtkrull}.

\medskip

Besides the delicate homogeneous structure of having a large isometry group acting transitively on the unit tangent bundle, the rank one Riemannian symmetric spaces of non-compact type $\X$ enjoy two interesting fine and coarse geometric properties. Firstly, for any point $x \in \X$, there exists a non-constant harmonic function on a punctured neighborhood of $x$ which is radial around $x$, i.e., only depends on the geodesic distance from $x$. Such manifolds are known as {\it harmonic manifolds}. On the other hand, they also satisfy the coarse geometric property of {\it purely exponential volume growth}: there exists a constant $h>0$ called the {\em logarithmic volume entropy}, such that the volume of geodesic balls $B(x,R)$ with center $x \in \X$ and radius $R>1$ satisfy:
\begin{equation} \label{volume_growth}
vol\left(B(x,R)\right) \asymp e^{hR}\:.
\end{equation}

It is noteworthy to mention that the class of harmonic manifolds of purely exponential volume growth is much larger than the class of rank one Riemannian symmetric spaces of non-compact type, as it also includes a general class of non-symmetric, non-compact harmonic manifolds. Introduced by Damek-Ricci in \cite{DR}, these are non-unimodular, solvable extensions of Heisenberg type groups $N$, obtained by letting $A=\R^+$ act on $N$ by non-isotropic dilations, and have come to be known as harmonic $NA$ groups or Damek-Ricci spaces. The class of rank one Riemannian Symmetric spaces of noncompact type (barring the degenerate case of the real hyperbolic spaces) is contained in (and in fact accounts for a very small subclass of) the more general class of Damek-Ricci spaces \cite[p. 643]{ADY}. In addition to being non-symmetric, Damek-Ricci spaces are also non-positively curved and are the only known examples of non-compact, non-flat harmonic manifolds. In the setting of Damek-Ricci spaces, boundary behaviour of eigenfunctions lying outside the $L^2$-spectrum of $\Delta$, has recently been studied in \cite{Ray, ProcMathSci}.

\medskip

The harmonic analysis of rank one Riemannian symmetric spaces of non-compact type and Damek-Ricci spaces depends intrinsically on their Lie algebraic structures. Moreover, their Martin boundary is naturally identified with the one-point compactification of the homogeneous group $N$. Thus we are prompted to ask whether similar quantitative behaviours persist in general non-positively curved harmonic manifolds $\X$ of purely exponential volume growth, defined solely by coarse and fine geometric structures, with their boundary at infinity $\partial \X$ formed through an asymptotic dynamical process?

\medskip

We now commence our exploration on harmonic manifolds. For unexplained notations and terminologies, we refer the reader to Section \ref{sec2}. All manifolds in our discussion are assumed to be complete and simply-connected. Let $\X$ be a non-positively curved harmonic manifold of purely exponential volume growth with mean curvature of horospheres $h>0$ with dimension $\ge 3$. In this case, the mean curvature of horospheres coincides with the logarithmic volume entropy defined in (\ref{volume_growth}). We fix an origin $o \in \X$ and let $\partial \X$ be the boundary at infinity. Now for a $\xi \in \partial \X$, let $\gamma_\xi$ denote the unit-speed geodesic ray such that $\gamma_\xi(0)=o,\:\gamma_\xi(+\infty)=\xi$ and $d(o,\gamma_\xi(t))=t$ for all $t \in (0,+\infty)$. Then the Poisson kernel of $\X$ is given by,
 \begin{equation*} 
 P(x,\xi) = e^{-hB_{\xi}(x)} \:,\text{ for all } x \in \X,\: \xi \in \partial \X,
 \end{equation*}
 where $B_{\xi}(x)$ is the Busemann function, defined by
 \begin{equation} \label{busemann} 
 B_{\xi}(x) = \displaystyle\lim_{t \to \infty} \left(d\left(x,\gamma_\xi(t)\right) - d\left(o,\gamma_\xi(t)\right)\right) \:.
 \end{equation} 

The Martin representation formula \cite[Corollary 5.13]{KL} asserts that the positive harmonic functions on $\X$ are given by Poisson  integrals of Radon measures $\mu$ on $\partial \X$, that is
\begin{equation} \label{martin_rep}
\PP[\mu](x):= \int_{\partial \X} P(x,\xi)\:d\mu(\xi)\:,\: x \in \X\:.
\end{equation}
More generally, for a complex measure $\mu$ on $\partial \X$, $u:=\PP[\mu]$, the Poisson integral of $\mu$ is a complex-valued harmonic function on $\X$. Now a speciality of our present setting is that we can obtain more general eigenfunctions of $\Delta$, simply by taking suitable powers of the Poisson kernel. More precisely, setting $\rho=h/2$, for $\lambda \in \C$, by considering the horospherical part of $\Delta$, we note that the function $x \mapsto e^{(i\lambda -\rho)B_\xi(x)},\:\xi \in \partial \X,$ is an eigenfunction of $\Delta$ with eigenvalue $-(\lambda^2+\rho^2)$ \cite[Proposition 3.3]{BKP}. 

\medskip

Now it is well-known \cite[Corollary 5.2]{PS} that, the supremum of the spectrum and of the essential spectrum agree and equal $-\rho^2$. Thus for outside the spectrum, i.e. for $\lambda=i\beta$, with $\beta>0$, it becomes natural to study the boundary behaviour of the generalized Poisson integrals of complex measures $\mu$:
\begin{equation} \label{generalized_poisson_integral}
\PP_{i\beta}[\mu](x):= \int_{\partial \X} P_{i\beta}(x,\xi)\:d\mu(\xi)\:,\: x \in \X\:,
\end{equation}
where
\begin{equation} \label{generalized_poisson_kernel}
P_{i\beta}(x,\xi):=C_\beta \left(P(x,\xi)\right)^{\frac{\beta}{h}+\frac{1}{2}}= C_\beta \: e^{-(\beta +\rho)B_\xi(x)}\:,
\end{equation}
$C_\beta >0$ is a normalizing constant so that
\begin{equation} \label{approx_identity}
e^{-(\beta - \rho)d(o,x)}\int_{\partial \X} P_{i\beta}(x,\xi) \:d\lambda_o(\xi)= 1\:,\: x \in \X\:,
\end{equation}
where $\lambda_o$ is the visibility measure on $\partial \X$ with respect to $o$.

\medskip

This brings us to the notion of non-tangential limits, which is defined in terms of Gromov products:
\begin{definition} \label{non-tangential_limit_defn}
\begin{itemize}
\item[(i)] For $\xi \in \partial \X$ and $\alpha >1$, {\it the non-tangential cone based at $\xi$ with aperture $\alpha$} is defined to be,  
\begin{equation*}
\Gamma_{\alpha}(\xi):= \left\{x \in \X \mid e^{-\left(x |\xi\right)_o}< \alpha e^{-d(o,x)}\right\}\:.
\end{equation*}
\item[(ii)] A function $f$ on $\X$ is said to have non-tangential limit $L \in \C$ at $\xi \in \partial \X$ if
\begin{equation*}
\displaystyle\lim_{\substack{x \to \xi \\ x \in \Gamma_\alpha(\xi)}} f(x)=L\:,\:\:\forall \alpha >1\:.
\end{equation*}
\end{itemize}
\end{definition}

Another important aspect of our study, is the metric geometry at infinity. When the sectional curvature $K_\X \le -1$, there is a natural metric on the boundary at infinity $\partial \X$ defined in terms of the Gromov product,
\begin{equation} \label{visual_metric}
\nu(\xi,\eta):=e^{-(\xi|\eta)_o}\:,\:\:\xi,\eta \in \partial \X\:,
\end{equation}
called the {\em visual metric}. But in the generality of non-positive variable sectional curvature, $\nu$ only defines a quasi-metric. Henceforth, in all our results, the Hausdorff dimensions or Hausdorff outer measures are with respect to $\nu$. The other crucial aspect is the growth of the visibility measure of visual balls
\begin{equation*}
\mathscr{B}(\xi,r):=\left\{\eta \in \partial \X \mid \nu(\xi,\eta)<r\right\}\:, \:\:\xi \in \partial \X\:,\:r \in (0,1)\:.
\end{equation*} 
This prompts us to look at the following definition:
\begin{definition} \label{doubling_defn}
The triplet $\left(\partial \X,\nu,\lambda_o\right)$ is called
 {\it $h$-doubling} if there exists $C>0$ such that for all visual balls
\begin{equation*}
\lambda_o\left(\mathscr{B}(\xi,2r)\right) \le C\:2^h \lambda_o\left(\mathscr{B}(\xi,r)\right)\:.
\end{equation*}
\end{definition}

\begin{remark} \label{boundary_doubling_remark}
In the generality of harmonic manifolds of purely exponential volume growth, only the following bound:
\begin{equation*} 
\lambda_o\left(\B(\xi,r)\right) \le C r^h\:,\:\:r \in (0,1)\:,
\end{equation*}  
is well-known (see \cite[Lemma 6.3]{BKP}). However, the notion of $h$-doubling is particularly relevant as it is satisfied for all the known cases, i.e. rank one Riemannian symmetric spaces of non-compact type and Damek-Ricci spaces.
\end{remark}

We are now ready to present our first result:
\begin{theorem} \label{Fatou_thm}
Let $\X$ be a non-positively curved harmonic manifold of purely exponential volume growth with mean curvature of horospheres $h>0$, with $\left(\partial \X,\nu,\lambda_o\right)$ being $h$-doubling	 and set $\rho=h/2$. Then for complex measures $\mu$ on $\partial \X$ and $\beta >0$, the non-tangential limit of  $e^{-(\beta - \rho)d(o,\cdot)}\PP_{i\beta}[\mu]$ exists at $\lambda_o$-a.e. point on $\partial \X$ and equals the Radon-Nikodym derivative $\frac{d\mu_{ac}}{d\lambda_o}$, where $\mu_{ac}$ is the absolutely continuous component of $\mu$ with respect to $\lambda_o$.
\end{theorem}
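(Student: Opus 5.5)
The plan is the classical Fatou argument: treat the normalized kernel as an approximate identity on $(\partial \X,\nu,\lambda_o)$, dominate it by a radial decreasing combination of averages over visual balls, and bound the non-tangential maximal function by the Hardy--Littlewood maximal function of $\mu$. For $x \in \X$ and $\xi \in \partial \X$ set
\begin{equation*}
K(x,\xi):=e^{-(\beta-\rho)d(o,x)}P_{i\beta}(x,\xi)\:,
\end{equation*}
so that $\int_{\partial \X} K(x,\cdot)\,d\lambda_o=1$ by (\ref{approx_identity}).

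\textbf{Step 1: kernel estimate.} Using the standard comparison for Busemann functions in the Gromov hyperbolic space $\X$,
\begin{equation*}
B_\xi(x) = d(o,x) - 2(x|\xi)_o + O(1)\:,
\end{equation*}
I will derive
\begin{equation*}
K(x,\xi)\asymp e^{-(\beta-\rho)t}\,e^{-(\beta+\rho)(t-2(x|\xi)_o)}=e^{2\rho t}\left(\frac{e^{-(x|\xi)_o}}{e^{-t}}\right)^{-2(\beta+\rho)}e^{-4\rho t}\:,
\end{equation*}
where $t=d(o,x)$. Concretely, writing $r=e^{-t}$, I aim for
\begin{equation*}
K(x,\xi)\lesssim r^{-h}\min\left\{1,\ \left(\frac{r}{e^{-(x|\xi)_o}}\right)^{2\beta+h}\right\}\:.
\end{equation*}
Let $\xi_x$ be the endpoint of the geodesic ray from $o$ through $x$. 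By the quasi-ultrametric property, $e^{-(x|\xi)_o}$ is comparable to $\max\{r,\nu(\xi_x,\xi)\}$. If $x\in\Gamma_\alpha(\eta)$, then $\nu(\xi_x,\eta)\lesssim \alpha r$ as well.

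\textbf{Step 2: maximal inequality.} I decompose $\partial\X$ into the ball $\B(\eta,C\alpha r)$ and the dyadic annuli $\B(\eta,2^{k+1}C\alpha r)\setminus\B(\eta,2^kC\alpha r)$. This gives
\begin{equation*}
|K*\mu|(x)\lesssim_\alpha \sum_{k\ge0} 2^{-k(2\beta+h)}\,r^{-h}\,|\mu|\big(\B(\eta,2^{k}C\alpha r)\big)\:.
\end{equation*}
To turn $r^{-h}$ into an average, I need a lower bound $\lambda_o(\B(\eta,s))\gtrsim s^h$. This is the main obstacle, since Remark \ref{boundary_doubling_remark} only supplies the upper bound. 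I would try to obtain it from (\ref{approx_identity}) combined with Step 1: integrating $K(\gamma_\eta(t),\cdot)$ and using the upper bound $\lambda_o(\B(\eta,s))\le Cs^h$ on all annuli except the innermost ones shows that a fixed fraction of the unit mass of $K$ sits on $\B(\eta,Ar)$ for some large $A$. This yields $\lambda_o(\B(\eta,Ar))\gtrsim r^h$, and $h$-doubling then removes the factor $A$. The exponent $2\beta+h>h$ makes the annular sum converge, and this convergence is exactly where $\beta>0$ enters. Consequently
\begin{equation*}
\sup_{x\in\Gamma_\alpha(\eta)}|K*\mu|(x)\lesssim_\alpha M\mu(\eta)\:,
\end{equation*}
where $M$ is the centered maximal function with respect to $\lambda_o$.

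\textbf{Step 3: conclusion.} Since $h$-doubling makes $(\partial\X,\nu,\lambda_o)$ a doubling quasi-metric measure space, the Vitali covering lemma gives the weak $(1,1)$ bound for $M$ and the Lebesgue differentiation theorem. I write $\mu=f\,d\lambda_o+\mu_s$ with $f=\frac{d\mu_{ac}}{d\lambda_o}$.
\begin{itemize}
\item For the singular part, $M\mu_s$-type differentiation gives $|\mu_s|(\B(\eta,s))/\lambda_o(\B(\eta,s))\to0$ for $\lambda_o$-a.e.\ $\eta$. Feeding this into the annular sum, where the tails are controlled by the geometric decay $2^{-k(2\beta+h)}$ and the total mass, gives non-tangential limit $0$.
\item For the absolutely continuous part at a Lebesgue point $\eta$ of $f$, I write $K*f - f(\eta)=K*(f-f(\eta))$ using the normalization $\int K\,d\lambda_o=1$ and apply the same annular estimate to $|f-f(\eta)|$. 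This gives the limit $f(\eta)$.
\end{itemize}
Since $\alpha>1$ is arbitrary, and it suffices to take countably many apertures $\alpha\in\N$, the theorem follows.
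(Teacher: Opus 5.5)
Your strategy is the paper's. Lemma \ref{maximal_estimate_lemma} dominates the non-tangential maximal function by $\M_{HL}\mu$ through the same ball-plus-dyadic-annuli decomposition, with summability coming from the factor $2^{-2\beta j}$. Lemmas \ref{Sec3_lemma1} and \ref{Sec3_lemma8} obtain the weak $(1,1)$ bound and the vanishing derivative of singular measures from Vitali covering and $h$-doubling. Lemmas \ref{Sec3_lemma9}--\ref{Sec3_lemma10} then treat the singular and absolutely continuous parts as in your Step 3.

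The one real error is in Step 2: you have the direction of the ``main obstacle'' backwards. To dominate $r^{-h}\,|\mu|(\B(\eta,s))$, with $s\asymp r$, by the average $|\mu|(\B(\eta,s))/\lambda_o(\B(\eta,s))$, you need $r^{-h}\lesssim 1/\lambda_o(\B(\eta,s))$. That is the \emph{upper} bound $\lambda_o(\B(\eta,s))\lesssim s^h$, which is exactly the known estimate (\ref{upper_bound_visual_measure}). This is how the paper obtains (\ref{maximal_estimate_pf_eq1}) and (\ref{maximal_estimate_pf_eq3}).

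A lower bound $\lambda_o(\B(\eta,s))\gtrsim s^h$ only gives $1/\lambda_o(\B(\eta,s))\lesssim s^{-h}$, which points the wrong way. So your ``Consequently'' does not follow from the estimate you derive at that point; it follows from the upper bound you had already quoted. Your derivation of the lower bound from (\ref{approx_identity}) is itself correct (it is essentially Corollary \ref{cor2}), but it plays no role in this theorem. Doubling is needed only in the covering arguments.

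There is also a minor slip in Step 1: the middle expression of your display carries a spurious factor $e^{-4\rho t}$. The correct expression is $K(x,\xi)=C_\beta\,r^{-h}\bigl(r/e^{-(x|\xi)_o}\bigr)^{2\beta+h}$. This holds with equality here, since $B_\xi(x)=d(o,x)-2(x|\xi)_o$ exactly in this setting, and it is the bound you go on to use.
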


We now turn our attention to the problem $(\mathcal{Q}2)$, i.e. the size estimates of the blow-up set for radial limits of eigenfunctions, on the boundary $\partial \X$. In order to do so, we do a quick computation. For $\beta>0$, a complex measure $\mu$ on $\partial \X$, $\xi \in \partial \X$ and any $t \in (0,+\infty)$, by (\ref{generalized_poisson_integral}), (\ref{generalized_poisson_kernel}) and the triangle inequality, we have
\begin{equation} \label{bounding_generalized_poisson}
|e^{-(\beta - \rho)t}\PP_{i\beta}[\mu](\gamma_\xi(t))| =C_\beta e^{-(\beta - \rho)t}\left|\int_{\partial \X} P_{i\beta}(\gamma_\xi(t),\eta)\:d\mu(\eta)\right| \le \left(C_\beta |\mu|(\partial \X)\right) e^{ht} \:,
\end{equation}
where $|\mu|(\partial \X)$ is the total variation of $\mu$.

\medskip

Then (\ref{bounding_generalized_poisson}) motivates us to consider for $\beta >0$ and $\alpha \in [0,h]$,  the following sets
\begin{equation} \label{E_alpha}
E_\alpha\left(\PP_{i\beta}[\mu]\right) := \left\{\xi \in \partial \X : \displaystyle\limsup_{t \to +\infty} e^{-(\alpha + \beta -\rho) t} \left|\PP_{i\beta}[\mu]\left(\gamma_{\xi}(t)\right)\right| > 0\right\}\:,
\end{equation}
and
\begin{equation} \label{E_alpha_inf}
E^\infty_\alpha\left(\PP_{i\beta}[\mu]\right) := \left\{\xi \in \partial \X : \displaystyle\limsup_{t \to +\infty} e^{-(\alpha + \beta -\rho) t} \left|\PP_{i\beta}[\mu]\left(\gamma_{\xi}(t)\right)\right| =+\infty\right\} \:.
\end{equation}

\medskip

Removing the $h$-doubling constraint of Theorem \ref{Fatou_thm}, we now estimate the size of these exceptional sets:
\begin{theorem} \label{exceptional_thm}
Let $\X$ be a non-positively curved harmonic manifold of purely exponential volume growth with mean curvature of horospheres $h>0$ and set $\rho=h/2$. Assume that $\beta>0,\:\alpha \in [0,h]$ and $\mu$ is a complex measure on $\partial \X$. Then 
\begin{equation*}
dim_{\mathcal{H}} E_\alpha\left(\PP_{i\beta}[\mu]\right) \le h-\alpha \:, \text{ and }
\mathcal{H}^{h-\alpha}\left(E^\infty_\alpha(\PP_{i\beta}[\mu])\right) = 0 \:.
\end{equation*}
\end{theorem}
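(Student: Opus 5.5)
The plan is to reduce both statements to a density estimate for the total variation measure $|\mu|$ on visual balls, and then apply a Vitali-type covering argument in the quasi-metric space $(\partial\X,\nu)$. The only geometric input is that $\X$ is Gromov hyperbolic (a known fact for harmonic manifolds of purely exponential volume growth). This gives, up to additive constants depending only on $\X$, two estimates for $x=\gamma_\xi(t)$ and $\eta\in\partial\X$:
\[
B_\eta(x) \ge d(o,x)-2(x|\eta)_o - C \quad\text{and}\quad (\gamma_\xi(t)|\eta)_o \le \min\{t,(\xi|\eta)_o\}+C .
\]
Hence the kernel satisfies
\[
e^{-(\beta-\rho)t}P_{i\beta}(\gamma_\xi(t),\eta)\lesssim e^{-(\beta-\rho)t}\,e^{-(\beta+\rho)\left(t-2\min\{t,(\xi|\eta)_o\}\right)} .
\]
If $(\xi|\eta)_o\ge t$, that is $\eta\in\B(\xi,e^{-t})$, the right-hand side is $\lesssim e^{ht}$. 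If $(\xi|\eta)_o=s<t$, it is $\lesssim e^{ht}e^{-2(\beta+\rho)(t-s)}$.

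Next I decompose $\partial\X$ into the ball $\B(\xi,e^{-t})$ and the annuli $\{\eta : t-k-1\le(\xi|\eta)_o<t-k\}$ for $k=0,1,\dots,\lfloor t\rfloor$; the last one absorbs the region $(\xi|\eta)_o$ bounded, which contributes at most $e^{ht}e^{-2(\beta+\rho)t}|\mu|(\partial\X)$. Each annulus is contained in $\B(\xi,e^{-(t-k-1)})$. Writing $r_k=e^{-(t-k)}$, this gives
\[
e^{-(\alpha+\beta-\rho)t}\left|\PP_{i\beta}[\mu](\gamma_\xi(t))\right|\lesssim \sum_{k=0}^{\lfloor t\rfloor+1} e^{(h-\alpha)k}e^{-2(\beta+\rho)k}\,\frac{|\mu|\left(\B(\xi,r_k)\right)}{r_k^{\,h-\alpha}} .
\]
Since $h-\alpha\le h=2\rho<2(\beta+\rho)$, the weights $e^{-(2\beta+2\rho-h+\alpha)k}$ are summable.

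A dominated-convergence argument in $k$ then shows the following. If $\limsup_{r\to0}|\mu|(\B(\xi,r))/r^{h-\alpha}=0$, the left-hand side tends to $0$. The terms with large $k$ are handled by $|\mu|(\B)\le|\mu|(\partial\X)$ together with the exponential weights; this is the delicate bookkeeping step, where one must check that the bound is uniform in $t$. Likewise, if that $\limsup$ is finite, the left-hand side stays bounded. Consequently
\[
E_\alpha\subset\bigcup_{m}F_{1/m} \quad\text{and}\quad E^\infty_\alpha\subset\bigcap_m F_m,
\]
where
\[
F_c=\Big\{\xi:\limsup_{r\to0}|\mu|(\B(\xi,r))/r^{h-\alpha}>c\Big\}.
\]

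Finally I prove the standard density lemma: $\mathcal H^{h-\alpha}(F_c)\le C\,|\mu|(\partial\X)/c$. For fixed $\delta>0$, every $\xi\in F_c$ has arbitrarily small balls with $|\mu|(\B(\xi,r))>c\,r^{h-\alpha}$. A Vitali $K$-covering lemma, valid in quasi-metric spaces with the constant $K$ depending on the quasi-triangle constant of $\nu$, extracts a disjoint subfamily whose $K$-enlargements cover $F_c$. Summing $r_i^{h-\alpha}<|\mu|(\B_i)/c$ over this disjoint family gives the bound. Thus each $F_{1/m}$ has finite $\mathcal H^{h-\alpha}$-measure, so $dim_{\mathcal H}E_\alpha\le h-\alpha$. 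Also $\mathcal H^{h-\alpha}(E^\infty_\alpha)\le C|\mu|(\partial\X)/m$ for every $m$, hence it is $0$.

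I expect the main obstacle to be the uniform constants in the Gromov-product estimates for the Busemann function in this non-homogeneous setting, which need care in the regime $(\xi|\eta)_o\approx t$. The covering lemma for the quasi-metric $\nu$ should be routine, with constants depending only on $\X$.
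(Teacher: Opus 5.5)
Your proposal is correct and follows essentially the paper's route. The paper likewise splits $\partial\X$ into a ball of radius $\sim e^{-t}$ and annuli around $\xi$ (Lemma \ref{maximal_fn_lem} and Corollary \ref{cor1}); the part beyond a fixed radius $\varepsilon$ gives the term $e^{-2\beta t}\varepsilon^{-(h+2\beta)}|\mu|(\partial\X)$, which does the job of your large-$k$ bookkeeping. It then extracts balls of arbitrarily small radius with $|\mu|(\B(\xi,r))\gtrsim L\,r^{h-\alpha}$ and concludes by the Vitali covering lemma and countable stability. The geometric step you flag as the main obstacle is actually immediate: $B_\eta(x)=d(o,x)-2(x|\eta)_o$ holds exactly, and $(\gamma_\xi(t)|\eta)_o\le\min\{t,(\xi|\eta)_o\}+2\delta$ follows from a single application of (\ref{four_pt_condition'}).
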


We are now naturally led to ask whether the size estimates obtained in Theorem \ref{exceptional_thm} are sharp. More precisely,
\begin{itemize}
\item[$(\mathcal{Q}3)$] {\it Given $\beta>0,\:\alpha \in [0,h)$ and any $E \subset \partial \X$ with $\mathcal{H}^{h-\alpha}(E)=0$, does there exist a Radon measure $\mu$ on $\partial \X$, such that $E \subset E^\infty_\alpha(\PP_{i\beta}[\mu])$\:?}
\item[$(\mathcal{Q}4)$] {\it Given $\beta>0,\:\alpha \in [0,h)$, does there exist a Radon measure $\mu$ on $\partial \X$, such that} $$dim_{\mathcal{H}}\left(E_\alpha\left(\PP_{i\beta}[\mu]\right)\right) = h-\alpha\:?$$ 
\end{itemize}

Our next result provides affirmative answers to both the problems $(\mathcal{Q}3)$ and $(\mathcal{Q}4)$:
\begin{theorem} \label{exceptional_sharp_thm}
Let $\X,\:h,\:\rho$ be as in Theorem \ref{exceptional_thm} and  $\beta>0\:,\alpha \in [0,h)$.
\begin{itemize}
\item[(i)] Given any $E \subset \partial \X$ with $\mathcal{H}^{h-\alpha}(E)=0$, there exists a Radon measure $\mu$ on $\partial \X$, such that $E \subset E^\infty_\alpha(\PP_{i\beta}[\mu])\:.$
\item[(ii)] There exists a Radon measure $\mu$ on $\partial \X$, such that $dim_{\mathcal{H}}\left(E_\alpha\left(\PP_{i\beta}[\mu]\right)\right) = h-\alpha\:.$
\end{itemize}
\end{theorem}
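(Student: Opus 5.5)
Both parts will be proved by building $\mu$ as a countable sum of point-mass-free pieces supported on visual balls, using the lower bound of the generalized Poisson kernel along radial geodesics. The basic estimate needed first is the following. If $\eta\in\B(\xi,r)$ and $t$ is comparable to $\log(1/r)$, say $e^{-t}\asymp r$, then $\gamma_\xi(t)$ lies in a bounded-aperture cone at $\eta$. Gromov hyperbolicity of $\X$ (which follows from purely exponential volume growth together with nonpositive curvature, as used in \cite{BKP}) gives $B_\eta(\gamma_\xi(t))\le -t+C$. Hence
\begin{equation*}
e^{-(\beta-\rho)t}P_{i\beta}(\gamma_\xi(t),\eta)\ge c\,e^{-(\beta-\rho)t}e^{(\beta+\rho)t}=c\,e^{ht}.
\end{equation*}
Therefore, for a positive measure $\nu_0$ and every $\xi$ with $\xi\in\B(\zeta,r)$ (after enlarging the ball by a quasi-metric constant),
\begin{equation*}
e^{-(\alpha+\beta-\rho)t}\PP_{i\beta}[\nu_0](\gamma_\xi(t))\ge c\,e^{(h-\alpha)t}\,\nu_0(\B(\zeta,r))\asymp r^{-(h-\alpha)}\nu_0(\B(\zeta,r)).
\end{equation*}
Positivity of all pieces lets us drop the other terms, since we work with positive Radon measures.

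For (i), take $E$ with $\mathcal{H}^{h-\alpha}(E)=0$. For each $k\in\N$, choose a countable cover of $E$ by visual balls $\B(\zeta_{k,j},r_{k,j})$ with $r_{k,j}<2^{-k}$ and $\sum_j r_{k,j}^{h-\alpha}<4^{-k}$. Set
\begin{equation*}
\mu=\sum_{k}\sum_j 2^{k} r_{k,j}^{h-\alpha}\,\frac{\lambda_o|_{\B(\zeta_{k,j},r_{k,j})}}{\lambda_o(\B(\zeta_{k,j},r_{k,j}))},
\end{equation*}
which has total mass at most $\sum_k 2^{-k}$, or simply use Dirac masses $2^k r_{k,j}^{h-\alpha}\delta_{\zeta_{k,j}}$ to avoid lower bounds on $\lambda_o$ of balls. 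Fix $\xi\in E$. For each $k$ pick $j$ with $\xi\in\B(\zeta_{k,j},r_{k,j})$ and $t_k=\log(1/r_{k,j})\to\infty$. The estimate gives a value $\gtrsim 2^k$ at time $t_k$, so $\xi\in E^\infty_\alpha$. The Dirac version is the cleanest: the point $\zeta$ satisfies $(\gamma_\xi(t)|\zeta)_o\ge t-C$ when $\nu(\xi,\zeta)\lesssim e^{-t}$, which is exactly what is needed.

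For (ii), by Theorem \ref{exceptional_thm} it suffices to produce $\mu$ with $dim_{\mathcal{H}}E_\alpha\ge h-\alpha$. I would take a sequence $s_m\uparrow h-\alpha$ and, for each $m$, a compact set $K_m\subset\partial\X$ with $0<\mathcal{H}^{s_m}(K_m)$, or better with $dim_{\mathcal H}K_m\ge s_m$. Since $\mathcal{H}^{h-\alpha}(K_m)=0$, part (i) gives $\mu_m$ with $K_m\subset E^\infty_\alpha(\PP_{i\beta}[\mu_m])$. Then $\mu=\sum_m 2^{-m}\mu_m/\|\mu_m\|$ is positive, and by positivity $E_\alpha(\PP_{i\beta}[\mu])\supset\bigcup_m K_m$, whose dimension is at least $\sup_m s_m=h-\alpha$.

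The main obstacle is the existence of such $K_m$: we need subsets of $\partial\X$ of Hausdorff dimension arbitrarily close to $h-\alpha$ (for $\alpha=0$, of dimension $h$) with respect to $\nu$, while only the upper bound $\lambda_o(\B)\le Cr^h$ is available. For $\alpha=0$ this upper bound suffices via the mass distribution principle: $\lambda_o$ itself gives $\mathcal{H}^h(\partial\X)>0$. For general $s<h$, I would build a Cantor-type subset by the mass distribution principle. This uses a separated-set count: by volume growth there are $\asymp e^{hT}$ points of $\partial\X$ that are $e^{-T}$-separated in $\nu$, namely shadows of a maximal separated net on the sphere $S(o,T)$. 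Iterating with $\approx e^{sT}$ children per generation, using shadow nesting from Gromov hyperbolicity, gives a measure $\sigma$ with $\sigma(\B(\xi,r))\lesssim r^{s}$. Hence $dim_{\mathcal{H}}\ge s$.
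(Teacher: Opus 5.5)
\textbf{Part (i).} Your argument is correct, and it is more elementary than the paper's. The paper uses the $L^1$ density $f=\sum_{j,m} m\,\mathfrak{d}(\B^{(m,j)})^{-\alpha}\chi_{2e^{2\delta}\B^{(m,j)}}$. It needs (\ref{upper_bound_visual_measure}) to see that $f\in L^1$, and it gets the lower bound at $\gamma_\xi(t_m)$ from Corollary \ref{cor2}, i.e.\ from the normalization (\ref{approx_identity}) combined with the truncated maximal function estimate of Lemma \ref{maximal_fn_lem}.

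Your atomic measure uses only (\ref{four_pt_condition'}). From $\nu(\xi,\zeta)<e^{-t}$ you get $(\gamma_\xi(t)|\zeta)_o\ge t-2\delta$, hence $B_\zeta(\gamma_\xi(t))\le -t+4\delta$. So the single atom at $\zeta_{k,j}$ already forces the weighted value at $t_k=\log(1/r_{k,j})$ to be $\gtrsim 2^k$. Since $\mathcal{H}^{h-\alpha}$ is defined through diameters, note that a $\nu$-ball of diameter $d$ lies in the ball with the same centre and radius $2d$; this justifies your covers by radii. The paper's construction produces $L^1$ boundary data, and your normalized-$\lambda_o$ variant does too.

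\textbf{Part (ii): comparison and what is missing.} Your reduction is the paper's: countably many compact sets of dimension increasing to $h-\alpha$, each $\mathcal{H}^{h-\alpha}$-null, fed into (i) and Theorem \ref{exceptional_thm}. The routes differ only in how these sets are produced. The paper passes to a genuine metric $\nu_s$ bi-Lipschitz to $\nu^s$. It gets $\dim_{\mathcal{H}}\partial\X=h/s$ for $\nu_s$ from (\ref{upper_bound_visual_measure}), then quotes Howroyd's theorem (Lemma \ref{existence}), which needs a metric space, to extract compact sets of exact dimension. Your hand-built Cantor sets in the quasi-metric $\nu$ avoid both external results, but the sketch has three holes.
\begin{enumerate}
\item For $\alpha=0$, $\partial\X$ is not $\mathcal{H}^h$-null, so (i) cannot be applied to it. Either take $\mu=\lambda_o$ as the paper does, so that $E_0=\partial\X$ by (\ref{approx_identity}), or use Cantor sets with $s_m\uparrow h$.
\item ``$\mathcal{H}^{h-\alpha}(K_m)=0$'' does not follow from $\dim_{\mathcal{H}}K_m\ge s_m$. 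It must be read off the construction: generation $N$ is covered by about $e^{s_mNT}$ balls of radius $\lesssim e^{-NT}$.
\item This is the serious one. The separated-set count you cite is global, but the iteration needs, uniformly over all parents and generations, many well-separated children inside each parent ball. That is a local lower bound, exactly the kind you said was unavailable.
\end{enumerate}

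\textbf{How to fill hole 3.} The local lower bound can be obtained by recentring. Fix $x\in\X$ and apply (\ref{four_pt_condition'}) with base point $x$. This places $\{\xi:(o|\xi)_x>C\}$ inside a single ball of radius $e^{2\delta-C}$ for the quasi-metric $e^{-(\cdot|\cdot)_x}$. By (\ref{upper_bound_visual_measure}) at base point $x$, whose constant depends only on $h$ and $\delta$, this set has $\lambda_x$-measure at most $1/2$ once $C$ is large.

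Now take $x=\gamma_\eta(T)$. Every $\xi$ in the complement satisfies $(x|\xi)_o=T-(o|\xi)_x\ge T-C$, hence lies in $\overline{\B}(\eta,e^{C+2\delta}e^{-T})$. Since $d\lambda_o=e^{hB_\xi(x)}d\lambda_x\ge e^{-hT}d\lambda_x$, you get $\lambda_o(\overline{\B}(\eta,r))\gtrsim r^h$ for small $r$.

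With this two-sided bound the Cantor construction is routine. In each parent ball of radius $r_N$, choose about $\epsilon^{-s}$ disjoint balls of radius $\epsilon r_N$ centred in a smaller concentric ball; this is possible for small $\epsilon$ because $s<h$. The mass distribution principle then applies verbatim to ball covers in $\nu$. With these additions your proof of (ii) is complete and more self-contained than the paper's.
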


\begin{remark} \label{exceptional_thms_rmk}
 Theorems \ref{exceptional_thm} and \ref{exceptional_sharp_thm} on boundary exceptional sets for eigenfunctions, are new even for the simplest case of the real hyperbolic space.
\end{remark}

The fractal structure of the boundary at infinity is a classic point of interest in geometry. For Hadamard manifolds of pinched negative curvature, the Hausdorff dimension of harmonic measures on $\partial \X$ has been studied by Kifer-Ledrappier \cite{Kifer}, Benoist-Hulin \cite{BeHu}. Recently, Cavallucci \cite{Cava} proved that for Hadamard manifolds of pinched non-positive sectional curvature, the (modified) Minkowski dimension of $\partial \X$, the Lipschitz-topological entropy of the geodesic flow, the logarithmic volume entropy and the covering entropy of $\X$, all coincide.

The proof of Theorem \ref{exceptional_sharp_thm} also provides a rigidity result in the similar vein, demonstrating the influence of eigenfunctions on the geometry at infinity:
\begin{corollary} \label{rigidity_cor}
Let $\X$ be a non-positively curved harmonic manifold of purely exponential volume growth. Then the Hausdorff dimension of $\partial \X$ coincides with the mean curvature of horospheres and the logarithmic volume entropy. 
\end{corollary}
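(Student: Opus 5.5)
The corollary amounts to two inequalities for the visual quasi-metric $\nu$: an upper bound $dim_{\mathcal{H}}\partial \X \le h$ and a lower bound $dim_{\mathcal{H}}\partial \X \ge h$. The identification of $h$ with the logarithmic volume entropy is already recorded in the introduction, since for these manifolds the mean curvature of horospheres equals the entropy in (\ref{volume_growth}).

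\emph{Upper bound.} We apply Theorem \ref{exceptional_thm} with $\alpha=0$ and a suitable measure. Take $\mu=\lambda_o$ and any $\beta>0$. By the normalization (\ref{approx_identity}), $e^{-(\beta-\rho)t}\PP_{i\beta}[\lambda_o](\gamma_\xi(t)) = 1$ for every $\xi$ and $t$, so the $\limsup$ in (\ref{E_alpha}) with $\alpha=0$ equals $1>0$. Hence $E_0(\PP_{i\beta}[\lambda_o])=\partial \X$, and Theorem \ref{exceptional_thm} gives $dim_{\mathcal{H}}\partial \X \le h$. A direct alternative is available. Using $\lambda_o(\B(\xi,r))\le Cr^h$ from Remark \ref{boundary_doubling_remark} together with a comparable lower bound coming from shadows, cover $\partial \X$ by $\lesssim r^{-h}$ visual balls of radius $r$. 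This gives Minkowski dimension, and hence Hausdorff dimension, at most $h$. I would use the first route, since it is immediate from the theorem.

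\emph{Lower bound.} This is what the proof of Theorem \ref{exceptional_sharp_thm}(ii) should deliver. For each $\alpha\in(0,h)$ it produces a Radon measure $\mu$ with $dim_{\mathcal{H}} E_\alpha(\PP_{i\beta}[\mu]) = h-\alpha$. Since $E_\alpha(\PP_{i\beta}[\mu])\subset\partial \X$, monotonicity of Hausdorff dimension gives $dim_{\mathcal{H}}\partial\X \ge h-\alpha$. Letting $\alpha\to 0^+$ (or applying the statement directly at $\alpha=0$, which Theorem \ref{exceptional_sharp_thm} allows) yields $dim_{\mathcal{H}}\partial\X\ge h$.

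\emph{Main obstacle.} The subtle point is that the lower bound must be genuinely contained in the proof of Theorem \ref{exceptional_sharp_thm}(ii), not assumed from it. The statement of (ii) already implies $dim_{\mathcal{H}}\partial \X\ge h-\alpha$ trivially, so the corollary follows formally once (ii) is proved. One should check that the proof of (ii) does not itself presuppose $dim_{\mathcal{H}}\partial\X = h$. If it does, an independent argument is needed. I would first try a mass distribution (Frostman) argument with $\lambda_o$. Since $\lambda_o(\B(\xi,r))\le Cr^h$ and $\lambda_o(\partial\X)>0$, any cover $\{\B(\xi_j,r_j)\}$ of $\partial\X$ satisfies $\sum r_j^h \ge C^{-1}\lambda_o(\partial\X)$. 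This gives $\mathcal{H}^h(\partial\X)>0$, and hence $dim_{\mathcal{H}}\partial\X\ge h$. The quasi-metric nature of $\nu$ only affects constants: a set of $\nu$-diameter $r$ is contained in a visual ball of radius comparable to $r$.
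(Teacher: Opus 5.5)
Your upper bound is exactly the paper's argument: $E_0(\PP_{i\beta}[\lambda_o])=\partial\X$ by (\ref{approx_identity}), followed by Theorem \ref{exceptional_thm}.

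\textbf{The lower bound via Theorem \ref{exceptional_sharp_thm}(ii) is circular.} Your suspicion is correct, so drop that route, including the variant of applying (ii) directly at $\alpha=0$. The paper proves the $\alpha=0$ case of (ii) precisely by establishing this corollary. The case $\alpha\in(0,h)$ invokes Lemma \ref{existence}, and this requires $\mathcal{H}^t(\partial\X)=\infty$ with respect to $\nu_s$ for $t<h/s$. That is, it takes $dim_{\mathcal{H}}\partial\X\ge h$ as input.

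\textbf{Your fallback is the actual proof, and it takes a different route from the paper.} Both rest on the mass distribution idea applied to $\lambda_o$.
\begin{itemize}
\item The paper's route: it passes to a genuine metric $\nu_s$, bi-Lipschitz to $\nu^s$ for $s$ below the critical exponent given by the asymptotic upper curvature bound. It deduces $\lambda_o(\overline{\B_{\nu_s}(\xi,r)})\lesssim r^{h/s}$ and gets dimension $\ge h/s$ in the metric space $(\partial\X,\nu_s)$. It then returns to $\nu$ by power scaling and bi-Lipschitz invariance.
\item Your route: you work directly with the quasi-metric $\nu$. This is legitimate because Hausdorff measure is defined through diameters of the covering sets. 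A set of $\nu$-diameter $d$ with $2d<1$ lies in $\B(\eta,2d)$ for any $\eta$ in it, so (\ref{upper_bound_visual_measure}) gives it $\lambda_o$-measure $\lesssim d^h$. Hence every $\varepsilon$-cover satisfies $\sum_i \mathfrak{d}(B_i)^h\gtrsim \lambda_o(\partial\X)=1$. For $t<h$ this gives $\mathcal{H}^t_\varepsilon(\partial\X)\ge\varepsilon^{t-h}\mathcal{H}^h_\varepsilon(\partial\X)\to\infty$.
\item Comparison: your version is more elementary and needs no visual metrics at all. For the paper the detour costs nothing extra, since it needs $(\partial\X,\nu_s)$ anyway to apply Lemma \ref{existence}, which requires a complete separable metric space, and Frostman's lemma (Lemma \ref{frostmann_lemma}) in Section \ref{sec6}.
\end{itemize}

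\textbf{Do not rely on your ``direct alternative'' for the upper bound.} As stated, it needs a uniform lower bound $\lambda_o(\B(\xi,r))\gtrsim r^h$. The paper does not have this (Remark \ref{boundary_doubling_remark}). Such a bound would immediately give $h$-doubling, which Section \ref{sec9} lists as open.
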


In the second part of the paper, we will study the boundary behaviour of positive superharmonic functions. In contrast to the extensive literature on harmonic functions, the boundary behaviour of positive superharmonic functions on non-positively curved Hadamard manifolds, has been studied only in a handful of articles, mostly addressing the radial limits (see \cite{LMT, My_exceptional}). This brings us to the natural question:
\begin{itemize}
\item[($\mathcal{Q}5$)] {\it Like harmonic functions, do positive superharmonic functions also have non-tangential limits a.e. on the boundary at infinity?} 
\end{itemize}

Our next result answers the problem ($\mathcal{Q}5$) in the negative:
\begin{theorem} \label{first_counter-example}
Let $\X$ be a non-positively curved harmonic manifold of purely exponential volume growth. Then given $\alpha>1$, there exists a positive superharmonic function $f$ on $\X$ such that
\begin{equation*}
\limsup_{\substack{x \to \xi \\ x \in \Gamma_\alpha(\xi)}} f(x) = +\infty\:,\:\: \forall \xi \in \partial \X\:.
\end{equation*} 
\end{theorem}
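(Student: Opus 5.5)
We will build $f$ as a Green potential $f=G[\mu]:=\int_{\X} G(\cdot,y)\,d\mu(y)$ of a positive measure $\mu$ that is a countable sum of weighted point masses. Such an $f$ is positive and superharmonic as soon as it is finite at one point, because it is an increasing limit of finite sums of Green functions. Two facts about the Green function $G$ of $\X$ will be used. First, $G$ is radial: $G(x,y)=\mathcal{G}(d(x,y))$. Second, on a harmonic manifold of purely exponential volume growth and dimension $n\ge3$, one has $\mathcal{G}(r)\asymp e^{-hr}$ for $r\ge1$ and $\mathcal{G}(r)\asymp r^{2-n}$ for $r\le1$. In particular, $\mathcal{G}(r)\to+\infty$ as $r\to0$. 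Near each pole the function then blows up, and the task is to arrange the poles so that every cone $\Gamma_\alpha(\xi)$ contains poles arbitrarily close to $\xi$. A single pole $y$ inside the cone is already enough to make the $\limsup$ infinite along points approaching $y$, provided there are infinitely many such poles escaping to $\xi$.

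\textbf{Choice of poles.} For each $k\in\N$, take a maximal $\epsilon$-separated set $\{\xi_{k,j}\}_{j}$ in $(\partial\X,\nu)$ at scale $e^{-k}$, and put $y_{k,j}=\gamma_{\xi_{k,j}}(k)$. The number $N_k$ of these points satisfies $N_k\lesssim e^{hk}$. To see this, use the volume growth (\ref{volume_growth}) together with the fact that the points $y_{k,j}$ are uniformly separated on the sphere $S(o,k)$. Alternatively, use the lower bound on $\lambda_o$ of shadows, $\lambda_o(\text{shadow of }B(y,1))\gtrsim e^{-hk}$, which holds on harmonic manifolds and follows from the Busemann/Poisson kernel estimates.

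\textbf{Covering step.} Next we check that for every $\xi$ and every $k$ there is a $j$ with $y_{k,j}\in\Gamma_\alpha(\xi)$. By Gromov hyperbolicity of $\X$ (which holds in this setting, as in \cite{KL}), the Gromov product satisfies $(y_{k,j}|\xi)_o\ge\min\{(y_{k,j}|\xi_{k,j})_o,(\xi_{k,j}|\xi)_o\}-\delta$. Here $(y_{k,j}|\xi_{k,j})_o=k$, and $(\xi_{k,j}|\xi)_o\ge k-c$ if the net constant is chosen appropriately. Hence $e^{-(y_{k,j}|\xi)_o}\le C_0e^{-k}=C_0e^{-d(o,y_{k,j})}$. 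The constant $C_0$ depends only on $\delta$ and the net constant. For a given $\alpha>1$, we shrink the net scale and shift the radii (taking $y_{k,j}=\gamma_{\xi_{k,j}}(k+s)$ with a net of scale $e^{-k-s'}$) so that $C_0<\alpha$. This is the step I expect to need the most care. The cone only allows a multiplicative slack $\alpha$, which may be close to $1$, so the hyperbolicity error $e^{\delta}$ has to be absorbed. I would try to do this by letting the net be much finer than $e^{-k}$, at the cost of $N_k\lesssim e^{h(k+s')}$, which is harmless. Then we use the exact identity $(\gamma_\eta(t)|\eta)_o=t$ together with the quasi-ultrametric inequality, applied with a fine enough net.

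\textbf{Finiteness and conclusion.} Put $\mu=\sum_k\sum_j c_k\,\delta_{y_{k,j}}$ with $c_k=e^{-hk}k^{-2}$. At a point $x_0$ chosen away from all poles, for instance a point at distance at least $1/2$ from every $y_{k,j}$, the bound $\mathcal{G}(d(x_0,y_{k,j}))\lesssim 1$ gives $f(x_0)\lesssim\sum_k N_kc_k\lesssim\sum_k k^{-2}<\infty$. So $f$ is a positive superharmonic function that is not identically $+\infty$. Since $\mathcal{G}(0^+)=+\infty$ and $c_k>0$, we have $f(y_{k,j})=+\infty$, and $f(x)\to+\infty$ as $x\to y_{k,j}$. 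Fix $\xi$, and for each $k$ take the index $j_k$ from the covering step. The cone $\Gamma_\alpha(\xi)$ is open (its defining inequality is strict), so it contains a small neighbourhood of $y_{k,j_k}$. In that neighbourhood we can choose $x_k\neq y_{k,j_k}$ with $f(x_k)>k$. Because $d(o,x_k)\to\infty$ and $x_k\in\Gamma_\alpha(\xi)$, we have $x_k\to\xi$. Therefore the $\limsup$ over the cone is $+\infty$ at every $\xi$. If one prefers a finite-valued $f$, the point masses can be replaced by normalized volume on the balls $B(y_{k,j},r_k)$ with $r_k\to0$ fast. Since $\mathcal{G}(r)\asymp r^{2-n}$ near $0$, the value of $f$ at $y_{k,j}$ is then of order $c_kr_k^{2-n}$, and choosing $r_k$ small enough makes it exceed $k$.
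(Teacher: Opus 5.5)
The overall design (a Green potential of weighted Dirac masses on spheres $S(o,k)$, finiteness checked away from the poles, openness of $\Gamma_\alpha(\xi)$, and blow-up of $G$ at its poles) is the same as the paper's and is fine. The gap is the covering step you flagged, and your proposed remedy cannot close it.

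Take a net point $\eta$ and put $y=\gamma_\eta(t)$. The four-point inequality gives $(y|\xi)_o\ge\min\{(y|\eta)_o,(\eta|\xi)_o\}-2\delta$. Since $(y|\eta)_o=t=d(o,y)$ caps this minimum, the best you can ever conclude is $(y|\xi)_o\ge d(o,y)-2\delta$, that is, $e^{-(y|\xi)_o}\le e^{2\delta}e^{-d(o,y)}$.

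Refining the net only enlarges $(\eta|\xi)_o$, which stops helping once it exceeds $t$. Moving the poles to radius $k+s$ changes nothing either, because the cone condition $(y|\xi)_o>d(o,y)-\log\alpha$ compares $(y|\xi)_o$ with $d(o,y)$ itself. So this argument places a pole in $\Gamma_\alpha(\xi)$ only when $\alpha>e^{2\delta}$. The theorem is claimed for every $\alpha>1$, and small apertures are the hard case, since the cones increase with $\alpha$.

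What you need is an estimate with no additive hyperbolicity loss. The paper gets it from the lower curvature bound $K_\X\ge-b^2$, which every harmonic manifold satisfies. Alexandrov's angle comparison then yields $e^{-2b(y|\xi)_o}\le e^{-2bd(o,y)}+\sin^2(\theta/2)$, where $\theta$ is the angle at $o$ between $y$ and $\xi$. Consequently, a maximal net on $S(o,t_k)$ in the angle metric with mesh $r_k=(\alpha^{2b}-1)^{1/2}e^{-bt_k}$ contains, for every $\xi$, a point of $\Gamma_\alpha(\xi)$.

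Within your own setup a soft compactness argument also works. For fixed $y$, the map $\xi\mapsto(y|\xi)_o=\frac12(d(o,y)-B_\xi(y))$ is continuous on $\partial\X$, and it equals $t$ at $\xi=\eta$ when $y=\gamma_\eta(t)$. Hence the open sets $\{\xi:\gamma_\eta(t)\in\Gamma_\alpha(\xi)\}$, $\eta\in\partial\X$, cover the compact space $\partial\X$, and finitely many of them already do.

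Either way you get finitely many poles per level without any counting. As in the paper, you can then give each pole its own weight $\tau_j$ with $\sum_j\tau_je^{-hd(o,x_j)}<\infty$. So the bound $N_k\lesssim e^{hk}$ can be dropped, which is just as well, since its separation argument would itself need more than the four-point inequality.
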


The interesting negative result given by Theorem \ref{first_counter-example} prompts us to seek sufficient conditions to ensure a positive answer to the problem ($\mathcal{Q}5$). The key to this analysis, is the Riesz decomposition \cite[Theorem 1.3]{My_exceptional}: given a positive superharmonic function $f$ on $\X$, there exists a non-negative harmonic function $F_f$ (the greatest harmonic minorant of $f$) and $G[\mu_f]$, the Green potential of $\mu_f$ (the Riesz measure of $f$), such that
\begin{equation*}
f=F_f\:+\:G[\mu_f]\:\:\text{ on } \X\:.
\end{equation*}
Upon a brief glance at the construction in Theorem \ref{first_counter-example}, we realize that the underlying Riesz measure is singular with respect to the volume measure on $\X$. This suggests the following refinement of ($\mathcal{Q}5$):
\begin{itemize}
\item[($\mathcal{Q}6$)] {\it Do positive superharmonic functions with Riesz measures absolutely continuous with respect to the volume measure, have non-tangential limits a.e. on the boundary at infinity?} 
\end{itemize}
Our next result provides an affirmative answer to ($\mathcal{Q}6$) under certain weighted integrability conditions on the density:
\begin{theorem} \label{non-tangential_superharmonic_thm}
Let $\X,\:\partial \X,\:h$ be as in Theorem \ref{Fatou_thm}. Let $f$ be a positive superharmonic function on $\X$ such that the Riesz measure $\mu_f$ is absolutely continuous with respect to the volume measure, with density $\psi$ satisfying
\begin{equation} \label{density_condition}
\int_{\X} e^{-hd(o,x)}\:\psi(x)^p\:dvol(x) < \infty\:,\:\: \text{ for some } p>\frac{n}{2}\:.
\end{equation}
Then the non-tangential limit of $f$ exists at $\lambda_o$-a.e. point on $\partial \X$ and equals the Radon-Nikodym derivative $\frac{d\omega}{d\lambda_o}$, where $\omega$ is the absolutely continuous component of the boundary measure of the greatest harmonic minorant of $f$, with respect to $\lambda_o$.
\end{theorem}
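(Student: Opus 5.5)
Start from the Riesz decomposition \cite[Theorem 1.3]{My_exceptional}, $f = F_f + G[\mu_f]$, with $F_f \ge 0$ harmonic, so $F_f = \PP[\omega_f]$ for a positive Radon measure $\omega_f$ on $\partial \X$ by (\ref{martin_rep}). Since $\PP = \PP_{i\rho}$ up to the constant $C_\rho$ (take $\beta=\rho$ in (\ref{generalized_poisson_kernel}), where the weight $e^{-(\beta-\rho)d(o,\cdot)}$ is identically $1$), Theorem \ref{Fatou_thm} applied with $\beta=\rho$ shows that $F_f$ has non-tangential limit $\frac{d\omega}{d\lambda_o}$ at $\lambda_o$-a.e. point, where $\omega$ is the absolutely continuous part of $\omega_f$. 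The normalization has to be checked here; it is consistent with (\ref{approx_identity}). It then suffices to show that the Green potential $G\psi(x)=\int_\X G(x,y)\psi(y)\,dvol(y)$ has non-tangential limit $0$ at $\lambda_o$-a.e. $\xi$.

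To handle $G\psi$, fix $x$ and split $G\psi(x) = \int_{B(x,1)} + \int_{\X\setminus B(x,1)}$. In the local part, the Green function behaves like $d(x,y)^{2-n}$ near the diagonal (harmonic manifold, $n\ge3$), which lies in $L^{p'}(B(x,1))$ exactly when $p>n/2$. By Hölder,
\begin{equation*}
\int_{B(x,1)} G(x,y)\psi(y)\,dvol(y) \lesssim \Big(\int_{B(x,1)}\psi^p\,dvol\Big)^{1/p}.
\end{equation*}
For $x$ with $d(o,x)=t$, the weight $e^{-hd(o,y)}$ is comparable to $e^{-ht}$ on $B(x,1)$. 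Hence the local part is at most $e^{ht/p}\,\varepsilon(x)^{1/p}$, where $\varepsilon(x)=\int_{B(x,1)}e^{-hd(o,y)}\psi^p$. This bound still grows, so it is not enough by itself. The fix is a maximal-function and covering argument on the boundary. Define the measure $\sigma$ on $\partial\X$ as the push-forward, under the radial projection from $o$, of $e^{-hd(o,y)}\psi(y)^p\,dvol(y)$ restricted to shells; it is finite by (\ref{density_condition}). For $x\in\Gamma_\alpha(\xi)$ at distance $t$, the shadow of $B(x,1)$ is comparable to a visual ball $\B(\xi,Ce^{-t})$, whose $\lambda_o$-measure is about $e^{-ht}$ (this uses $h$-doubling together with the upper bound in Remark \ref{boundary_doubling_remark}). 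To get decay, use the finer quantity $\int_{B(x,1)}\psi^p$ and exploit that the tails $\sum_{k\ge t}\int_{\text{shell }k}$ tend to $0$. The claim to prove is that for $\lambda_o$-a.e. $\xi$,
\begin{equation*}
\sup_{x\in\Gamma_\alpha(\xi),\,d(o,x)\ge T}\int_{B(x,1)}\psi^p\,dvol \to 0 \quad (T\to\infty).
\end{equation*}
The set where this fails at level $\delta$ is covered by visual balls $\B(\xi_j, Ce^{-t_j})$ with $\int_{B(x_j,1)}\psi^p\ge\delta$. A Vitali covering, available by $h$-doubling, bounds its $\lambda_o$-measure by $\delta^{-1}\sum_j e^{-ht_j}\int_{B(x_j,1)}\psi^p$. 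Bounded overlap of the disjointified balls then gives a bound by $\delta^{-1}$ times the tail of (\ref{density_condition}), which tends to $0$.

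In the far part, $G(x,y)\lesssim e^{-hd(x,y)}$ for $d(x,y)\ge1$, since $G(r)\asymp e^{-hr}$ for harmonic manifolds of purely exponential growth. This makes $\int_{\X\setminus B(x,1)}G(x,y)\psi(y)\,dvol$ an approximate Poisson-type integral of the measure $\psi\,dvol$. First show that $\int e^{-hd(o,y)}\psi\,dvol<\infty$; by Jensen/Hölder against the finite measure $e^{-hd(o,y)}dvol$ this fails only logarithmically, so instead use that $G[\mu_f]$ is finite, which forces $\int G(o,y)\psi(y)\,dvol<\infty$. Next, split the far part into $y$ with $d(o,y)\ge T$ and $d(o,y)<T$. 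The piece $d(o,y)<T$ tends to $0$ as $x\to\xi$ because $G(x,\cdot)\to0$ uniformly on compacts. For the piece $d(o,y)\ge T$, estimate via Gromov products: for $x\in\Gamma_\alpha(\xi)$, $G(x,y)\lesssim G(o,y)\,P$-type factors, dominated by a non-tangential maximal function of a measure on $\partial\X$ of small total mass, namely the "balayage" of $\psi\,dvol|_{\{d(o,y)\ge T\}}$. The weak-type $(1,1)$ bound for the non-tangential maximal operator, obtained in the proof of Theorem \ref{Fatou_thm} under $h$-doubling, then makes the exceptional set small. I expect this step to be the main obstacle. It requires the comparison $G(x,y)\lesssim G(o,y)\,e^{h(\ldots)}$ uniformly over $x$ in cones and over $y$ far away, via the Gromov product estimate $d(x,y)\ge d(o,x)+d(o,y)-2(x|y)_o$. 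It also requires controlling the region where $y$ is near the geodesic towards $\xi$ but farther than $x$. I would try first to handle that region by a dyadic decomposition along $\gamma_\xi$ combined with the same Vitali covering argument used for the local part.
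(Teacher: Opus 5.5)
Your overall architecture matches the paper's. You use the Riesz decomposition, apply Theorem \ref{Fatou_thm} with $\beta=\rho$ (where $C_\rho=1$) to $F_f=\PP[\mu]$, and split $G[\psi]$ into $u_1=\int_{B(x,1)}$ and $u_2=\int_{\X\setminus B(x,1)}$, using H\"older with $p>n/2$ for $u_1$.

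Your Vitali-covering treatment of $u_1$ is a workable alternative to the paper's. Take radii $Ce^{-t_j}$ with $C$ large; then the four-point condition shows that disjointness of the visual balls forces the balls $B(x_j,1)$ to be pairwise disjoint. Only the upper bound (\ref{upper_bound_visual_measure}) is needed there, which is fortunate, because $h$-doubling does not give $\lambda_o(\B(\xi,r))\asymp r^h$. The paper's route is shorter and avoids doubling. Since $B(x,1)\subset\Gamma_{e^2\alpha}(\xi)\setminus B(o,R)$ for $x\in\Gamma_\alpha(\xi)\setminus B(o,R+1)$, it suffices that $\mu(\Gamma_{e^2\alpha}(\xi))<\infty$ for $\lambda_o$-a.e. $\xi$, with $d\mu=\psi^p\,dvol$. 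Fubini plus the shadow lemma then give $\int_{\partial\X}\mu(\Gamma_\alpha(\xi))\,d\lambda_o(\xi)\lesssim\int_\X e^{-hd(o,x)}\,d\mu(x)$ (Lemma \ref{mass_lemma}).

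The genuine gap is the non-polar part $u_2$. You leave it as the ``main obstacle'', with only an unspecified plan (a dyadic decomposition along $\gamma_\xi$ plus covering). The missing idea is a one-line pointwise domination that holds for \emph{all} $x,y$, with no cone restriction.

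For $y\neq o$, let $\eta_y\in\partial\X$ be the endpoint of the geodesic ray $\gamma$ from $o$ through $y$. By the triangle inequality, $s\mapsto d(x,\gamma(s))-s$ is non-increasing, equivalently $(x|\cdot)_o$ increases along $\gamma$. Hence $d(x,y)-d(o,y)\ge B_{\eta_y}(x)$, that is,
\[
e^{-hd(x,y)}\le e^{-hd(o,y)}\,P(x,\eta_y)\:.
\]
It follows that $\int_{\X\setminus B(o,T)}e^{-hd(x,y)}\psi(y)\,dvol(y)\le\PP[\omega_T](x)$. Here $\omega_T$ is the push-forward of $e^{-hd(o,y)}\psi(y)\,dvol(y)$, restricted to $\X\setminus B(o,T)$, under $y\mapsto\eta_y$. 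Its mass tends to $0$ as $T\to\infty$ by Lemma \ref{Green_potential_lemma} applied to $G[\psi]\le f$. Your side remark that $e^{-hd(o,y)}dvol$ is a finite measure is false, since $A(r)\asymp e^{hr}$. Also, the balayage must be of $e^{-hd(o,\cdot)}\psi\,dvol$, not of $\psi\,dvol$.

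Lemma \ref{maximal_estimate_lemma} (with $\beta=\rho$) and Lemma \ref{Sec3_lemma1} then give, for every $T$,
\[
\lambda_o\bigl\{\xi:\limsup_{x\to\xi,\,x\in\Gamma_\alpha(\xi)}u_2(x)>s\bigr\}\lesssim\omega_T(\partial\X)/s\:,
\]
which finishes the argument. The region you worried about, $y$ near $\gamma_\xi$ beyond $x$, needs no separate treatment. There the displayed inequality is essentially an equality, and that mass simply sits in $\omega_T$ near $\xi$, where $\M_{HL}\omega_T$ already controls it.
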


In fact, the integrability condition (\ref{density_condition}) in Theorem \ref{non-tangential_superharmonic_thm} is sharp:
\begin{theorem} \label{second_counter-example}
Let $\X$ be a non-positively curved harmonic manifold of purely exponential volume growth with mean curvature of horospheres $h \ge n/2$. Given $\alpha>1$, there exists a positive superharmonic function $f$ on $\X$ such that its Riesz measure $\mu_f$ is absolutely continuous with respect to the volume measure, with density $\psi$ satisfying
\begin{equation} \label{density_counter_condition}
\int_{\X} e^{-hd(o,x)}\:\psi(x)^p\:dvol(x) < \infty\:,\:\: \text{ for } p \le \frac{n}{2}\:,
\end{equation}
but
\begin{equation} \label{boundary_blowup}
\limsup_{\substack{x \to \xi \\ x \in \Gamma_\alpha(\xi)}} f(x) = +\infty\:,\:\: \forall \xi \in \partial \X\:.
\end{equation}
\end{theorem}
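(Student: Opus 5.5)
We construct $f$ as a Green potential $G[\psi\,dvol]$ of a sum of bump densities placed along a sequence of points $x_j \to \partial\X$ whose non-tangential cones exhaust the boundary. The local singularity of the Green function, $G(x,y)\asymp d(x,y)^{2-n}$ for $d(x,y)\le 1$, is what makes $p\le n/2$ the critical exponent. The two facts we use are:
\begin{itemize}
\item the Green function of $\X$ satisfies $G(x,y)\asymp d(x,y)^{2-n}$ near the diagonal;
\item $G(x,y)\asymp e^{-hd(x,y)}$ for $d(x,y)\ge 1$ (harmonic manifold with mean curvature of horospheres $h$).
\end{itemize}

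\textbf{Choosing the points.} Following the construction behind Theorem \ref{first_counter-example}, we choose a countable set $\{x_j\}\subset\X$ with $d(o,x_j)=t_j\to\infty$. We arrange that every $\xi\in\partial\X$ lies in infinitely many of the shadow sets $\{\xi : x_j\in\Gamma_\alpha(\xi)\}$. For example, at radius $t_k=k$ take a maximal $1$-separated net on the sphere $S(o,k)$. Such a net has about $e^{hk}$ points, and every boundary point $\xi$ has $\gamma_\xi(k)$ within distance $1$ of some net point, which then lies in $\Gamma_\alpha(\xi)$ for suitable $\alpha$. Any $\alpha>1$ can be handled by rescaling the net spacing.

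\textbf{Choosing the densities.} At each $x_j$ we place the density
\[
\psi_j(y)=a_j\, d(x_j,y)^{-2}\,\big(\log(1/d(x_j,y))\big)^{-\gamma}\,\mathbf 1_{B(x_j,r_j)}(y),
\qquad r_j\le 1/2 .
\]
This is the standard critical profile. Using $\psi_j^{\,n/2}\asymp d^{-n}\log^{-\gamma n/2}(1/d)$ and polar coordinates (volume element $\asymp s^{n-1}\,ds$ for small $s$), we get
\[
\int\psi_j^{\,p}\,dvol<\infty \quad\text{for } p\le n/2 \text{ and } \gamma n/2>1 .
\]
At the same time,
\[
G[\psi_j](x_j)\gtrsim a_j\int_0^{r_j} s^{2-n}\,s^{-2}\log^{-\gamma}(1/s)\,s^{n-1}\,ds = a_j\int_0^{r_j}\frac{ds}{s\log^{\gamma}(1/s)} .
\]
This diverges if $\gamma\le 1$. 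Both requirements hold simultaneously when $2/n<\gamma\le1$, which is possible since $n\ge3$. We then get $f(x_j)=+\infty$, or, after truncating at radius $\epsilon_j$, $f(x_j)\ge j$ while $f$ stays finite. To get a genuine $\limsup=+\infty$ at every $\xi$, it suffices that $\limsup_j G[\psi_j](x_j)=\infty$ along the points $x_j$ lying in $\Gamma_\alpha(\xi)$, because $f\ge G[\psi_j]$.

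\textbf{Global conditions.} Two global conditions must hold.
\begin{enumerate}
\item \emph{Finiteness of $f$, i.e.\ that $f$ is a superharmonic function.} We need $\sum_j\int G(o,y)\psi_j(y)\,dy<\infty$. Since $G(o,y)\asymp e^{-ht_j}$ on the $j$-th bump, and $\int\psi_j\lesssim a_j$, this requires $\sum_j a_j e^{-ht_j}<\infty$. With $e^{hk}$ bumps at level $k$, this becomes $\sum_k a_{(k)}<\infty$, where $a_{(k)}$ is the amplitude used at level $k$.
\item \emph{The weighted condition (\ref{density_counter_condition}).} The bumps are disjoint, so
\[
\int e^{-hd(o,x)}\psi^p\,dvol\asymp\sum_k e^{hk}e^{-hk}\,a_{(k)}^p\,c_{p,\gamma}=\sum_k a_{(k)}^p\,c_{p,\gamma} .
\]
This is finite if $a_{(k)}^p$ is summable.
\end{enumerate}
Taking $a_{(k)}=k^{-2}$ satisfies both conditions. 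The blow-up at $x_j$ is then supplied by the divergent logarithmic integral, not by the amplitude, and truncation radii $\epsilon_{(k)}$ chosen so that $a_{(k)}\int_{\epsilon_{(k)}}^{r}\frac{ds}{s\log^\gamma(1/s)}\ge k$ give the required growth.

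\textbf{Where the hypothesis on $h$ enters.} The condition $h\ge n/2$ presumably enters through a case where the weight $e^{-hd}$ must dominate the count of bumps against the local $L^p$ norms, for instance if one instead uses bounded densities $\psi=\sum_j c_j\mathbf 1_{B(x_j,r_j)}$ with radii $r_j=e^{-\kappa t_j}$. We would first try the logarithmic profile above, and fall back to the power-scaling version if the endpoint $p=n/2$ forces it.

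\textbf{Main obstacle.} The main obstacle is making the Green function estimates uniform: the near-diagonal bound $G\asymp d^{2-n}$ and the far bound $G\asymp e^{-hd}$ must hold uniformly in the base point. This follows from harmonicity, since $G$ is radial with the same density function at every point. We also need superharmonicity of the infinite sum and lower semicontinuity of $f$, which follow from monotone convergence once $f(o)<\infty$.
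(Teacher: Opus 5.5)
Your proposal is correct, but both of its ingredients differ from the paper's proof.

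\textbf{The point set.} The paper reuses the construction of Theorem \ref{first_counter-example}. It takes maximal nets on $S(o,t_k)$ for the angle metric $\theta_o$ at scale $(\alpha^{2b}-1)^{1/2}e^{-bt_k}$. Membership in $\Gamma_\alpha(\xi)$ then comes from Lemma \ref{angle_gromov_relation}, i.e.\ Alexandrov comparison together with the lower curvature bound $K_\X\ge -b^2$. You instead take metric $\epsilon$-nets on $S(o,k)$ and use the elementary bound $|(x|\xi)_o-(\gamma_\xi(k)|\xi)_o|\le \tfrac12 d(x,\gamma_\xi(k))$ for $x\in S(o,k)$, so $\epsilon<2\log\alpha$ suffices. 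This needs no curvature comparison. It also gives the count $\lesssim e^{hk}$ points per level, which your amplitude bookkeeping needs. The paper avoids any counting by choosing a separate weight $\tau_j$ for each point.

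\textbf{The densities.} The paper fixes $p\in[1,n/2]$ and obtains $\psi_j$ abstractly by duality. Since $G_{x_j}\notin L^{p'}(B(x_j,r_j))$, there is a $\psi_j$ with $\int\psi_j^p<\tau_j$ and $G[\psi_j](x_j)>a_j$. Your explicit profile $d^{-2}\log^{-\gamma}(1/d)$ with $2/n<\gamma\le 1$ shows why $n/2$ is critical. Indeed, $d^{-2}$ is simultaneously borderline for $\psi\in L^{n/2}$ and for $\int G(x_j,\cdot)\psi<\infty$, and the logarithmic factor separates the two. It also gives a single density that works for all $p\in(1/2,n/2]$ at once. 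With geometrically decaying amplitudes such as $a_{(k)}=e^{-k}$, it works for all $p\le n/2$.

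\textbf{The hypothesis $h\ge n/2$.} Your argument never uses it. In the paper it enters only through the normalization $e^{-d(o,x_j)}$ in condition (b). There it is used, via $h>1$ and $p\le h$, to convert (b) into the two weighted conditions. It is superfluous, because the $\tau_j$ may be taken arbitrarily small. So you can drop your speculative paragraph about where $h\ge n/2$ enters, and the proposed fallback to a power-scaling profile: the logarithmic profile already handles the endpoint $p=n/2$.

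\textbf{One correction.} With net spacing $\epsilon<1$, the bump radii must be $<\epsilon/2$, not merely $\le 1/2$, to keep the balls within a level disjoint.
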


\begin{remark} \label{remark_scaling}
The condition on the mean curvature of horospheres $h \ge n/2$ imposed in Theorem \ref{second_counter-example}, is not restrictive as this can always be achieved by a rescaling of the metric (see \cite[Lemma 5]{Satoh}).
\end{remark}

A natural question now is whether one can talk about boundary limits through wider approach regions. This brings us to the following notion of regions which have tangential contact of degree $\tau>1$ in all directions, at boundary points on $\partial \X$:
\begin{definition} \label{tangential_limit_defn}
\begin{itemize}
\item[(i)] For $\xi \in \partial \X$ and $\alpha >1,\tau>1$, {\it the tangential approach region of degree $\tau$ based at $\xi$ with aperture $\alpha$} is defined to be,  
\begin{equation*}
\Gamma_{\alpha,\tau}(\xi):= \left\{x \in \X \mid e^{-\tau\left(x |\xi\right)_o}< \alpha e^{-d(o,x)}\right\}\:.
\end{equation*}
\item[(ii)] A function $f$ on $\X$ is said to have tangential limit $L$ of degree $\tau>1$ at $\xi \in \partial \X$ if
\begin{equation*}
\displaystyle\lim_{\substack{x \to \xi \\ x \in \Gamma_{\alpha,\tau}(\xi)}} f(x)=L\:,\:\:\forall \alpha >1\:.
\end{equation*}
\end{itemize}
\end{definition}

It is classical in literature and goes back to Littlewood \cite{Littlewood} that harmonic functions may not have tangential limits a.e. on the boundary. In this regard, Nagel-Rudin-Shapiro obtained some positive results on the upper half space $\R^{n+1}_+$ for Poisson integrals of functions belonging to the exotic space of $K$-potentials on $\R^n$ \cite[Theorem 2.9]{Nagel}. This result was subsequently generalized to homogeneous spaces by Cifuentes-Dorronsoro-Sueiro in \cite{Cifuentes}.

Our next result addresses the existence of tangential boundary limits of positive superharmonic functions but does not follow the lines explored in \cite{Nagel,Cifuentes}. Instead, removing the $h$-doubling constraint of Theorem \ref{non-tangential_superharmonic_thm}, we obtain a positive result, with a precise control on the size of the exceptional sets, at the cost of some `extra decay':
\begin{theorem} \label{tangential_limit_superharmonic_thm}
Let $\X$ be a non-positively curved harmonic manifold of purely exponential volume growth with mean curvature of horospheres $h>0$. Let $f$ be a positive superharmonic function on $\X$ such that the Riesz measure $\mu_f$ is absolutely continuous with respect to the volume measure, with density $\psi$ satisfying
\begin{equation} \label{density_condition'}
\int_{\X} e^{-\beta d(o,x)}\:\psi(x)^p\:dvol(x) < \infty\:,\:\: \text{ for some } \beta \in (0,h),\:\:p>\frac{n}{2}\:.
\end{equation}
Also if for some $x_0 \in \X$, the geodesic spherical averages of $f$ asymptotically vanish:
\begin{equation} \label{spherical_average_decay}
\displaystyle\lim_{r \to +\infty} \int_{T^1_{x_0}X} f\left(\gamma_{x_0,v}(r)\right)\: d\theta_{x_0}(v) = 0 \:,
\end{equation}
then for each $\tau \in (1,h/\beta)$, there exists a set $E \subset \partial \X$ with $\mathcal{H}^{\beta \tau}(E)=0$ such that $f$ has tangential limit $0$ of degree $\tau$ at all $\xi \in \partial \X \setminus E\:.$ 
\end{theorem}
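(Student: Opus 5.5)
We begin with the Riesz decomposition $f=F_f+G[\mu_f]$, where $\mu_f=\psi\,dvol$. Both pieces are nonnegative, so it suffices to show that each tends to $0$ through $\Gamma_{\alpha,\tau}(\xi)$ for $\xi$ outside an exceptional set of vanishing $\mathcal{H}^{\beta\tau}$-measure.

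\textbf{The harmonic part.} The first step is to show that the spherical average decay (\ref{spherical_average_decay}) forces $F_f\equiv 0$. Spherical averages of the Green potential $G[\mu_f]$ around $x_0$ are nonincreasing in $r$ (superharmonicity), and the average of $F_f$ is constant, equal to $F_f(x_0)$, by the mean value property on harmonic manifolds. Since both summands are nonnegative, $F_f(x_0)\le \int f(\gamma_{x_0,v}(r))\,d\theta_{x_0}(v)\to 0$. Hence $F_f(x_0)=0$, and Harnack gives $F_f\equiv0$. So $f=G[\mu_f]$, and the problem reduces to the potential.

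\textbf{Splitting the potential.} For $x\in\X$, split
\[G[\mu_f](x)=\int_{B(x,1)}G(x,y)\psi(y)\,dvol(y)+\int_{\X\setminus B(x,1)}G(x,y)\psi(y)\,dvol(y)=:I_1(x)+I_2(x).\]

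\emph{Local term.} Near the diagonal, $G(x,y)\asymp d(x,y)^{2-n}$, which lies in $L^{p'}(B(x,1))$ precisely because $p>n/2$. Hölder's inequality then gives $I_1(x)\lesssim \|\psi\|_{L^p(B(x,1))}$. Writing $\|\psi\|^p_{L^p(B(x,1))}\le e^{\beta(d(o,x)+1)}\int_{B(x,1)}e^{-\beta d(o,y)}\psi^p$, we need to control the local quantities $a(x):=\int_{B(x,1)}e^{-\beta d(o,y)}\psi^p\,dvol$. We must show that $e^{\beta d(o,x)}a(x)\to0$ along $\Gamma_{\alpha,\tau}(\xi)$ for $\xi\notin E$.

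Cover $\X$ by a uniformly locally finite net $\{x_j\}$ of unit balls. Let $R_j=d(o,x_j)$, and let $S_j\subset\partial\X$ be the "shadow" of the points $x$ with $x_j\in\Gamma_{\alpha,\tau}(\xi)$. In Gromov product terms, $x\in\Gamma_{\alpha,\tau}(\xi)$ means $(x|\xi)_o> (d(o,x)-\log\alpha)/\tau$, so $S_j$ is contained in a visual ball of radius $\asymp e^{-R_j/\tau}$.

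For $\epsilon>0$, let $E_\epsilon$ be the set of $\xi$ lying in infinitely many $S_j$ with $e^{\beta R_j}a(x_j)>\epsilon$. Consider the bad indices $J_N=\{j: R_j>N,\ e^{\beta R_j}a(x_j)>\epsilon\}$. The balls $S_j$, $j\in J_N$, cover $E_\epsilon$, with radii $r_j\asymp e^{-R_j/\tau}$, so that $r_j^{\beta\tau}\asymp e^{-\beta R_j}<\epsilon^{-1}a(x_j)$. Summing,
\[\sum_{j\in J_N}r_j^{\beta\tau}\lesssim \epsilon^{-1}\int_{d(o,y)>N-1}e^{-\beta d(o,y)}\psi^p\,dvol\to0\]
as $N\to\infty$, using (\ref{density_condition'}) and bounded overlap. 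Thus $\mathcal{H}^{\beta\tau}(E_\epsilon)=0$, and we take $E=\bigcup_k E_{1/k}$. The choice $\tau<h/\beta$ ensures $\beta\tau<h$, so the statement is nontrivial; it may also be needed for the far term.

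\emph{Far term.} For $I_2$ we use the Green function asymptotics $G(x,y)\asymp e^{-hd(x,y)}$ for $d(x,y)\ge1$. We split $I_2$ further:
\begin{itemize}
\item Over $y$ with $d(o,y)\le M$, the contribution tends to $0$ as $x\to\xi$, since $G(x,y)\to0$ uniformly there.
\item Over the remaining $y$, we use Hölder with weights, $\psi\,e^{-\beta d(o,y)/p}\in L^p$ against $e^{-hd(x,y)}e^{\beta d(o,y)/p}\in L^{p'}$, and the volume growth. This bound is awkward because $\beta/p$ may be small and $e^{-hp'd(x,y)}$ is integrable.
\end{itemize}
A cleaner alternative is to handle $I_2$ by the same covering argument applied to a maximal-type function. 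In that approach we would aim to show that $G[\mathbf 1_{\{d(o,\cdot)>M\}}\mu_f]$ is small outside a set of small $\mathcal{H}^{\beta\tau}$-content, via a Frostman-type estimate, and then use that $G[\mathbf 1_{\{d(o,\cdot)\le M\}}\mu_f]$ vanishes at infinity.

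\textbf{Main obstacle.} The far-field estimate $I_2$ is the step I expect to be hardest: controlling contributions from $y$ far from both $o$ and $x$ uniformly along tangential regions, where the Gromov-hyperbolic comparison $d(x,y)\ge d(o,x)+d(o,y)-2(x|y)_o$ must be combined with the shadow geometry to obtain the $\mathcal{H}^{\beta\tau}$ content bound.
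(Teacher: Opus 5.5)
Your reduction to $f=G[\psi]$ is correct and essentially matches the paper: you use $F_f\le f$ and the mean value property, where the paper invokes (\ref{form_harmonic_minorant}). Your local term is also correct, and it takes a different, simpler route. The paper proves $\mathcal{H}^{\beta\tau}(\{\xi:\int_{\Gamma_{\alpha',\tau}(\xi)}\psi^p\,dvol=\infty\})=0$ through Frostman measures and Fubini (Corollary~\ref{Cor3}). You instead cover the bad set directly by the shadows $\tilde{\Gamma}_{\alpha',\tau}(x_j)\subset\B(\eta_{x_j},(\alpha'e^{-R_j})^{1/\tau})$. This works with the quasi-metric $\nu$ itself and needs neither Frostman's lemma nor visual metrics. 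You should enlarge the aperture, e.g.\ $\alpha'=e^{2\tau+2}\alpha$, so that net points within distance $2$ of $x\in\Gamma_{\alpha,\tau}(\xi)$ lie in $\Gamma_{\alpha',\tau}(\xi)$.

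The genuine gap is the far term $I_2$, which you leave open and where most of the content of the theorem lies. Your first sketch cannot work. The weighted H\"older inequality gives $I_2(x)\lesssim e^{\beta d(o,x)/p}\,\|e^{-\beta d(o,\cdot)/p}\psi\|_{L^p}$, and the kernel factor really is of size $e^{\beta d(o,x)/p}$: the unit ball around the point at distance $2$ beyond $x$ on the ray from $o$ already contributes that much. So the bound blows up as $x\to\xi$, however small the $L^p$ tail of $\psi$ is. Your second sketch, a Frostman-type smallness estimate for $G[\chi_{\{d(o,\cdot)>M\}}\psi]$ along tangential regions, restates what has to be proved rather than proving it.

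The missing idea is to keep the Gromov-product structure through H\"older.
\begin{itemize}
\item Write $e^{-hd(x,y)}=e^{-hd(o,x)}e^{-hd(o,y)}e^{2h(x|y)_o}$, so that $u_2\lesssim H_\psi$ as in (\ref{related_harmonic_fn}).
\item In H\"older, attach the weight $e^{(\beta_1/p)((x|y)_o-d(o,y))}$ to $\psi$ instead of a purely radial weight. The kernel side becomes $\int_{\X} e^{(2h-\beta_1/p)q(x|y)_o}e^{-(h-\beta_1/p)q\,d(o,y)}\,dvol(y)$.
\item Lemma~\ref{energy_estimates} bounds this by $e^{hq\,d(o,x)}$, provided $\beta_1<h$. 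Its proof uses polar coordinates, the four-point condition, and $\int_{\partial\X}e^{(h+t)(x|\xi)_o}\,d\lambda_o(\xi)=e^{\frac{h+t}{2}d(o,x)}\varphi_{it/2}(x)\asymp e^{t\,d(o,x)}$. This exactly cancels the prefactor $e^{-hd(o,x)}$.
\item One obtains $H_\psi(x)^p\lesssim C_R e^{-hp\,d(o,x)}+\sum_{i=1,2}\int e^{\beta_i(x|y)_o-\beta_i d(o,y)}\psi^p\,dvol$, with $i=1$ over $A_R\cap\Gamma_{\alpha,\tau}(\xi)$ and $i=2$ over $A_R\setminus\Gamma_{\alpha,\tau}(\xi)$.
\item For $x\in\Gamma_{\alpha,\tau}(\xi)$, the four-point condition gives $e^{(x|y)_o}\lesssim e^{\tau(y|\xi)_o}$, which removes $x$ entirely.
\item It remains to show that $\int_{\Gamma_{\alpha,\tau}(\xi)}e^{\tau\beta_1(y|\xi)_o-\beta_1 d(o,y)}\psi^p\,dvol$ and $\int_{\X\setminus\Gamma_{\alpha,\tau}(\xi)}e^{\tau\beta_2(y|\xi)_o-\beta_2 d(o,y)}\psi^p\,dvol$ are finite off an $\mathcal{H}^{\beta\tau}$-null set, for some $\beta_1<\beta<\beta_2<h$.
\item This is Lemma~\ref{prop3}. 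One integrates against $\omega$ with $\omega(\B(\eta,r))\lesssim r^{\beta\tau}$ and decomposes $\tilde{\Gamma}_{\alpha,\tau}(y)$ and its complement dyadically. The condition $\beta_1<\beta<\beta_2$ is exactly what makes $e^{-\beta_i d(o,y)}$ times the $\omega$-integral of $e^{\tau\beta_i(y|\xi)_o}$ over the corresponding region $\lesssim e^{-\beta d(o,y)}$, so (\ref{density_condition'}) applies.
\item Letting $R\to\infty$ then finishes the argument.
\end{itemize}
A covering by unit balls, as you use for $I_1$, only sees the mass of $\psi$ near $x$ and has no mechanism for these nonlocal contributions. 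As written, your argument therefore handles only the polar part.
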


We now briefly summarize the key ideas and novelties of this article:
\begin{itemize}
\item The proof of the Fatou theorem for eigenfunctions (Theorem \ref{Fatou_thm}) is standard. It relies on bounding the non-tangential maximal function corresponding to the weighted eigenfunctions, by the Hardy-Littlewood maximal function of the boundary measure on $\partial \X$ (Lemma \ref{maximal_estimate_lemma}), followed by the weak $L^1$ boundedness of the latter (Lemma \ref{Sec3_lemma1}).

\medskip

\item To study the exceptional sets for radial limits of eigenfunctions (Theorems \ref{exceptional_thm} and \ref{exceptional_sharp_thm}), we introduce a truncated maximal function on visual annuli in $\partial \X$ (see (\ref{maximal_fn})). While this approach builds on author's prior work \cite{My_exceptional} for harmonic functions, part (ii) of Theorem \ref{exceptional_sharp_thm}, however, is genuinely new, even for the harmonic case. Its construction crucially uses a non-trivial, abstract result on the existence of sets of prescribed Hausdorff dimensions in complete, separable metric spaces (Lemma \ref{existence}). But since, in general, $\nu$ is only a quasi-metric, we instead transition to visual metrics. We consider visual parameters compatible with the `asymptotic upper curvature bound of $\X$', and then leverage the power scaling and bi-Lipschitz invariance properties of Hausdorff measures. In the process, we also obtain the interesting rigidity result Corollary \ref{rigidity_cor}.

\medskip

\item At the core of studying the boundary behaviour of positive superharmonic functions, is the Riesz decomposition (Lemma \ref{riesz_decomposition_lemma}). Furthermore, by Theorem \ref{Fatou_thm}, examining the non-tangential boundary behaviour of these functions can be simplified by focusing entirely on Green potentials (Theorem \ref{non-tangential_green_thm}). This is then done by decomposing the Green potential into polar and non-polar parts:
\begin{itemize}
\item {\bf the non-polar part:} its non-tangential boundary behaviour (Lemma \ref{non-polar_lemma}) is governed by the Poisson integral of the underlying Radon measure projected onto $\partial \X$, coupled with Lemma \ref{Sec3_lemma1}.
\item {\bf the polar part:} this is controlled by the local integrability of the Green function near its pole and the $L^1(\partial \X)$-integrability of the mass operator on non-tangential cones (Lemma \ref{mass_lemma}).
\end{itemize}
Ultimately, this local integrability yields the $p >n/2$ weighted integrability threshold for the density.

\medskip

\item  While studying the tangential boundary behaviour of positive superharmonic functions in Theorem \ref{tangential_limit_superharmonic_thm}, the asymptotic decay of spherical averages given by (\ref{spherical_average_decay}) ensures that the function is purely a Green potential. Then to estimate the Hausdorff measures of the exceptional sets, we apply Frostman’s lemma (Lemma \ref{frostmann_lemma}) by again transitioning to visual metrics and leveraging the power scaling and bi-Lipschitz invariance of Hausdorff measures. The approach of decomposing into polar and non-polar parts yields two separate descriptions of the exceptional sets:
\begin{itemize}
\item {\bf the non-polar part:} the exceptional sets for tangential limits are identified as the blow-up sets of certain energies  (see Corollary \ref{Cor2}, Lemmata \ref{prop2} and \ref{prop3}). Remarkably, efficient estimates of these energies relate to $\varphi_\lambda$, the spherical functions (see Lemma \ref{energy_estimates}).  
\item {\bf polar part:} the exceptional sets in this case, turn out to be the collection of boundary points where the mass operator on tangential approach regions fails to be finite. By the geometry of these approach regions, this again boils down to the energy estimates mentioned above (see Corollary \ref{Cor3} and part (i) of Lemma \ref{prop3}).
\end{itemize}

\medskip

\item In the counter-examples (Theorems \ref{first_counter-example} and \ref{second_counter-example}), the heart of the matter is to construct a countable set $\{x_j\}_{j=1}^\infty$ in $\X$ which is sufficiently sparse, yet given any $\xi \in \partial \X$, there exists a subsequence $\{x_{j_k}\}_{k=1}^\infty \subset \Gamma_\alpha(\xi)$ converging to $ \xi$. We achieve this using the geometric fact that the sectional curvature of a harmonic manifold is always bounded below by some negative constant. Subsequently, we apply Alexandrov's angle comparison.
\end{itemize}

We conclude this commentary, by noting that Theorems \ref{exceptional_thm} and \ref{exceptional_sharp_thm} on boundary exceptional sets for eigenfunctions, are new even for the simplest case of the real hyperbolic spaces. On the other hand, the results of the second part: Theorems \ref{first_counter-example}, \ref{non-tangential_superharmonic_thm}, \ref{second_counter-example} and \ref{tangential_limit_superharmonic_thm}, were only known for the complex unit ball \cite{Cima, Stoll1, Stoll2} and hence are new even for the homogeneous cases of rank one Riemannian symmetric spaces of non-compact type and Damek-Ricci spaces. While the authors' prior approach for the complex unit ball relied on harmonic analysis of Lie groups, this method breaks down in our general non-homogeneous setting. Instead, we employ potential theory adapted to the intrinsic Gromov hyperbolic geometry of $\X$, a framework consistent with recent literature \cite{Petit, KP16, BKP, Brammen1, Brammen2, My_exceptional}.

\medskip

This article is organized as follows. In Section \ref{sec2}, we fix our notations, recall the required preliminaries on Gromov hyperbolic spaces, harmonic manifolds and Hausdorff measures. In Section \ref{sec3}, we prove Theorem \ref{Fatou_thm}. Theorems \ref{exceptional_thm} and \ref{exceptional_sharp_thm} are proved in Section \ref{sec4}. In Sections \ref{sec5} and \ref{sec6}, we study the non-tangential and tangetial boundary behaviour of Green potentials respectively and prove Theorems \ref{non-tangential_green_thm} and \ref{tangential_limit_green_thm}. In Section \ref{sec7}, we complete the proofs of Theorems \ref{non-tangential_superharmonic_thm} and \ref{tangential_limit_superharmonic_thm}. In Section \ref{sec8}, we construct the counter-examples of Theorems \ref{first_counter-example} and \ref{second_counter-example}. Finally, in Section \ref{sec9}, we conclude by making some remarks and posing some new problems.

\section{Preliminaries}
\label{sec2}
\subsection{Some notations:}
Throughout this article, $C$ will be used to denote positive constants whose value may vary at each occurence, with dependence on parameters or geometric quantities made explicit when necessary. When required, enumerated constants $C_1, C_2, \dots$ will be used to specify fixed constants. 

Let $f_1$ and $f_2$ be two positive functions. Then the notation $f_1 \asymp f_2$ will imply that there exists $C\ge 1$ such that $(1/C) f_1 \le f_2 \le C f_1$. Also $f_1 \gtrsim f_2$ (respectively, $f_1 \lesssim f_2$) will imply that there exists $C \ge 1$ such that $f_1 \ge C f_2$ (respectively, $f_1 \le C f_2$). The indicator function of a set $A$ will be denoted by $\chi_A$. Complex measures are assumed to have their real and imaginary parts to be Radon.

\subsection{Gromov Hyperbolic Spaces:}
In this subsection we recall briefly some  basic facts and definitions related to Gromov hyperbolic spaces. For more details, we refer to \cite{Bridson}.

A {\it geodesic} in a metric space $X$ is an isometric embedding $\gamma : I \subset \mathbb{R} \to X$ of an interval into $X$. A metric space $X$ is said to be a {\it geodesic metric space} if any two points in $X$ can be joined by a geodesic. For $x,y,z \in X$, the Gromov product of $y,z$ with respect to $x$ is defined by,
 \begin{equation*} 
 (y|z)_x := \frac{1}{2} \left(d(x,y)+d(x,z)-d(y,z)\right) \:.
 \end{equation*}
A geodesic metric space $X$ is called {\it Gromov hyperbolic} if there exists a $\delta \ge 0$ such that for every $x,y,z \in X$ and every basepoint $o \in X$, we have 
\begin{equation} \label{four_pt_condition}
(x|y)_o \ge \min\{(x|z)_o\:,\:(y|z)_o\} - \delta\:\:.
\end{equation}
This $\delta$ is called the {\it Gromov hyperbolicity constant}.

For a Gromov hyperbolic space $X$, its {\it boundary at infinity} $\partial X$ is defined to be the set of equivalence classes of geodesic rays in $X$. Here a geodesic ray is an isometric embedding $\gamma : [0,\infty) \to X$ of a closed half-line into $X$, and two geodesic rays $\gamma, \tilde{\gamma}$ are said to be equivalent if the set $\{ d(\gamma(t), \tilde{\gamma}(t)) \ | \ t \geq 0 \}$ is bounded. The equivalence class of a geodesic ray $\gamma$ is denoted by $\gamma(\infty) \in \partial X$.  

A metric space is said to be {\it proper} if closed and bounded balls in the space are compact. Let $X$ be a proper, geodesic, Gromov hyperbolic space. There is a natural topology on $\overline{X} := X \cup \partial X$, called the {\it cone topology} such that $\overline{X}$ is a compact metrizable space which is a compactification of $X$. In this case, for every geodesic ray $\gamma$,  $\gamma(t) \to \gamma(\infty) \in \partial X$ as $t \to \infty$, and for any $x \in X,\: \xi \in \partial X$ there exists a geodesic ray  $\gamma$ such that $\gamma(0) = x, \gamma(\infty) = \xi$.

If the space $X$ is a proper, $CAT(0)$, Gromov hyperbolic space then for any $x \in X$, the Gromov product ${(\cdot|\cdot)}_x$\:, extends continuously to $\partial X \times \partial X$ \cite[Proposition 5.15]{Biswas} and hence we define: 
\begin{equation} \label{Gromov_product_bdry}
 (\xi|\eta)_x := \displaystyle\lim_{\substack{y \to \xi \\ z \to \eta}} (y|z)_x \:.
 \end{equation}
We note that $(\xi|\eta)_x = +\infty$ if and only if $\xi = \eta \in \partial X$. We remark that our definition (\ref{Gromov_product_bdry}) may differ from the usual definition on general Gromov hyperbolic spaces where the limit above may not always exist and instead involves $\liminf$. Moreover, the above boundary continuity of the Gromov product results in the boundary continuity of the Busemann function defined in (\ref{busemann}).

The condition (\ref{four_pt_condition}) extends to $\overline{X}$ as follows,
\begin{equation} \label{four_pt_condition'}
(x|y)_o \ge \min\{(x|z)_o\:,\:(y|z)_o\} - 2\delta\:,\:x,y,z \in \overline{X}\:.
\end{equation}
This leads to the pseudo-triangle inequality on $\partial X$:
\begin{equation*} \label{pseudo_triangle}
\nu(\xi,\zeta) \le e^{2\delta} \left(\nu(\xi,\eta)\:+\:\nu(\eta,\zeta)\right) \:,\:\xi,\zeta,\eta \in \partial X\:,
\end{equation*}
where $\nu$ is defined in (\ref{visual_metric}).

\subsection{Harmonic analysis on harmonic manifolds:}
In this subsection we discuss the structure of harmonic manifolds and harmonic analysis thereon. The materials covered here can be found in \cite{BKP,My_exceptional}. 

Let $\X$ be a non-compact harmonic manifold of purely exponential volume growth, with origin $o \in \X$. For any $v \in  T^1_x \X$ (the unit tangent space at $x$) and $r > 0$, let $A(v,r)$ denote the Jacobian of the map $v \mapsto \exp_x(rv)$ and $\gamma_{x,v}$ denote the geodesic segment starting from $x$ with initial velocity $v$. The definition of a harmonic manifold which has been given in the Introduction is equivalent (\cite[p. 224]{Willmore}) to the fact that this Jacobian is solely a function of the radius, that is, there is a function $A$ on $(0,\infty)$, such that $A(v,r) = A(r)$ for all $v \in T^1_x\X$. This function $A$ is called the {\it volume density} function of $\X$. $A$ satisfies the following asymptotics:
\begin{equation} \label{jacobian_estimate}
A(r) \asymp \begin{cases}
             r^{n-1} & \text{ if } 0<r\le 1 \\
             e^{hr} & \text{ if } r>1 \:.
             \end{cases}
\end{equation}

In \cite{Kn12}, it was shown that for $\X$, a simply connected non-compact harmonic manifold of purely exponential
volume growth, the condition of purely exponential volume growth is equivalent to either of the following conditions:
\begin{enumerate}
\item $\X$ is Gromov hyperbolic.
\item $\X$ has rank one.
\item The geodesic flow of $\X$ is Anosov with respect to the Sasaki metric.
\end{enumerate}

Moreover, the Gromov boundary coincides with the visibility boundary $\partial \X$ introduced in \cite{Eberlein}. This last fact follows from the work in \cite{KP16}. 

One has a family of measures on $\partial \X$ called the visibility measures $\{\lambda_x\}_{x \in \X}$. For $x \in \X$, let $\theta_x$ denote the normalized canonical measure on $T^1_x \X$ , induced by the Riemannian metric and then the visibility measure $\lambda_x$ is obtained as the push-forward of $\theta_x$ to the boundary $\partial \X$ under the radial projection. The visibility measures $\lambda_x$ are pairwise absolutely continuous. For $(x, \xi) \in \X \times \partial \X$, the Poisson kernel is obtained as the following Radon-Nykodym derivative:
\begin{equation*}
P(x, \xi) = e^{-hB_\xi(x)} = \frac{d \lambda_x}{d \lambda_o}(\xi) \:.
\end{equation*}
 
We now talk about the spherical functions on $\X$. For $\lambda \in \C$, $\varphi_\lambda$ is the unique radial eigenfunction of $\Delta$ with eigenvalue $-(\lambda^2+\rho^2)$, normalized to $\varphi_\lambda(o)=1$ and is explicitly given by the integral formula:
\begin{equation} \label{spherical_fn}
\varphi_\lambda(x) = \int_{\partial \X} e^{\left(i\lambda-\rho\right)B_\xi(x)}\:d\lambda_o(\xi)\:\:,
\end{equation}
where we recall that $\rho=h/2$. Now as for each fixed $x \in \X$, $\lambda \mapsto \varphi_\lambda(x)$ is entire in $\C$ and since $\varphi_\lambda(x)=\varphi_{-\lambda}(x)$ for all $\lambda \in \R$, for $\lambda=i\beta$ with $\beta>0$, we have by the Harish-Chandra expansion,
\begin{eqnarray} \label{spherical_fn_estimate}
\varphi_{i\beta}(x)=\varphi_{-i\beta}(x) \asymp {\bf c}(-i\beta)\:e^{\left(\beta-\rho\right)d(o,x)}\:.
\end{eqnarray} 
In the above asymptotics, ${\bf c}(\cdot)$ is the Harish-Chandra's ${\bf c}$ function, which is a holomorphic function in the lower half-plane $\{z \in \C : Im(z)<0\}$ and in particular, is positive on the negative imaginary axis.  

We now provide a brief justification of (\ref{approx_identity}) mentioned in the Introduction. We note by (\ref{spherical_fn}) and (\ref{spherical_fn_estimate}) that
\begin{eqnarray*}
e^{-(\beta - \rho)d(o,x)}\int_{\partial \X} P_{i\beta}(x,\xi) \:d\lambda_o(\xi) &=& C_\beta \: e^{-(\beta - \rho)d(o,x)}\int_{\partial \X} e^{-(\beta +\rho)B_\xi(x)}\:d\lambda_o(\xi) \\
&=& C_\beta \: e^{-(\beta - \rho)d(o,x)} \:\varphi_{i\beta}(x) \\
&\asymp & C_\beta\: {\bf c}(-i\beta)\:.
\end{eqnarray*}
Thus by suitable normalization of $C_\beta$, we indeed get the approximate identity property (\ref{approx_identity}).

We have the following crucial estimate on the visibility measure of closed visual balls \cite[Lemma 6.3]{BKP},
\begin{equation} \label{upper_bound_visual_measure}
\lambda_o\left(\overline{\B(\xi,r)}\right) \lesssim r^h\:,
\end{equation}
where the implicit constant depends only on $h$ and $\delta$.

We recall that for an open subset $\Omega \subset \X$, an upper semi-continuous function $f: \Omega \to [-\infty,+\infty)$, with $f \not \equiv -\infty$ is subharmonic on $\Omega$ if 
\begin{equation} \label{submvp}
f(x) \le \int_{T^1_x \X} f\left(\gamma_{x,v}(r)\right) d\theta_x(v) \:,
\end{equation}
for all $x \in \Omega$ and $r>0$ sufficiently small. It is known that if $f$ is subharmonic on $\X$ then (\ref{submvp}) is true for all $r>0$. Moreover, $f$ is locally integrable and bounded above on compact sets. For $f \in C^2(\X)$, the above notion of subharmonicity is equivalent to the condition that $\Delta f  \ge 0$. A function $f$ is superharmonic if $-f$ is subharmonic.

Now as in our case, the volume density satisfies
\begin{equation*}
\int_1^{+\infty} \frac{1}{A(r)} \: dr < +\infty \:,
\end{equation*}
we have a positive Green function, which is a radial function defined by
\begin{equation*} 
G(r) = \frac{1}{C} \int_{r}^{+\infty} \frac{1}{A(s)} \:ds \:,
\end{equation*}
for some constant $C>0$. Then (\ref{jacobian_estimate}) yields the following estimates of the Green function:
\begin{equation} \label{green_estimate}
G(r) \asymp \begin{cases}
             \frac{1}{r^{n-2}} & \text{ if } 0<r\le 1 \\
             e^{-hr} & \text{ if } r>1 \:,
             \end{cases}
\end{equation}
upto a positive constant depending only on $n$ and $h$. Then for $x \in \X$, the Green function with pole at $x$ is defined by,
\begin{equation*}
G_x(y) := (G \circ d_x)(y)= G(d(x,y)) \:, \text{ for } y \in \X\:,
\end{equation*}
and is denoted by $G(x,y)$. Note that it is symmetric in its arguments. The distributional Laplacian of $G_x$ is, 
\begin{equation*}
\Delta G_x = - \delta_x \:.
\end{equation*}
$G_x$ is harmonic on $\X \setminus \{x\}$ and superharmonic on $\X$. For a Radon measure $\mu$ on $\X$, we say that it has a well-defined Green potential $G[\mu]$, if there exists $x_0 \in \X$ such that
\begin{equation*}
G[\mu](x_0) = \int_\X G(x_0,y)\: d\mu(y) < +\infty \:.
\end{equation*}
If $\mu = f\:dvol$, then we simply write $G[f]$. Green potentials are just special examples of positive superharmonic functions. 

If $f$ is a superharmonic function on $\X$, then there exists a unique Radon measure $\mu_f$ on $\X$ such that 
\begin{equation*}
\int_\X \psi \:d\mu_f = -\int_\X f \Delta \psi \:dvol \:, \text{ for all } \psi \in C^2_c(\X)\:.
\end{equation*}
The above measure $\mu_f$ is called the {\it Riesz measure} of $f$. A {\em harmonic minorant} of a superharmonic function $f$, is a harmonic function $h$ such that $h \le f$ on $\X$. A harmonic function $h$ on $\X$ is said to be the {\it greatest harmonic minorant} of a superharmonic function $f$ on $\X$ if
\begin{itemize}
\item[(i)] $h$ is a harmonic minorant of $f$ and
\item[(ii)] $h(x) \ge H(x)$ for all $x \in \X$, whenever $H$ is a harmonic minorant of $f$\:.
\end{itemize}

We now state the Riesz decomposition theorem of a superharmonic function:
\begin{lemma} \cite[Theorem 1.3]{My_exceptional} \label{riesz_decomposition_lemma}
Let $f$ be a superharmonic function on $\X$ such that it has a harmonic minorant. Then
\begin{equation*}
f(x) = F_f(x) + \int_\X G(x,y) d\mu_f(y) \:,\text{ for all } x \in \X \:,
\end{equation*}
where $F_f$ and $\mu_f$ are the greatest harmonic minorant and the Riesz measure of $f$ respectively.
\end{lemma}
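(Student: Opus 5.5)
The plan is to prove the decomposition by exhausting $\X$ with geodesic balls $B_R := B(o,R)$. First I will show that the Green potential of $\mu_f$ is finite, and then that $f - G[\mu_f]$ is the greatest harmonic minorant.

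\emph{Step 1: the Green potential is dominated by $f$ minus any harmonic minorant.} Let $H$ be a harmonic minorant of $f$ and set $g := f - H$. Then $g \ge 0$ is superharmonic, and since $\Delta H = 0$ its Riesz measure is again $\mu_f$. For $R>0$ put $\mu_R := \mu_f|_{B_R}$. This is a finite measure with compact support, so $G[\mu_R]$ is a well-defined superharmonic function with $\Delta G[\mu_R] = -\mu_R$ in the sense of distributions. Consider $w_R := g - G[\mu_R]$.
\begin{itemize}
\item On $B_R$ the distributional Laplacian of $w_R$ vanishes. By Weyl's lemma $w_R$ agrees a.e.\ with a harmonic function there.
\item Both $g$ and $G[\mu_R]$ are superharmonic, so each equals the limit of its small spherical means. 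Hence $w_R$ coincides everywhere on $B_R$ (outside the polar set where both terms are $+\infty$) with that harmonic function.
\item Off $\overline{B_R}$, $w_R$ is superharmonic, because $G[\mu_R]$ is harmonic there.
\end{itemize}
At infinity, $G[\mu_R](y) \le \mu_f(B_R)\,\sup_{x\in \overline{B_R}} G(d(x,y)) \to 0$ as $d(o,y)\to\infty$, by (\ref{green_estimate}). Together with $g\ge 0$, this gives $\liminf w_R \ge 0$ at infinity. The minimum principle for superharmonic functions on $\X$ then yields $w_R \ge 0$, that is,
\begin{equation*}
G[\mu_R] \le f - H \quad \text{on } \X .
\end{equation*}
Letting $R\to\infty$, monotone convergence gives $G[\mu_f] \le f - H$. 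In particular $G[\mu_f]$ is finite wherever $f$ is, so it is a well-defined superharmonic function with Riesz measure $\mu_f$.

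\emph{Step 2: identifying the greatest harmonic minorant.} Define $F := f - G[\mu_f]$. Its distributional Laplacian is $-\mu_f + \mu_f = 0$. The same Weyl's lemma and spherical-mean argument as in Step 1 shows that $F$ is harmonic on all of $\X$. Clearly $F \le f$, so $F$ is a harmonic minorant of $f$. If $H'$ is any harmonic minorant of $f$, Step 1 applied to $H'$ gives $G[\mu_f] \le f - H'$, which is exactly $H' \le F$. Hence $F = F_f$, and $f = F_f + G[\mu_f]$ as claimed.

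The main technical point is the minimum-principle step at infinity in Step 1. One has to use that $\overline{\X}$ is compact in the cone topology and that $G(r) \asymp e^{-hr}$ decays as $r\to\infty$, so that $G[\mu_R]$ of a compactly supported measure tends to $0$ at $\partial\X$. A second delicate point is ensuring that $w_R$ is genuinely harmonic on $B_R$, not merely harmonic a.e.; this is handled by the lower semicontinuity and mean-value characterization of superharmonic functions recalled above (\ref{submvp}).
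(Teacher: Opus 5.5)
The paper does not prove this lemma; it imports it from \cite[Theorem 1.3]{My_exceptional}, so there is no in-paper argument to compare against, and your proof has to stand on its own. Its architecture is the standard Green-space proof and is sound. The limit $R\to\infty$, the superharmonicity of $G[\mu_f]$, and Step 2 are all fine. However, the minimum-principle step in Step 1 is not justified as written. To apply the minimum principle on $\X$ you need $w_R$ to be superharmonic on all of $\X$. Your bullets only give harmonicity on $B_R$ and superharmonicity on $\X\setminus\overline{B_R}$, and say nothing across $\partial B_R$. Those properties together with $\liminf w_R\ge 0$ at infinity do not imply $w_R\ge 0$. For example, $u=\max\{-G(d(o,\cdot)),-G(R)\}$ is constant on $B_R$, harmonic off $\overline{B_R}$, and tends to $0$ at infinity. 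Yet it is negative everywhere, because it carries positive Laplacian mass on $\partial B_R$.

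\emph{The fix.} The missing ingredient is the sign of $\Delta w_R$ on the sphere, and your computation already contains it: on all of $\X$, $\Delta w_R=-\mu_f+\mu_R=-\mu_f|_{\X\setminus B_R}\le 0$. To turn this into superharmonicity, take a small ball $B'$ around any point and put $\nu':=\mu_f|_{B'\setminus B_R}$. Then $\Delta\bigl(f-H-G[\mu_R]-G[\nu']\bigr)=0$ on $B'$, and your Weyl-lemma argument gives $f-H=\tilde w+G[\nu']+G[\mu_R]$ everywhere on $B'$, with $\tilde w$ harmonic. On overlaps, the local superharmonic functions $\tilde w+G[\nu']$ agree a.e., hence everywhere. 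They therefore patch to a superharmonic $s_R$ on $\X$ with $f-H=s_R+G[\mu_R]$ on $\X$, and $s_R(y)\ge -\mu_f(B_R)\,G(d(o,y)-R)$ for $d(o,y)>R$. The minimum principle applied to $s_R$ then gives $G[\mu_R]\le f-H$, and the separate treatment of $B_R$ becomes unnecessary.

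\emph{Smaller remarks.} Upgrading a.e.\ equality of superharmonic functions to equality everywhere should use volume averages over small balls, not spherical means, since spheres are null sets. No cone-topology compactness is needed either, because the decay of $G[\mu_R]$ is uniform in $d(o,y)$.

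\emph{What your route gives.} Once repaired, it uses only a positive Green function with $G(r)\to 0$. It does not by itself give (\ref{form_harmonic_minorant}), which the paper takes from the same source and uses in the proof of Theorem \ref{tangential_limit_superharmonic_thm}. That formula does follow from your decomposition. Harmonic functions on $\X$ satisfy the spherical mean value property, and $\int_{T^1_x\X}G(d(\gamma_{x,v}(r),y))\,d\theta_x(v)=\min\{G(d(x,y)),G(r)\}$. Together these force the spherical means of $G[\mu_f]$ to tend to $0$.
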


The harmonic function $F_f$ in Lemma \ref{riesz_decomposition_lemma}, has the following explicit form in terms of the asymptotics of geodesic spherical averages of $f$ \cite[pp. 27-28]{My_exceptional}:
\begin{equation} \label{form_harmonic_minorant}
F_f(x) = \displaystyle\lim_{r \to +\infty} \int_{T^1_{x}X} f\left(\gamma_{x,v}(r)\right)\: d\theta_{x}(v) \:,\:\: x \in \X\:.
\end{equation}

Let $\X$ be a Hadamard manifold with sectional curvature $-b^2\le K_\X \le 0$, for some $b>0$. We now talk about comparison angles. If three points in $\X$ lie on the same geodesic, then they are called {\it collinear}. For three points $x,y,z$ which are not collinear, we form the geodesic triangle $\triangle$ in $\X$ by the geodesic segments $[x, y],\: [y, z],\: [z, x]$. A comparison triangle is a geodesic triangle $\overline{\triangle}$ in $\mathbb{H}^2(-b^2)$ formed by geodesic segments $[\overline{x}, \overline{y}],\: [\overline{y}, \overline{z}],\: [\overline{z}, \overline{x}]$ of the same lengths as those of $\triangle$ (such a triangle exists and is unique up to isometry). Let $\theta(y,z)$ denote the Riemannian angle between the points $y$ and $z$, subtended at $x$. The corresponding angle between $\overline{y}$ and $\overline{z}$ subtended at $\overline{x}$ is called the {\it comparison angle} of $\theta(y,z)$ in $\mathbb{H}^2(-b^2)$ and denoted by $\theta_b(y,z)$. Then by Alexandrov's angle comparison theorem,
\begin{equation} \label{finite_angle_comparison}
\theta_b(y,z) \le \theta(y,z) \:.
\end{equation}

\subsection{Hausdorff measure and Hausdorff dimension:}
In the setting of a general quasi-metric space, we now briefly recall the definitions of Hausdorff outer measure, Hausdorff dimensions and some other important results. These can be found in \cite{Falconer,Hei,Mattila}. 

Let $(X,d)$ be a quasi-metric space, i.e., $d$ satisfies positivity, symmetry but instead of the usual triangle inequality, satisfies a pseudo-triangle inequality:
\begin{equation*}
d(x,y) \le C \left(d(x,z)\:+\:d(z,y)\right)\:,\:\:x,y,z \in X\:,
\end{equation*}
for some constant $C \ge 1$. For a set $E \subset X$, $\mathfrak{d}(E)$ will denote the diameter of $E$. For $\varepsilon > 0$, an {\it $\varepsilon$-cover} of a set $E \subset X$ is a countable (or finite) collection of $d$-balls $\{B_i\}$ with 
\begin{equation*}
0 < \mathfrak{d}\left(B_i\right) \le \varepsilon \text{, for all }i \text{ such that } E \subset \displaystyle\bigcup_{i} B_i \:.
\end{equation*}
For $t \ge 0$, we recall that
\begin{equation*}
\mathcal{H}^t_\varepsilon(E) := \inf \left\{\displaystyle\sum_{i} {\left(\mathfrak{d}\left(B_i\right)\right)}^t : \{B_i\} \text{ is an } \varepsilon\text{-cover of } E\right\}\:.
\end{equation*}
Then the {\it $t$-dimensional Hausdorff outer measure} of $E$ is defined by,
\begin{equation*}
\mathcal{H}^t(E) := \displaystyle\lim_{\varepsilon \to 0} \mathcal{H}^t_\varepsilon(E) \:.
\end{equation*}
This outer measure restricts to a measure (also denoted by) $\mathcal{H}^t$ on a $\sigma$-algebra that contains all Borel sets. It is called the {\it $t$-dimensional Hausdorff measure}.

The {\it Hausdorff dimension} of $E$ is defined by
\begin{equation*}
dim_{\mathcal{H}}E := \inf \left\{t \ge 0 : \mathcal{H}^t(E) < +\infty\right\} \:.
\end{equation*}

We will require the following result on existence of sets of desired Hausdorff dimensions, which is a special case of \cite[Corollary 7]{Howroyd}:
\begin{lemma} \label{existence}
If $Y$ is a Borel subset of a complete, separable metric space such that for some $t>0$, $\mathcal{H}^t(Y)=\infty$, then there exists a compact subset $Z \subset Y$ with $dim_{\mathcal{H}}Z=t$.
\end{lemma}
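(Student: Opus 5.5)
The plan is to obtain the compact set $Z$ as a compact subset of $Y$ with \emph{positive and finite} $t$-dimensional Hausdorff measure. Once we have $Z\subset Y$ compact with $0<\mathcal{H}^t(Z)<\infty$, the conclusion is immediate from the definition of Hausdorff dimension. Finiteness of $\mathcal{H}^t(Z)$ gives $\mathcal{H}^s(Z)=0$ for every $s>t$, since $\mathcal{H}^s_\varepsilon(Z)\le \varepsilon^{s-t}\mathcal{H}^t_\varepsilon(Z)$. Positivity gives $\mathcal{H}^s(Z)=\infty$ for every $s<t$ by the same scaling inequality. Hence $dim_{\mathcal{H}}Z=t$. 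So everything reduces to a subset theorem of Davies--Howroyd type in a complete separable metric space $(M,d)$. Since $\mathcal{H}^t(Y)=\infty$, in particular $\mathcal{H}^t(Y)>0$, and that is all we will use.

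\emph{Step 1 (compact subset of positive measure).} First we would pass from the Borel set $Y$ to a compact $K\subset Y$ with $\mathcal{H}^t(K)>0$. Completeness and separability make $M$ Polish, so $Y$ is analytic. For such sets, Howroyd's weighted Hausdorff measures give a Frostman lemma. There is a constant $c>0$ (independent of $Y$) and a finite Borel measure $\mu$ with the following properties:
\begin{itemize}
\item $\mu$ is carried by $Y$ and satisfies $\mu(Y)\ge c\,\mathcal{H}^t_\infty(Y)>0$;
\item $\mu(B)\le \mathfrak{d}(B)^t$ for every ball $B$.
\end{itemize}
Since $\mu$ is a finite Borel measure on a Polish space, it is tight. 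We can therefore pick a compact $K\subset Y$ with $\mu(K)>0$, and the mass distribution principle then yields $\mathcal{H}^t(K)\ge \mu(K)>0$.

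\emph{Step 2 (cutting down to finite measure).} This is the main obstacle: $K$ may well have $\mathcal{H}^t(K)=\infty$, and one needs a compact $Z\subset K$ with $0<\mathcal{H}^t(Z)<\infty$. This is exactly the content of Howroyd's extension of Davies' theorem from Euclidean space to arbitrary metric spaces, which is what \cite[Corollary 7]{Howroyd} provides. My plan would be to invoke it directly.

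If one wants an argument, I would try the following. Consider the upper $t$-density
\[
\Theta^{*t}(\mu,\xi)=\limsup_{r\to0}\frac{\mu(B(\xi,r))}{(2r)^t}.
\]
The Frostman bound $\mu(B)\le\mathfrak{d}(B)^t$ gives $\Theta^{*t}(\mu,\xi)\le 1$ for every $\xi$. We would then find a compact $Z\subset K$ of positive $\mu$-measure on which this density is also bounded below by some $a>0$; the density lower bound is the difficult part of this step. Once $Z$ is found, the standard density theorems give
\[
a^{\,}\mathcal{H}^t(Z)\lesssim \mu(Z)\lesssim \mathcal{H}^t(Z).
\]
The left inequality forces $\mathcal{H}^t(Z)<\infty$, and the right one forces $\mathcal{H}^t(Z)>0$.

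Producing the density lower bound on a set of positive measure is the delicate point. It relies on Howroyd's construction of a measure that is comparable to $\mathcal{H}^t$ restricted to a set, which is valid in non-doubling metric spaces. For this reason I would quote \cite{Howroyd} at this step rather than reprove it. Combining Steps 1 and 2 with the dimension computation above finishes the proof.
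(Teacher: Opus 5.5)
Your proposal is correct and takes essentially the paper's route: the paper gives no argument beyond citing \cite[Corollary 7]{Howroyd}, and your substantive step is the same citation, namely Howroyd's metric-space version of Davies' theorem giving a compact $Z\subset Y$ with $0<\mathcal{H}^t(Z)<\infty$. Your scaling argument $\mathcal{H}^s_\varepsilon\le\varepsilon^{s-t}\mathcal{H}^t_\varepsilon$ then correctly gives $dim_{\mathcal{H}}Z=t$, and it remains valid for the paper's ball-cover definition, which is comparable to the usual one up to a factor $2^t$; your Step 1 and the density sketch are redundant once Howroyd's result is quoted for Borel (indeed analytic) sets, as the paper does.
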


Another crucial tool in geometric measure theory is the Frostmann's lemma \cite[Theorem 8.17]{Mattila}:
\begin{lemma}  \label{frostmann_lemma}
Let $K$ be a compact subset of a compact metric space $(X,d)$. If $\mathcal{H}^t(K)>0$, then $K$ contains the support of a positive measure $\mu$ satisfying,
\begin{equation*}
\mu\left(B(x,r)\right) \le Cr^t\:,\: \forall x \in X,\:r>0\:,
\end{equation*}
for some constant $C>0$\:.
\end{lemma}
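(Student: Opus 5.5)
We would prove this by duality (Hahn--Banach and Riesz representation), using weighted Hausdorff content in place of the dyadic cubes of the Euclidean proof, since a general compact metric space has no dyadic structure. Replacing $K$ by $\text{supp}$ considerations at the end, it suffices to produce a nonzero positive Borel measure $\mu$ with $\mu(X\setminus K)=0$ and $\mu(B(x,r))\le C r^t$.

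\textbf{Step 1: weighted content.} For $\varepsilon>0$ and a bounded function $g\ge 0$ on $X$, define
\begin{equation*}
p_\varepsilon(g):=\inf\Big\{\sum_i c_i\, r_i^t \;:\; c_i\ge 0,\ r_i\le \varepsilon,\ \sum_i c_i\,\chi_{B(x_i,r_i)}\ge g\Big\}\:,
\end{equation*}
where the infimum runs over countable families of balls. The map $p_\varepsilon$ is sublinear, monotone, and satisfies $p_\varepsilon(g)\le \|g\|_\infty\, p_\varepsilon(1)<\infty$ by compactness of $X$. Taking coefficients $c_i=1$ gives $p_\varepsilon(\chi_K)\le \mathcal{H}^t_{2\varepsilon}(K)$.

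\textbf{Step 2: comparison with Hausdorff content (main obstacle).} The key point is the reverse inequality: there is $\varepsilon_0>0$ with
\begin{equation*}
p_{\varepsilon_0}(\chi_K)\ge c\,\mathcal{H}^t_{\varepsilon'}(K)>0\:,
\end{equation*}
for some $\varepsilon'$ comparable to $\varepsilon_0$ and some $c>0$. This is where the hypothesis $\mathcal{H}^t(K)>0$ enters, via some $\mathcal{H}^t_{\varepsilon'}(K)>0$.

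To prove it, we would take a weighted cover $\sum_i c_i\chi_{B_i}\ge\chi_K$. By compactness of $K$ and a truncation argument, we may assume the cover is finite with rational weights. Multiplying through, this becomes a family of balls covering each point of $K$ at least $N$ times. We would then run Howroyd's combinatorial argument, which is a repeated application of the $5r$-covering lemma. It extracts from the family roughly $N$ subfamilies, each of whose $5$-fold enlargements covers $K$, at total cost $\lesssim \sum c_i r_i^t$. Dividing by $N$ gives the comparison with the unweighted content at scale $5\varepsilon_0$.

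This step is the delicate part. In $\mathbb{R}^n$ it is replaced by the dyadic ``max-flow'' construction, but in a general compact metric space the covering-lemma route seems unavoidable.

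\textbf{Step 3: Hahn--Banach and Riesz representation.} On $C(X)$, which is real-valued since $X$ is compact, consider the sublinear functional
\begin{equation*}
q(f):=p_{\varepsilon_0}(f^+\chi_K)\:.
\end{equation*}
We would apply Hahn--Banach to extend from constants, obtaining a linear functional $L\le q$ on $C(X)$. Since $q(f)=0$ when $f\le 0$, $L$ is positive, and the Riesz representation theorem yields a positive measure $\mu$.

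By choosing $L$ to realise $q$ suitably on functions close to $\chi_K$, through an outer regularity approximation with continuous $f_k\downarrow\chi_K$ and using upper semicontinuity of $p$ on compact sets, we get $\mu(K)\ge p_{\varepsilon_0}(\chi_K)>0$ and $\mu(X\setminus K)=0$.

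For a ball $B(x,r)$ with $r\le\varepsilon_0$, testing against continuous functions dominated by $\chi_{B(x,r)}$ gives
\begin{equation*}
\mu(B(x,r))\le q(\chi_{B(x,r)})\le r^t\:,
\end{equation*}
using the single-ball cover. For $r>\varepsilon_0$ we have trivially $\mu(B(x,r))\le \mu(X)\le \varepsilon_0^{-t}\mu(X)\,r^t$. Hence $\mu(B(x,r))\le C r^t$ for all $x\in X$ and $r>0$, and $\text{supp}\,\mu\subset K$ since $K$ is closed and $\mu(X\setminus K)=0$.
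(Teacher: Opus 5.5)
Your proposal is correct. The paper gives no proof of its own and imports the lemma from Mattila (Theorem 8.17); your argument (a weighted Hausdorff content, the iterated $5r$-covering comparison showing that it dominates a multiple of $\mathcal{H}^t_{\varepsilon'}(K)$, then Hahn--Banach and Riesz) is, as far as I recall, the weighted-Hausdorff-measure proof that citation refers to, so it takes essentially the same route. One simplification: $\mu(X\setminus K)=0$ and $\mu(K)=p_{\varepsilon_0}(\chi_K)$ follow directly from $L\le q$, since $q(\pm f)=0$ whenever $f|_K=0$ and $L(1)=q(1)$, so the outer-regularity and semicontinuity detour in Step 3 is not needed.
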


\section*{Part I: Boundary behaviour of eigenfunctions}

\section{Non-tangential limits}
\label{sec3}
In this section, our goal is to prove Theorem \ref{Fatou_thm} and hence we work under the hypothesis of Theorem \ref{Fatou_thm}. In this direction, let us recall the Hardy-Littlewood maximal function of complex measures $\mu$ on $\partial \X$, in terms of their total variation $|\mu|$:
\begin{equation*}
\M_{HL}\mu(\xi):=\displaystyle\sup_{0<r<1} \frac{|\mu|\left(\B(\xi,r)\right)}{\lambda_o\left(\B(\xi,r)\right)}\:,\:\:\xi \in \partial \X\:.
\end{equation*}
If $d\mu =f\:d\lambda_o$, then we simply write $\M_{HL}f$.

We first obtain the following estimate on the visibility measures of superlevel sets of the maximal function:
\begin{lemma} \label{Sec3_lemma1}
For any complex measure $\mu$ on $\partial \X$ and any $t>0$, we have
\begin{equation*}
\lambda_o\left(\left\{\xi \in \partial \X : \M_{HL}\mu(\xi)>t\right\}\right) \lesssim \frac{|\mu|\left(\partial \X\right)}{t}\:,
\end{equation*}
where the implicit constant depends only on the intrinsic geometry of $\X$\:.
\end{lemma}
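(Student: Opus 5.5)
This is the weak type $(1,1)$ estimate for the Hardy--Littlewood maximal function on the quasi-metric measure space $(\partial \X,\nu,\lambda_o)$, which is $h$-doubling by the standing hypothesis of Theorem \ref{Fatou_thm}. I will run the classical Vitali covering argument, adapted to the quasi-metric $\nu$.

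\textbf{Step 1: reduce to compact sets.} Fix $t>0$ and set $E_t=\{\xi : \M_{HL}\mu(\xi)>t\}$. For each $\xi\in E_t$ choose $r_\xi\in(0,1)$ with
\[
|\mu|\bigl(\B(\xi,r_\xi)\bigr)>t\,\lambda_o\bigl(\B(\xi,r_\xi)\bigr).
\]
Since $\lambda_o$ is a finite Borel measure on the compact metrizable space $\partial\X$, it is inner regular. It therefore suffices to bound $\lambda_o(K)$ for every compact $K\subset E_t$. The balls $\B(\xi,r_\xi)$ are open: $\nu$ is continuous because the Gromov product extends continuously to $\partial\X\times\partial\X$. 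Hence $K$ is covered by finitely many of them.

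\textbf{Step 2: Vitali selection.} Run the greedy selection, always taking the ball of largest radius among those not meeting any ball already chosen. This produces pairwise disjoint balls $\B(\xi_j,r_j)$ such that every ball in the finite cover meets some $\B(\xi_j,r_j)$ with $r_j\ge r$.

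Now use the pseudo-triangle inequality $\nu(\xi,\zeta)\le e^{2\delta}(\nu(\xi,\eta)+\nu(\eta,\zeta))$, applied twice. It gives
\[
\B(\xi,r)\subset \B(\xi_j,Ar_j), \qquad A=2e^{2\delta}+e^{4\delta},
\]
so $K\subset\bigcup_j \B(\xi_j,Ar_j)$.

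\textbf{Step 3: doubling and summation.} Iterating the $h$-doubling inequality $m=\lceil\log_2 A\rceil$ times gives
\[
\lambda_o\bigl(\B(\xi_j,Ar_j)\bigr)\le (C2^h)^m\,\lambda_o\bigl(\B(\xi_j,r_j)\bigr).
\]
This step has a technicality: the doubling definition is stated for visual balls with $r\in(0,1)$, but $Ar_j$ may exceed $1$. In that case $\B(\xi_j,Ar_j)$ is contained in $\partial\X$ anyway, because $\nu\le 1$ (Gromov products based at $o$ are nonnegative). So I bound its measure by $\lambda_o(\partial\X)=1$, or by $\lambda_o(\B(\xi_j,s))$ with $s$ close to $1$, and then double down from $s$ to $r_j$ only finitely many times, with a number of steps depending only on $A$.

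Summing over $j$ and using disjointness:
\[
\lambda_o(K)\lesssim \sum_j\lambda_o\bigl(\B(\xi_j,r_j)\bigr)<\frac1t\sum_j|\mu|\bigl(\B(\xi_j,r_j)\bigr)\le \frac{|\mu|(\partial\X)}{t}.
\]
The implicit constant depends only on $\delta$, $h$ and the doubling constant $C$.

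The \textbf{main obstacle} is handling the quasi-metric rather than a metric. There are two points to check: that the enlargement factor $A$ comes out uniform from the $2\delta$-version of the four-point condition (\ref{four_pt_condition'}), and that the radius range $r<1$ is handled correctly. Everything else is the standard argument.
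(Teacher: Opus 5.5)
Your proof is correct and follows the paper's argument: select pairwise disjoint balls by a Vitali-type procedure, enlarge them by a factor depending only on $\delta$ (the paper uses $5e^{4\delta}$), then apply $h$-doubling and disjointness. The differences are cosmetic. You pass to compact subsets and a finite greedy selection, where the paper invokes the countable Vitali covering lemma for quasi-metric spaces; this tacitly uses that the superlevel set is Borel, which holds because the ratio is left-continuous in $r$, so the supremum may be taken over rational radii. You also handle explicitly the case where the enlarged radius is at least $1$, which the paper passes over.
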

\begin{proof}
Let $E:=\left\{\xi \in \partial \X : \M_{HL}\mu(\xi)>t\right\}$. Then for each $\xi \in E$, there exists $r_\xi \in (0,1)$ such that 
\begin{equation} \label{Sec3_lemma1_eq1}
|\mu|\left(\B\left(\xi,r_\xi\right)\right) > t\: \lambda_o\left(\B\left(\xi,r_\xi\right)\right)\:.
\end{equation}
Then by Vitali $5$-covering Lemma for quasi-metric spaces, there exist countably many visual balls $\{\mathscr{B}\left(\xi_j, r_{\xi_j}\right)\}_{j=1}^\infty$ satisfying (\ref{Sec3_lemma1_eq1}) such that
\begin{itemize}
\item $\mathscr{B}\left(\xi_j, r_{\xi_j}\right) \cap \mathscr{B}\left(\xi_k, r_{\xi_k}\right) = \emptyset$ for all $j \ne k$,
\item $E \subset \displaystyle\bigcup_{j=1}^\infty \mathscr{B}\left(\xi_j, 5e^{4\delta}r_{\xi_j}\right)$ \:.
\end{itemize} 
Thus by the $h$-doubling hypothesis,
\begin{equation*}
\lambda_o(E) \le \sum_{j=1}^\infty \lambda_o\left(\mathscr{B}\left(\xi_j, 5e^{4\delta}r_{\xi_j}\right)\right) \lesssim \sum_{j=1}^\infty \lambda_o\left(\mathscr{B}\left(\xi_j, r_{\xi_j}\right)\right)\:.
\end{equation*} 
Now an application of (\ref{Sec3_lemma1_eq1}) completes the proof. Indeed,
\begin{eqnarray*}
\sum_{j=1}^\infty \lambda_o\left(\mathscr{B}\left(\xi_j, r_{\xi_j}\right)\right) < \frac{1}{t}\sum_{j=1}^\infty |\mu|\left(\mathscr{B}\left(\xi_j, r_{\xi_j}\right)\right) = \frac{1}{t} |\mu|\left(\bigcup_{j=1}^\infty \mathscr{B}\left(\xi_j, r_{\xi_j}\right)\right) \le \frac{|\mu|\left(\partial \X\right)}{t}\:.
\end{eqnarray*}
\end{proof}

As a special case of Lemma \ref{Sec3_lemma1}, we obtain the weak $L^1$ boundedness of the maximal function:
\begin{corollary} \label{Sec3_corollary2}
For $f \in L^1(\partial \X)$ and $t>0$, we have
\begin{equation*}
\lambda_o\left(\left\{\xi \in \partial \X : \M_{HL}f(\xi)>t\right\}\right) \lesssim \frac{\|f\|_{L^1(\partial \X)}}{t}\:,
\end{equation*}
where the implicit constant depends only on the intrinsic geometry of $\X$\:.
\end{corollary}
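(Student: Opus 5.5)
This follows directly from Lemma \ref{Sec3_lemma1} by taking the measure $d\mu = f\,d\lambda_o$. Only two checks are needed: that $d\mu = f\,d\lambda_o$ is an admissible complex measure in the sense of that lemma, and that its total variation is $\|f\|_{L^1(\partial \X)}$.

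First, for $f \in L^1(\partial \X) = L^1(\partial\X,\lambda_o)$, define $\mu(A) := \int_A f\,d\lambda_o$ for Borel sets $A \subset \partial \X$. Since $\partial \X$ is a compact metrizable space and $\lambda_o$ is a finite Borel measure, the real and imaginary parts of $\mu$ are finite signed Borel measures on a compact metric space. Such measures are automatically Radon. So $\mu$ is a complex measure in the paper's convention.

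Second, the total variation of $\mu$ is $d|\mu| = |f|\,d\lambda_o$, which is the standard identity for measures with a density. Two consequences follow:
\begin{equation*}
|\mu|(\partial \X) = \|f\|_{L^1(\partial \X)}, \qquad \M_{HL}\mu(\xi) = \sup_{0<r<1} \frac{1}{\lambda_o\left(\B(\xi,r)\right)}\int_{\B(\xi,r)} |f|\,d\lambda_o .
\end{equation*}
The second expression is precisely what the notation $\M_{HL}f$ denotes.

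Applying Lemma \ref{Sec3_lemma1} to this $\mu$ then gives, for every $t>0$,
\begin{equation*}
\lambda_o\left(\left\{\xi \in \partial \X : \M_{HL}f(\xi)>t\right\}\right) \lesssim \frac{\|f\|_{L^1(\partial \X)}}{t}.
\end{equation*}
The implicit constant is the same as in the lemma, namely the one coming from the $h$-doubling constant and $\delta$. It therefore depends only on the intrinsic geometry of $\X$.

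There is no real obstacle. The only point to be careful about is the identification $|\mu| = |f|\,\lambda_o$, so that the maximal function of the measure coincides with the maximal function of $f$ as defined.
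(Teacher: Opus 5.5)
Your proposal is correct and follows the paper's route exactly: the paper obtains this corollary as the special case $d\mu = f\,d\lambda_o$ of Lemma \ref{Sec3_lemma1}, using $|\mu| = |f|\,\lambda_o$, so that $|\mu|(\partial \X) = \|f\|_{L^1(\partial \X)}$ and $\M_{HL}\mu = \M_{HL}f$. Your additional check that $f\,d\lambda_o$ has Radon real and imaginary parts is a harmless detail that the paper leaves implicit.
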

By a standard application of the method of maximal functions, one obtains the Lebesgue differentiation theorem from Corollary \ref{Sec3_corollary2}:
\begin{corollary} \label{Sec3_corollary3}
If $f \in L^1(\partial \X)$, then for $\lambda_o$-a.e. $\xi \in \partial \X$, we have
\begin{equation*}
\lim_{r \to 0+}\frac{1}{\lambda_o\left(\B(\xi,r)\right)} \int_{\B(\xi,r)} |f(\eta)-f(\xi)|\:d\lambda_o(\eta)=0\:.
\end{equation*}
Hence,
\begin{equation*}
f(\xi)= \lim_{r \to 0+}\frac{1}{\lambda_o\left(\B(\xi,r)\right)} \int_{\B(\xi,r)} f(\eta)\:d\lambda_o(\eta)\:,\:\: \lambda_o\text{-a.e. } \xi \in \partial \X\:.
\end{equation*}
\end{corollary}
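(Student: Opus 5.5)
The plan is the standard maximal-function argument, based on the weak $(1,1)$ bound of Corollary \ref{Sec3_corollary2}. Fix $f \in L^1(\partial \X)$. The first step is to note that the statement holds trivially at every point of a dense class. Continuous functions on the compact space $\partial \X$ are dense in $L^1(\lambda_o)$: $\lambda_o$ is a finite Radon measure on a compact metrizable space. Let $g$ be continuous. The quasi-metric $\nu$ generates the cone topology on $\partial \X$, because visual balls form a neighbourhood base. By compactness, $g$ is therefore uniformly continuous with respect to $\nu$, and we get $\frac{1}{\lambda_o(\B(\xi,r))}\int_{\B(\xi,r)}|g(\eta)-g(\xi)|\,d\lambda_o(\eta) \le \sup_{\nu(\eta,\xi)<r}|g(\eta)-g(\xi)| \to 0$ uniformly in $\xi$. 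One minor point to record is that $\lambda_o(\B(\xi,r))>0$ for every $\xi$ and $r$. This follows because $\lambda_o$ has full support, being the push-forward of the full-support measure $\theta_o$ under a homeomorphism $T^1_o\X \to \partial \X$. Hence the averages are well defined.

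Next we introduce the oscillation functional $\Omega f(\xi):=\limsup_{r\to 0+}\frac{1}{\lambda_o(\B(\xi,r))}\int_{\B(\xi,r)}|f(\eta)-f(\xi)|\,d\lambda_o(\eta)$. It satisfies $\Omega f \le \Omega(f-g)+\Omega g = \Omega(f-g)$, and also $\Omega(f-g)(\xi) \le \M_{HL}(f-g)(\xi) + |f(\xi)-g(\xi)|$. Fix $t>0$. Corollary \ref{Sec3_corollary2} together with Chebyshev's inequality gives
\begin{equation*}
\lambda_o\left(\{\Omega f>2t\}\right) \le \lambda_o\left(\{\M_{HL}(f-g)>t\}\right)+\lambda_o\left(\{|f-g|>t\}\right) \lesssim \frac{\|f-g\|_{L^1(\partial \X)}}{t}\:.
\end{equation*}
Here we use outer measure in case measurability of $\Omega f$ is unclear. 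Letting $\|f-g\|_{L^1}\to 0$ shows that $\{\Omega f>2t\}$ is $\lambda_o$-null for every $t$. Taking the union over $t=1/k$, $k \in \N$, shows that $\Omega f=0$ almost everywhere, which is the first assertion.

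The second assertion then follows immediately from the estimate $\left|\frac{1}{\lambda_o(\B)}\int_{\B} f\,d\lambda_o - f(\xi)\right| \le \frac{1}{\lambda_o(\B)}\int_{\B}|f-f(\xi)|\,d\lambda_o$. I expect no serious obstacle. The only points needing care are topological: first, that $\nu$-balls are open neighbourhoods generating the cone topology, so that density of continuous functions and their uniform $\nu$-continuity are legitimate; and second, that $\lambda_o$ gives positive mass to every visual ball. Both follow from the continuity of the Gromov product on $\partial\X\times\partial\X$ and from the construction of $\lambda_o$. If it is more convenient to avoid measurability issues, one can replace the $\limsup$ over $r$ by a $\limsup$ over rational $r$. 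This is possible since $r \mapsto \lambda_o(\B(\xi,r))$ and $r\mapsto\int_{\B(\xi,r)}$ are left-continuous, as the balls are open.
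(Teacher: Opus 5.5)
Your proposal is correct and is the standard maximal-function argument that the paper invokes without writing out: approximate $f$ by continuous functions, then apply the weak $(1,1)$ bound of Corollary \ref{Sec3_corollary2} for $\M_{HL}$. Your added checks fill in details the paper leaves implicit. These are that $\nu$-balls are open and form a neighbourhood base for the cone topology, that $\lambda_o$ charges every visual ball, and that measurability can be handled by outer measure or by restricting to rational radii.
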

\begin{remark} \label{Sec3_remark4}
The points $\xi \in \partial \X$ for which the conclusions in Corollary \ref{Sec3_corollary3} are valid, are called {\it Lebesgue points} of $f$. 
\end{remark}

We now introduce the important notion of the maximal function in  non-tangential cones, which were defined in Definition \ref{non-tangential_limit_defn}:
\begin{definition} \label{non-tangential_max_fn}
Let $u$ be a continuous function on $\X$. For $\alpha>1$, the {\it non-tangential maximal function} of $u$, is defined on $\partial \X$ by,
\begin{equation*}
\M_{\alpha}u(\xi):=\sup\left\{|u(x)|:x \in \Gamma_\alpha(\xi)\right\}\:,\:\:\xi \in \partial \X\:.
\end{equation*}
\end{definition} 

The following estimate relates the non-tangential maximal function of weighted generalized Poisson integrals and the Hardy-Littlewood maximal function of the boundary measure and is the heart of the matter in the proof of Theorem \ref{Fatou_thm}:
\begin{lemma} \label{maximal_estimate_lemma}
For $\beta>0, \alpha >1$ and any complex measure $\mu$ on $\partial \X$,
\begin{equation*}
\M_\alpha\left(e^{-(\beta - \rho)d(o,\cdot)}\PP_{i\beta}[\mu]\right) \lesssim \M_{HL}\mu\:,
\end{equation*}
where the implicit constant depends only on $\alpha, \beta$ and the intrinsic geometry of $\X$\:. 
\end{lemma}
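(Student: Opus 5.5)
Fix $\xi \in \partial \X$ and $x \in \Gamma_\alpha(\xi)$, and write $t=d(o,x)$. The plan is to decompose $\partial \X$ into visual annuli centred at $\xi$ at scale $e^{-t}$, bound the kernel on each annulus, and sum a geometric series. Since $|\PP_{i\beta}[\mu](x)| \le \int P_{i\beta}(x,\eta)\,d|\mu|(\eta)$, it suffices to show
\begin{equation*}
e^{-(\beta-\rho)t}\int_{\partial \X} C_\beta\, e^{-(\beta+\rho)B_\eta(x)}\,d|\mu|(\eta) \lesssim \M_{HL}\mu(\xi)\:.
\end{equation*}

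\textbf{Kernel bound.} The standard Gromov-hyperbolic estimate is $B_\eta(x) = t - 2(x|\eta)_o + O(\delta)$, valid in the CAT(0) Gromov hyperbolic setting. Hence
\begin{equation*}
e^{-(\beta-\rho)t}P_{i\beta}(x,\eta) \asymp e^{-2\beta t} e^{2(\beta+\rho)(x|\eta)_o}\:.
\end{equation*}
Two facts about $(x|\eta)_o$ will be used. First, $(x|\eta)_o \le t$ always. Second, the extended four-point condition (\ref{four_pt_condition'}) together with the cone condition $(x|\xi)_o \ge t - \log\alpha$ gives the following: if $\nu(\xi,\eta) \ge C e^{-t}$ with $C$ large depending on $\alpha,\delta$, then $(x|\eta)_o \le (\xi|\eta)_o + 2\delta$. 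Indeed, $(\xi|\eta)_o \ge \min\{(x|\xi)_o,(x|\eta)_o\}-2\delta$, and the minimum cannot be $(x|\xi)_o$ because $(\xi|\eta)_o < t-\log\alpha-2\delta$.

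\textbf{Annular decomposition.} Set $r_0 = Ce^{-t}$ and, for $k \ge 1$, $A_k = \B(\xi,2^k r_0)\setminus \B(\xi,2^{k-1}r_0)$. We may stop once $2^k r_0 \ge 1$, since $\nu \le 1$ everywhere.
\begin{itemize}
\item On $\B(\xi,r_0)$, the first fact bounds the kernel by $e^{2\rho t}=e^{ht}$. By the $h$-doubling hypothesis, $\lambda_o(\B(\xi,r_0)) \gtrsim r_0^h \asymp e^{-ht}$; doubling gives comparability to $\lambda_o(\partial \X)$ at unit scale. So this piece contributes $\lesssim |\mu|(\B(\xi,r_0))/\lambda_o(\B(\xi,r_0)) \le \M_{HL}\mu(\xi)$.
\item On $A_k$, the second fact gives $e^{(x|\eta)_o} \lesssim (2^{k}r_0)^{-1}$. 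So the kernel is $\lesssim e^{-2\beta t}(2^k r_0)^{-2\beta-h} \asymp 2^{-k(2\beta+h)} e^{ht}$. Doubling gives $\lambda_o(\B(\xi,2^k r_0)) \gtrsim 2^{kh}e^{-ht}$, so the $k$-th piece is $\lesssim 2^{-2\beta k}\,\M_{HL}\mu(\xi)$.
\end{itemize}
Summing over $k$ gives a geometric series that converges precisely because $\beta>0$. Taking the supremum over $x \in \Gamma_\alpha(\xi)$ finishes the proof. The constants depend on $\alpha$ through $C$, on $\beta$, on $\delta$, and on the doubling constant. If $t$ is small (so $r_0 \ge 1$), the first bullet alone handles everything.

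\textbf{Main obstacle.} The delicate step is the lower bound $\lambda_o(\B(\xi,r)) \gtrsim r^h$ used in both bullets. The $h$-doubling hypothesis only gives the ratio bound $\lambda_o(\B(\xi,2r)) \le C2^h\lambda_o(\B(\xi,r))$. Iterating it downward from $r\asymp 1$, where $\lambda_o(\B(\xi,r))$ is bounded below by compactness, yields $\lambda_o(\B(\xi,r)) \gtrsim C^{-\log_2(1/r)} r^h$. This is weaker than $r^h$ unless the constant $C$ in the doubling definition is absorbed. I would try to reorganise the sum so that it uses ratios between consecutive annuli directly rather than absolute lower bounds. If that does not work, I would interpret the hypothesis as giving $\lambda_o(\B(\xi,r)) \asymp r^h$ (Ahlfors regularity), combining with (\ref{upper_bound_visual_measure}). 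This is the point to check most carefully.

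A secondary check is the Busemann/Gromov-product comparison with an $O(\delta)$ error, uniformly for $\eta \in \partial \X$. This should follow from the boundary continuity of Gromov products in (\ref{Gromov_product_bdry}).
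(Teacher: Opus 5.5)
Your decomposition follows the paper's proof. You use a visual ball of radius $\asymp e^{-d(o,x)}$ about $\xi$ with dyadic annuli outside it, the bound $-B_\eta(x)\le d(o,x)$ on the ball, and the four-point condition (\ref{four_pt_condition'}) to get $e^{(x|\eta)_o}\lesssim (2^k r_0)^{-1}$ on the $k$-th annulus. You then sum $\sum_k 2^{-2\beta k}$.

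Two details are fine but could be simpler. Your min-form of the four-point argument, with $r_0=Ce^{-t}$ and $C>\alpha e^{2\delta}$, is a valid substitute for the paper's $e^{-(\xi|\eta)_o}<e^{2\delta}(\alpha+1)e^{-(x|\eta)_o}$. Also, $B_\eta(x)=d(o,x)-2(x|\eta)_o$ holds exactly here by boundary continuity of the Gromov product, so no $O(\delta)$ error term is needed.

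\textbf{The gap.} The measure comparison that closes each bullet goes in the wrong direction. On the ball you have
$$e^{ht}|\mu|(\B(\xi,r_0))\asymp r_0^{-h}|\mu|(\B(\xi,r_0)).$$
To dominate this by $|\mu|(\B(\xi,r_0))/\lambda_o(\B(\xi,r_0))$ you need $r_0^{-h}\lesssim \lambda_o(\B(\xi,r_0))^{-1}$, i.e.\ the \emph{upper} bound $\lambda_o(\B(\xi,r_0))\lesssim r_0^{h}$. The lower bound $\lambda_o(\B(\xi,r_0))\gtrsim r_0^{h}$ that you invoke gives $|\mu|(\B)/\lambda_o(\B)\lesssim r_0^{-h}|\mu|(\B)$, which is the reverse of what is needed. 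The same happens on every annulus with $r_0$ replaced by $2^k r_0$. So as written neither bullet is justified, and the ``main obstacle'' you isolate concerns an inequality the argument does not use.

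\textbf{The fix.} The inequality you need is (\ref{upper_bound_visual_measure}), $\lambda_o(\overline{\B(\xi,r)})\lesssim r^h$. It holds on every harmonic manifold of purely exponential volume growth, with no doubling assumption. With it both bullets close at once, giving $\lesssim\M_{HL}\mu(\xi)$ on the ball and $\lesssim 2^{-2\beta k}\M_{HL}\mu(\xi)$ on the $k$-th annulus. This is exactly how the paper argues. In particular, $h$-doubling plays no role in this lemma; it enters only through the Vitali covering in Lemmas \ref{Sec3_lemma1} and \ref{Sec3_lemma8}. Your fallbacks (rearranging via ratios of consecutive annuli, or assuming Ahlfors regularity) are therefore unnecessary. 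The Ahlfors version would work only because it contains the upper bound.

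\textbf{A smaller point, shared with the paper's proof.} The outermost annulus, and your case $r_0\ge 1$, produce a ratio at radius $\ge 1$. The maximal function $\M_{HL}$ is a supremum over $0<r<1$ and does not control such a ratio. For instance, let $\eta_0$ be the other endpoint of the geodesic line through $o$ and $\xi$. Then $\nu(\xi,\eta_0)=1$, so $\M_{HL}\delta_{\eta_0}(\xi)=0$, while the weighted integral at $x=o\in\Gamma_\alpha(\xi)$ equals $C_\beta$. Letting $r$ range up to a fixed constant larger than $1$ in $\M_{HL}$, or adding $|\mu|(\partial\X)$ on the right, repairs this and is harmless for the later applications.
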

\begin{proof}
Let $\xi \in \partial \X$ and without loss of generality, let us assume that $\M_{HL}\mu(\xi)<\infty$. We fix $x \in \Gamma_\alpha(\xi)$ and set $r:=\alpha e^{-d(o,x)}$. Then there exists a smallest integer $m$ such that $2^mr>1$. Let
\begin{eqnarray*}
E_0 &:=& \B(\xi,r)\:,\\
E_j &:=& \B(\xi,2^jr) \setminus \B(\xi,2^{j-1}r)\:,\:1 \le j \le m\:. 
\end{eqnarray*}
We start by noting that
\begin{equation*}
\left|e^{-(\beta - \rho)d(o,x)}\PP_{i\beta}[\mu](x)\right| \le C_\beta \sum_{j=0}^m I_{\beta,j}(x)\:,
\end{equation*}
where
\begin{equation*}
I_{\beta,j}(x) = e^{-(\beta - \rho)d(o,x)} \int_{E_j} e^{-(\beta + \rho)B_\eta(x)}\:d|\mu|(\eta)\:.
\end{equation*}
First for $\eta \in E_0=\B(\xi,r)$, as by triangle inequality, $- B_\eta(x) \le d(o,x)$, we get that
\begin{eqnarray*}
I_{\beta,0}(x) &=& e^{-(\beta - \rho)d(o,x)} \int_{\B(\xi,r)} e^{-(\beta + \rho)B_\eta(x)}\:d|\mu|(\eta) \\
&\le & e^{hd(o,x)}\:|\mu|\left(\B(\xi,r)\right) \\
&=& \frac{\alpha^h}{r^h}\:|\mu|\left(\B(\xi,r)\right)\:.
\end{eqnarray*}
Then by (\ref{upper_bound_visual_measure}), we get
\begin{equation} \label{maximal_estimate_pf_eq1}
I_{\beta,0}(x) \lesssim \frac{|\mu|\left(\B(\xi,r)\right)}{\lambda_o\left(\B(\xi,r)\right)} \le \M_{HL}\mu(\xi)\:,
\end{equation}
where the implicit constant only depends on $\alpha$ and the intrinsic geometry of $\X$\:.

We next note by (\ref{four_pt_condition'}) that for any $\eta \in \partial \X$,
\begin{equation*}
e^{-(\xi|\eta)_o} \le e^{2\delta} \left(e^{-(x|\xi)_o}\:+\:e^{-(x|\eta)_o}\right)\:.
\end{equation*}
Now as $x \in \Gamma_\alpha(\xi)$, we get that
\begin{equation*}
e^{-(\xi|\eta)_o} < e^{2\delta} \left(\alpha e^{-d(o,x)}\:+\:e^{-(x|\eta)_o}\right)\:.
\end{equation*}
Then as by triangle inequality,
\begin{equation*}
(x|\eta)_o \le d(o,x)\:,
\end{equation*}
it follows that
\begin{equation*}
e^{-(\xi|\eta)_o} < e^{2\delta} (\alpha +1)e^{-(x|\eta)_o}\:.
\end{equation*}
Thus for $\eta \in E_j,\:j \ge 1$, we have that
\begin{equation*}
2^{j-1}r \le \nu\left(\xi,\eta\right)=e^{-(\xi|\eta)_o} < e^{2\delta} (\alpha +1)e^{-(x|\eta)_o}\:,
\end{equation*}
that is,
\begin{equation} \label{maximal_estimate_pf_eq2}
e^{(x|\eta)_o} < \frac{e^{2\delta} (\alpha +1)}{2^{j-1}r}\:.
\end{equation}
We now estimate $I_{\beta,j}(x),\:j\ge 1$, by expanding the Busemann function into distance and the Gromov product and then plugging in (\ref{maximal_estimate_pf_eq2}) to get,
\begin{eqnarray*}
I_{\beta,j}(x) = e^{-2\beta d(o,x)} \int_{E_j} e^{2(\beta+\rho)(x|\eta)_o}\:d|\mu|(\eta) \lesssim  \frac{r^{2\beta}}{\alpha^{2\beta}} \frac{(\alpha +1)^{2(\beta +\rho)}}{\left(2^{j-1}\right)^{2(\beta+\rho)}r^{2(\beta+\rho)}} |\mu|\left(\B(\xi,2^jr)\right)\:, 
\end{eqnarray*}
where the implicit constant only depends on $\beta$ and the intrinsic geometry of $\X$. Now as $\alpha>1$, the right hand side can be further bounded by,
\begin{equation*}
I_{\beta,j}(x) \lesssim \frac{\alpha^h}{\left(2^{2\beta}\right)^{j}(2^jr)^{h}} |\mu|\left(\B(\xi,2^jr)\right)\:.
\end{equation*} 
with the implicit constant depends only on $\beta$ and the intrinsic geometry of $\X$. Finally by (\ref{upper_bound_visual_measure}), we obtain
\begin{equation} \label{maximal_estimate_pf_eq3}
I_{\beta,j}(x) \lesssim \frac{1}{\left(2^{2\beta}\right)^{j}} \frac{|\mu|\left(\B(\xi,2^jr)\right)}{\lambda_o\left(\B(\xi,2^jr)\right)} \le \frac{1}{\left(2^{2\beta}\right)^{j}}\:\M_{HL}\mu(\xi)\:,
\end{equation}
where the implicit constant depends only on $\alpha$ and the intrinsic geometry of $\X$.

Now summing up the estimates (\ref{maximal_estimate_pf_eq1}) and (\ref{maximal_estimate_pf_eq3}), the result follows. 
\end{proof}

We next introduce the notion of strong derivative of measures. 
\begin{definition} \label{Sec3_defn7}
The strong derivative of a Radon measure $\mu$ on $\partial \X$ is defined as,
\begin{equation*}
D\mu(\xi):= \lim_{r \to 0+} \frac{\mu\left(\B(\xi,r)\right)}{\lambda_o\left(\B(\xi,r)\right)}\:,\:\:\xi \in \partial \X\:,
\end{equation*}
provided the limit exists.
\end{definition}

Our next lemma shows that strong derivative of singular measures vanish almost everywhere:
\begin{lemma} \label{Sec3_lemma8}
If $\mu$ is a Radon measure on $\partial \X$ which is singular with respect to $\lambda_o$, then 
\begin{equation*}
D\mu=0\:,\:\:\lambda_o\text{-a.e. on } \partial \X\:. 
\end{equation*}
\end{lemma}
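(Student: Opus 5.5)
Since $\mu$ and $\lambda_o$ are mutually singular, there is a Borel set $S \subset \partial \X$ with $\lambda_o(S)=0$ and $\mu(\partial \X \setminus S)=0$. Because $\mu \ge 0$, the lower limit of the ratio $\mu(\B(\xi,r))/\lambda_o(\B(\xi,r))$ is always nonnegative. It therefore suffices to show, for each $t>0$, that the set
\begin{equation*}
A_t:=\left\{\xi \in \partial \X \setminus S : \limsup_{r \to 0+} \frac{\mu\left(\B(\xi,r)\right)}{\lambda_o\left(\B(\xi,r)\right)} > t\right\}
\end{equation*}
is $\lambda_o$-null. Then $D\mu=0$ on $\partial \X \setminus \left(S \cup \bigcup_k A_{1/k}\right)$, which is a set of full $\lambda_o$-measure.

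To bound $\lambda_o(A_t)$, fix $\varepsilon>0$. Since $\mu$ is a finite Radon measure on the compact metrizable space $\partial \X$, inner regularity gives a compact $K \subset S$ with $\mu(\partial \X \setminus K)<\varepsilon$. Now $K$ is closed and $A_t \cap K=\emptyset$. So each $\xi \in A_t$ has positive visual distance from $K$: by continuity of the Gromov product on $\partial \X \times \partial \X$ and compactness of $K$, $\inf_{\eta \in K}\nu(\xi,\eta)>0$. Consequently, all sufficiently small balls $\B(\xi,r)$ are disjoint from $K$. Set $\mu_\varepsilon:=\mu|_{\partial \X \setminus K}$. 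For $\xi \in A_t$ and small $r$ we have $\mu(\B(\xi,r))=\mu_\varepsilon(\B(\xi,r))$, hence $\M_{HL}\mu_\varepsilon(\xi)>t$. Lemma \ref{Sec3_lemma1} then yields
\begin{equation*}
\lambda_o(A_t) \le \lambda_o\left(\left\{\M_{HL}\mu_\varepsilon>t\right\}\right) \lesssim \frac{\mu_\varepsilon(\partial \X)}{t} < \frac{\varepsilon}{t}\:.
\end{equation*}
Letting $\varepsilon \to 0$ gives $\lambda_o(A_t)=0$. Measurability of $A_t$ is not an issue, because the estimate holds for the outer measure. Alternatively, one can reduce to rational radii using monotonicity of $r\mapsto\mu(\B(\xi,r))$ together with the doubling of $\lambda_o$.

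The main obstacle I expect is the step asserting that small visual balls around $\xi \notin K$ avoid $K$. Since $\nu$ is only a quasi-metric, visual balls need not be open in the cone topology. I would handle this directly from the quasi-triangle inequality (\ref{four_pt_condition'}). Set $d_0:=\inf_{\eta\in K}\nu(\xi,\eta)$, which is positive by compactness and continuity. If $\zeta \in \B(\xi,r)\cap K$, then $\nu(\xi,\zeta)<r$. For $r<d_0$ this contradicts $\nu(\xi,\zeta)\ge d_0$, because $\zeta\in K$. So in fact no quasi-triangle inequality is needed, and the argument closes. The only genuine input is then Lemma \ref{Sec3_lemma1}, which uses the $h$-doubling hypothesis.
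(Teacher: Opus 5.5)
Your proof is correct and is essentially the paper's argument. The paper gets an open $U\supset E$ with $\mu(U)<\varepsilon$ by outer regularity and reruns the Vitali covering with balls $\B(\xi,r_\xi)\subset U$, whereas you take $U=\partial\X\setminus K$ and invoke Lemma \ref{Sec3_lemma1} for $\mu|_U$. Your explicit removal of the carrier $S$ is in fact what justifies the paper's step ``$\mu(E)=0$'', which fails as literally stated (any atom of $\mu$ lies in $E$), so your version is slightly more careful.
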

\begin{proof}
We argue by contradiction. If the conclusion of Lemma \ref{Sec3_lemma8} is not true then there exists a positive constant $a$ such that the Borel set
\begin{equation*}
E:=\left\{\xi \in \partial \X : \limsup_{r \to 0+} \frac{\mu\left(\B(\xi,r)\right)}{\lambda_o\left(\B(\xi,r)\right)} > a \right\}\:,
\end{equation*}
satisfies $\lambda_o(E)>0$. Now as $\mu$ is singular with respect to $\lambda_o$, we have that $\mu(E)=0$. Let $\varepsilon >0$. Then there exists an open set $U$ such that $E \subset U$ with $\mu(U)<\varepsilon$. Now for every $\xi \in E$, there exists $r_\xi \in (0,1)$ such that
\begin{equation} \label{Sec3_Lemma8_eq1}
\mu\left(\B(\xi,r_\xi)\right) >a\:\lambda_o\left(\B(\xi,r_\xi)\right)\:,\text{ with } \B(\xi,r_\xi) \subset U\:.
\end{equation}
Again by Vitali $5$-covering Lemma for quasi-metric spaces, there exist countably many visual balls $\{\mathscr{B}\left(\xi_j, r_{\xi_j}\right)\}_{j=1}^\infty$ satisfying (\ref{Sec3_Lemma8_eq1}) such that
\begin{itemize}
\item $\mathscr{B}\left(\xi_j, r_{\xi_j}\right) \cap \mathscr{B}\left(\xi_k, r_{\xi_k}\right) = \emptyset$ for all $j \ne k$,
\item $E \subset \displaystyle\bigcup_{j=1}^\infty \mathscr{B}\left(\xi_j, 5e^{4\delta}r_{\xi_j}\right)$ \:.
\end{itemize}
Utilizing the $h$-doubling hypothesis, we now have
\begin{equation*}
\lambda_o(E) \le \sum_{j=1}^\infty \lambda_o\left(\mathscr{B}\left(\xi_j, 5e^{4\delta}r_{\xi_j}\right)\right) \lesssim \sum_{j=1}^\infty \lambda_o\left(\mathscr{B}\left(\xi_j, r_{\xi_j}\right)\right)\:. 
\end{equation*} 
Then by (\ref{Sec3_Lemma8_eq1}),
\begin{eqnarray*}
\sum_{j=1}^\infty \lambda_o\left(\mathscr{B}\left(\xi_j, r_{\xi_j}\right)\right) < \frac{1}{a} \sum_{j=1}^\infty \mu\left(\mathscr{B}\left(\xi_j, r_{\xi_j}\right)\right) &=& \frac{1}{a}\:\mu\left(\displaystyle\bigcup_{j=1}^\infty \B(\xi_j,r_{\xi_j})\right)\\
&\le &  \frac{1}{a}\: \mu\left(U\right) \\
&<& \frac{\varepsilon}{a}\:.
\end{eqnarray*}
Now as $\varepsilon >0$ was arbitrary, we get that $\lambda_o(E)=0$, which is a contradiction.
\end{proof}

We now consider boundary limits of generalized Poisson integrals where the strong derivative of the boundary measure vanishes:
\begin{lemma} \label{Sec3_lemma9}
Let $\beta>0$. If $\mu$ is a Radon measure on $\partial \X$ such that for some $\xi \in \partial \X$, $D\mu(\xi)=0,$ then the non-tangential limit of  $e^{-(\beta - \rho)d(o,\cdot)}\PP_{i\beta}[\mu]$ exists and equals $0$ at $\xi$.
\end{lemma}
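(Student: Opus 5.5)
We reuse the annular decomposition from the proof of Lemma \ref{maximal_estimate_lemma}, this time keeping the local ratios $\mu(\B(\xi,2^jr))/\lambda_o(\B(\xi,2^jr))$ instead of bounding them by the global maximal function. Fix $\alpha>1$ and $\varepsilon>0$. Since $\mu$ is a (positive) Radon measure and $D\mu(\xi)=0$, there is $r_0\in(0,1)$ such that
\begin{equation*}
\frac{\mu\left(\B(\xi,s)\right)}{\lambda_o\left(\B(\xi,s)\right)}<\varepsilon\:,\quad 0<s<r_0\:.
\end{equation*}
For $x\in\Gamma_\alpha(\xi)$ set $r=\alpha e^{-d(o,x)}$, so $r\to0$ as $x\to\xi$ within the cone. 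Let $E_0,\dots,E_m$ and $I_{\beta,j}(x)$ be as in that proof, with $|\mu|=\mu$.

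Let $J$ be the largest integer with $2^Jr<r_0$. For the terms $0\le j\le J$, estimates (\ref{maximal_estimate_pf_eq1}) and (\ref{maximal_estimate_pf_eq3}) show that the maximal function $\M_{HL}\mu(\xi)$ can be replaced by the ratio at scale $2^jr$. This gives
\begin{equation*}
\sum_{j=0}^{J} I_{\beta,j}(x)\lesssim \varepsilon\sum_{j\ge0}2^{-2\beta j}\lesssim_\beta \varepsilon\:.
\end{equation*}
The geometric factor $2^{-2\beta j}$ is what makes this sum converge, and its constant is uniform in $x$.

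For the remaining terms $J<j\le m$, we have $2^jr\ge r_0$, and we use the cruder bound from the proof of Lemma \ref{maximal_estimate_lemma}:
\begin{equation*}
I_{\beta,j}(x)\lesssim \frac{\alpha^h}{2^{2\beta j}(2^jr)^h}\:\mu(\partial\X)\le \frac{\alpha^h\,\mu(\partial\X)}{r_0^{\,h}}\:2^{-2\beta j}\:.
\end{equation*}
Summing over $j>J$ gives at most $C(\alpha,\beta)\,r_0^{-h}\,\mu(\partial\X)\,2^{-2\beta J}$. Since $2^{J+1}r\ge r_0$, we have $2^{-J}\le 2r/r_0$, so this tail is $O\big(r_0^{-h-2\beta}\,r^{2\beta}\big)$. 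The tail therefore tends to $0$ as $r\to0$, that is, as $x\to\xi$ in $\Gamma_\alpha(\xi)$.

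Combining the two ranges, $\limsup |e^{-(\beta-\rho)d(o,x)}\PP_{i\beta}[\mu](x)|\lesssim\varepsilon$ along $\Gamma_\alpha(\xi)$. Letting $\varepsilon\to0$ gives the non-tangential limit $0$, since $\alpha>1$ was arbitrary.

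The main technical point is the tail: the split index $J$ must be chosen so that the far annuli contribute an amount vanishing with $r$. This relies on the $2^{-2\beta j}$ decay, which comes from $\beta>0$, together with the upper bound (\ref{upper_bound_visual_measure}). No doubling is needed here.
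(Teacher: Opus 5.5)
Your proof is correct and is essentially the paper's argument. The paper splits $\mu=\mu|_{\B(\xi,r_0)}+\mu_1$, observes that $\M_{HL}\bigl(\mu|_{\B(\xi,r_0)}\bigr)(\xi)$ is at most $\varepsilon$ so that Lemma \ref{maximal_estimate_lemma} controls that piece, and bounds the kernel of $\mu_1$ on $\Gamma_\alpha(\xi)$ by $C\,r_0^{-2(\beta+\rho)}e^{-2\beta d(o,x)}$; you make the same near/far split on the dyadic annuli inside the proof of Lemma \ref{maximal_estimate_lemma} instead of on the measure, and your $O\bigl(r_0^{-h-2\beta}r^{2\beta}\bigr)$ tail is exactly that $e^{-2\beta d(o,x)}$ decay.
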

\begin{proof}
Let $\varepsilon>0$ be arbitrary. Since, $D\mu(\xi)=0$, there exists $r_0>0$ such that for all $r \in (0,r_0)$,
\begin{equation} \label{Sec3_lemma9_eq1}
\mu\left(\B(\xi,r)\right) < \varepsilon\: \lambda_o \left(\B(\xi,r)\right)\:.
\end{equation}
Letting $\B_0=\B(\xi,r_0)$, we set
\begin{equation*}
\mu_0:= \mu|_{\B_0}\:\:\:\text{   and     } \mu_1:=\mu-\mu_0\:.
\end{equation*} 
If $\eta \in \partial \X \setminus \B_0$, then 
\begin{equation} \label{Sec3_lemma9_eq2}
e^{-(\xi|\eta)_o} = \nu(\xi,\eta) \ge r_0\:.
\end{equation}
Also as in the proof of Lemma \ref{maximal_estimate_lemma}, for $x \in \Gamma_\alpha(\xi),\:\alpha >1$, we have
\begin{equation} \label{Sec3_lemma9_eq3}
e^{-(\xi|\eta)_o} < e^{2\delta} (\alpha +1)e^{-(x|\eta)_o}\:.
\end{equation}
Thus for $x \in \Gamma_\alpha(\xi)$ and $\eta \in \partial \X \setminus \B_0$, expanding the Busemann function into distance and the Gromov product and then plugging in (\ref{Sec3_lemma9_eq2}) and (\ref{Sec3_lemma9_eq3}), we obtain
\begin{eqnarray*}
e^{-(\beta - \rho)d(o,x)}P_{i\beta}(x,\eta)= C_\beta\:e^{-2\beta d(o,x)}  e^{2(\beta+\rho)(x|\eta)_o} < C_\beta\:\left(\frac{e^{2\delta} (\alpha +1)}{r_0}\right)^{2(\beta+\rho)}\:e^{-2\beta d(o,x)}\:,
\end{eqnarray*}
and thus the non-tangential limit of $e^{-(\beta - \rho)d(o,\cdot)}\PP_{i\beta}[\mu_1]$ vanishes at $\xi$. On the other hand, by (\ref{Sec3_lemma9_eq1}), we note that $\M_{HL}\mu_0 < \varepsilon$. Thus by Lemma \ref{maximal_estimate_lemma}, for any $\alpha>1$, 
\begin{equation*}
\limsup_{\substack{x \to \xi \\ x \in \Gamma_\alpha(\xi)}} e^{-(\beta - \rho)d(o,x)}\PP_{i\beta}[\mu_0](x) \lesssim \varepsilon\:,
\end{equation*}
where the implicit constant depends only on $\alpha, \beta$ and the intrinsic geometry of $\X$\:. The result now follows.
\end{proof}
This brings us to,
\begin{lemma} \label{Sec3_lemma10}
Let $\beta >0$.
\begin{itemize}
\item[(i)] If $f \in L^1(\partial \X)$, then the non-tangential limit of $e^{-(\beta - \rho)d(o,\cdot)}\PP_{i\beta}[f]$ exists and equals $f$ at $\lambda_o$-a.e. on $\partial \X$.
\item[(ii)] If $\mu$ is a complex measure on $\partial \X$ which is singular with respect to $\lambda_o$, then the non-tangential limit of $e^{-(\beta - \rho)d(o,\cdot)}\PP_{i\beta}[\mu]$ exists and equals $0$ at $\lambda_o$-a.e. on $\partial \X$.
\end{itemize}
\end{lemma}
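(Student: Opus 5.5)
For part (i), I will reduce to Lemma \ref{Sec3_lemma9} by subtracting off the value at a Lebesgue point. Fix $\xi\in\partial\X$ which is a Lebesgue point of $f$ in the sense of Remark \ref{Sec3_remark4}. By Corollary \ref{Sec3_corollary3}, these points form a set of full $\lambda_o$-measure. The approximate identity property (\ref{approx_identity}) gives
\begin{equation*}
e^{-(\beta-\rho)d(o,x)}\PP_{i\beta}[f](x)-f(\xi)=e^{-(\beta-\rho)d(o,x)}\PP_{i\beta}[g_\xi](x)\:,
\end{equation*}
where $g_\xi:=f-f(\xi)\in L^1(\partial\X)$, since $\lambda_o$ is a finite measure. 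It therefore suffices to show that the right-hand side tends to $0$ non-tangentially at $\xi$. The generalized Poisson kernel is positive, so
\begin{equation*}
\left|e^{-(\beta-\rho)d(o,x)}\PP_{i\beta}[g_\xi](x)\right|\le e^{-(\beta-\rho)d(o,x)}\PP_{i\beta}[\mu](x)\:,\qquad d\mu:=|f-f(\xi)|\,d\lambda_o\:.
\end{equation*}
Here $\mu$ is a positive finite Radon measure. Because $\xi$ is a Lebesgue point, we have $D\mu(\xi)=\lim_{r\to0+}\frac{1}{\lambda_o(\B(\xi,r))}\int_{\B(\xi,r)}|f(\eta)-f(\xi)|\,d\lambda_o(\eta)=0$. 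Lemma \ref{Sec3_lemma9} then shows that the non-tangential limit of $e^{-(\beta-\rho)d(o,\cdot)}\PP_{i\beta}[\mu]$ at $\xi$ is $0$, which proves (i).

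For part (ii), I will first decompose the complex singular measure as $\mu=\mu_1-\mu_2+i(\mu_3-\mu_4)$. Each $\mu_k$ is a positive Radon measure singular with respect to $\lambda_o$; for instance, one may take the Jordan decompositions of the real and imaginary parts, whose pieces are dominated by $|\mu|$ and hence are singular. By Lemma \ref{Sec3_lemma8}, for each $k$ we have $D\mu_k=0$ outside a $\lambda_o$-null set $N_k$. At every $\xi\notin\bigcup_k N_k$, Lemma \ref{Sec3_lemma9} gives non-tangential limit $0$ for each weighted integral $e^{-(\beta-\rho)d(o,\cdot)}\PP_{i\beta}[\mu_k]$. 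By linearity, the same holds for $\mu$. Alternatively, one can apply the two lemmas directly to $|\mu|$ and use $|\PP_{i\beta}[\mu]|\le\PP_{i\beta}[|\mu|]$.

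The only point needing care is that Lemma \ref{Sec3_lemma9} is stated for positive Radon measures. This is why, in (i), I will dominate by $|f-f(\xi)|\,d\lambda_o$ rather than apply the lemma to the complex measure $(f-f(\xi))\,d\lambda_o$ itself. Once that domination is in place, I expect no genuine obstacle; the remaining steps are bookkeeping that combines (\ref{approx_identity}), Corollary \ref{Sec3_corollary3}, and Lemmas \ref{Sec3_lemma8} and \ref{Sec3_lemma9}.
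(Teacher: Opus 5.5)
Your proposal is correct and matches the paper's proof: in (i) the paper also fixes a Lebesgue point $\xi$, dominates by the measure $|f-f(\xi)|\,d\lambda_o$ (which has $D\mu_\xi(\xi)=0$), and applies Lemma \ref{Sec3_lemma9}. In (ii) the paper takes exactly your alternative route, applying Lemmas \ref{Sec3_lemma8} and \ref{Sec3_lemma9} to $|\mu|$ and then using $|\PP_{i\beta}[\mu]|\le\PP_{i\beta}[|\mu|]$; your four-piece Jordan decomposition is also valid, just a longer path to the same conclusion.
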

\begin{proof}
We first prove part (ii). Since $\mu$ is singular with respect to $\lambda_o$, so is its total variation $|\mu|$. Then by Lemma \ref{Sec3_lemma8}, we get that $D|\mu|=0,\:\lambda_o$-a.e. on $\partial \X$. Thus by Lemma \ref{Sec3_lemma9}, the non-tangential limit of $e^{-(\beta - \rho)d(o,\cdot)}\PP_{i\beta}[|\mu|]$ exists and equals $0$ at $\lambda_o$-a.e. on $\partial \X$. The result now follows as $$\left|e^{-(\beta - \rho)d(o,\cdot)}\PP_{i\beta}[\mu]\right| \le e^{-(\beta - \rho)d(o,\cdot)}\PP_{i\beta}[|\mu|]\:.$$

\medskip

We now focus on (i). For $f \in L^1(\partial \X)$, by the Lebesgue differentiation theorem (Corollary \ref{Sec3_corollary3}), we have
\begin{equation*}
\lim_{r \to 0+}\frac{1}{\lambda_o\left(\B(\xi,r)\right)} \int_{\B(\xi,r)} |f(\eta)-f(\xi)|\:d\lambda_o(\eta)=0\:,\:\:\lambda_o\text{-a.e. } \xi \in \partial \X\:.
\end{equation*}
We fix  any such $\xi$ and define a measure $\mu_\xi$ on $\partial \X$ by,
\begin{equation*}
\mu_\xi(E):= \int_{E} |f(\eta)-f(\xi)|\:d\lambda_o(\eta)\:,
\end{equation*}
for all Borel $E \subset \partial \X$. Then $D\mu_\xi(\xi)=0$ and 
\begin{equation*}
\left|e^{-(\beta - \rho)d(o,x)}\PP_{i\beta}[f](x)-f(\xi)\right| \le e^{-(\beta - \rho)d(o,x)}\PP_{i\beta}[\mu_\xi](x)\:.
\end{equation*}
The result now follows from Lemma \ref{Sec3_lemma9}. This completes the proof of Lemma \ref{Sec3_lemma10}.
\end{proof}

We now conclude this section by completing the proof of Theorem \ref{Fatou_thm}.
\begin{proof}[Proof of Theorem \ref{Fatou_thm}]
Theorem \ref{Fatou_thm} follows at once from Lemma \ref{Sec3_lemma10} by considering the Lebesgue-Radon-Nikodym decomposition (see \cite[Theorem 6.10]{Rudin}) of $\mu$ with respect to $\lambda_o$.
\end{proof}

\section{Exceptional sets for radial limits}
\label{sec4}
In this section, we prove Theorems \ref{exceptional_thm} and \ref{exceptional_sharp_thm} in Subsections \ref{subsec4.2} and \ref{subsec4.3} respectively. The key to this analysis is the estimates of a truncated maximal function, which we prove now in Subsection \ref{subsec4.1}. 

\subsection{Estimates of a truncated Maximal Function}
\label{subsec4.1}
Let $0< r_1 < r_2 \le 1$ and $\xi \in \partial \X$. Then for a complex measure $\mu$ on $\partial \X$, we consider the following truncated maximal function:
\begin{equation} \label{maximal_fn}
\M_{r_1,r_2}[\mu](\xi) := \displaystyle\sup_{r_1 \le r \le r_2} \frac{|\mu|(\mathscr{B}(\xi,r))}{r^h} \:.
\end{equation}
When $d\mu = f d\lambda_o$ for some suitable function $f$ on $\partial \X$, we will denote the corresponding maximal function simply by $\M_{r_1,r_2}[f]$. 

\medskip

Next we see an estimate relating the weighted generalized Poisson integral of a complex measure, along radial geodesic rays, with the maximal function corresponding to the boundary measure. 
\begin{lemma} \label{maximal_fn_lem}
Let $\beta>0,\:\tau \ge 1,\: 0<\varepsilon \le 1$ and $\mu$ be a complex measure on $\partial \X$. Then for all $t > \log(\tau / \varepsilon)$, one has for all $\xi \in \partial \X$,
\begin{equation} \label{maximal_fn_ineq}
\left|e^{-(\beta - \rho)t}\PP_{i\beta}[\mu](\gamma_\xi(t))\right| \lesssim \left\{e^{ht}|\mu|\left(\mathscr{B}\left(\xi,\tau e^{-t}\right)\right) + \frac{\M_{\tau e^{-t},\varepsilon}[\mu](\xi)}{\tau^{2\beta}}  + \frac{e^{-2\beta t}}{\varepsilon^{h+2\beta}}|\mu|\left(\partial \X\right)\right\} \:,
\end{equation}
where the implicit constant depends only on $\beta$ and the intrinsic geometry of $\X$.
\end{lemma}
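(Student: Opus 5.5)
Since $x=\gamma_\xi(t)$ lies on the geodesic ray towards $\xi$, we have $(x|\xi)_o=t=d(o,x)$, so $x$ lies in every cone $\Gamma_\alpha(\xi)$. The plan is to copy the dyadic decomposition in the proof of Lemma \ref{maximal_estimate_lemma}, with $r:=\tau e^{-t}$ (which is $<\varepsilon\le 1$ by the hypothesis $t>\log(\tau/\varepsilon)$), but to stop the dyadic annuli at scale $\varepsilon$ and treat the remainder crudely. First bound the kernel: writing $B_\eta(x)=d(o,x)-2(x|\eta)_o$,
\begin{equation*}
e^{-(\beta-\rho)t}\left|\PP_{i\beta}[\mu](\gamma_\xi(t))\right| \le C_\beta\, e^{-2\beta t}\int_{\partial \X} e^{2(\beta+\rho)(\gamma_\xi(t)|\eta)_o}\, d|\mu|(\eta)\:.
\end{equation*}
Two facts about the Gromov product are needed. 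The first is $(\gamma_\xi(t)|\eta)_o\le t$. The second comes from (\ref{four_pt_condition'}) exactly as in (\ref{maximal_estimate_pf_eq2}), with $e^{-(x|\xi)_o}=e^{-t}$: it gives $e^{-(\xi|\eta)_o}\le 2e^{2\delta}e^{-(\gamma_\xi(t)|\eta)_o}$, i.e.
\begin{equation*}
e^{(\gamma_\xi(t)|\eta)_o}\lesssim \nu(\xi,\eta)^{-1}.
\end{equation*}

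Split $\partial\X$ into three regions: $\B(\xi,r)$, the annuli $E_j=\B(\xi,2^jr)\setminus\B(\xi,2^{j-1}r)$ for $1\le j\le m$ where $m$ is the largest integer with $2^m r\le \varepsilon$, and the remainder $\partial\X\setminus\B(\xi,2^m r)$.

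On $\B(\xi,r)$, the first fact bounds the integrand by $e^{2(\beta+\rho)t}$, so this piece contributes at most $e^{ht}|\mu|(\B(\xi,\tau e^{-t}))$, which is the first term in (\ref{maximal_fn_ineq}).

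On $E_j$ the second fact gives $e^{2(\beta+\rho)(\gamma_\xi(t)|\eta)_o}\lesssim (2^{j-1}r)^{-(h+2\beta)}$. Since $r e^{t}=\tau$, the contribution of $E_j$ is
\begin{equation*}
\lesssim e^{-2\beta t}\,(2^jr)^{-2\beta}\,\frac{|\mu|(\B(\xi,2^jr))}{(2^jr)^h}\le \tau^{-2\beta}\,2^{-2\beta j}\,\M_{\tau e^{-t},\varepsilon}[\mu](\xi),
\end{equation*}
where we use that $2^jr\in[r,\varepsilon]$, so the radius lies in the admissible range of the truncated maximal function (\ref{maximal_fn}). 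Summing the geometric series in $j$ gives the second term.

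On the remainder, $\nu(\xi,\eta)\ge 2^m r>\varepsilon/2$, so the integrand is $\lesssim\varepsilon^{-(h+2\beta)}$. This piece therefore contributes $\lesssim e^{-2\beta t}\varepsilon^{-(h+2\beta)}|\mu|(\partial\X)$, which is the third term.

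No step is a real obstacle. The only bookkeeping is making sure the radii used in the annuli stay inside $[\tau e^{-t},\varepsilon]$, and absorbing the factors $2^{h+2\beta}$ and $e^{2\delta(h+2\beta)}$ into constants that depend only on $\beta$ and $\delta$.
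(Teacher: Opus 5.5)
Your proposal is correct and follows the paper's proof essentially step for step. It uses the same three-piece decomposition into $\mathscr{B}(\xi,\tau e^{-t})$, the dyadic annuli up to the largest scale $2^m\tau e^{-t}\le\varepsilon$, and the remainder where $\nu(\xi,\eta)>\varepsilon/2$, and it uses the same two Gromov-product facts, namely $(\gamma_\xi(t)|\eta)_o\le t$ and the four-point bound $e^{(\gamma_\xi(t)|\eta)_o}\lesssim \nu(\xi,\eta)^{-1}$, to produce the three terms.
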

\begin{proof}
Fix $\xi \in \partial \X$ and $t > \log(\tau / \varepsilon)$. Then note that $\tau e^{-t} < \varepsilon$. Hence there exists a largest non-negative integer $m$ such that
\begin{equation*}
2^m \tau e^{-t} \le \varepsilon \:.
\end{equation*}
Let 
\begin{eqnarray*}
&E_0& = \mathscr{B}\left(\xi,\tau e^{-t}\right) \:, \\
&E_j& = \mathscr{B}\left(\xi, 2^j \tau e^{-t}\right) \setminus \mathscr{B}\left(\xi, 2^{j-1} \tau e^{-t}\right) ,\:\text{for } 1 \le j \le m \:,\\
&E_{m+1}& = \partial \X \setminus \mathscr{B}\left(\xi, 2^m \tau e^{-t}\right) \:.
\end{eqnarray*}
Now 
\begin{equation*}
\left|e^{-(\beta - \rho)t}\PP_{i\beta}[\mu](\gamma_\xi(t))\right| \le C_\beta \displaystyle\sum_{j=0}^{m+1} I_{\beta,j}(t) \:,
\end{equation*}
where 
\begin{equation*}
I_{\beta,j}(t) = e^{-(\beta-\rho)t}\int_{E_j} e^{-(\beta+\rho)B_{\eta}\left(\gamma_{\xi}(t)\right)} \: d|\mu|(\eta) \:,\:\text{for } 0 \le j \le m+1\:.
\end{equation*}
As by the triangle inequality, $-B_{\eta}\left(\gamma_{\xi}(t)\right) \le t$, we have 
\begin{equation*}
I_{\beta,0}(t) \le  e^{ht} \:|\mu|\left(\mathscr{B}\left(\xi,\tau e^{-t}\right)\right) \:. 
\end{equation*}

Next for $1 \le j \le m$, by proceeding as in the proof of Lemma \ref{maximal_estimate_lemma}, we get that for $\eta \in E_j$,
\begin{equation*}
2^{j-1} \tau e^{-t} \le \nu (\xi,\eta) = e^{-(\xi|\eta)_o} \le e^{2\delta} \left(e^{-(\gamma_\xi(t)|\xi)_o}\:+\:e^{-(\gamma_\xi(t)|\eta)_o}\right)\:.
\end{equation*}
Now as $(\gamma_\xi(t)|\xi)_o=t$ and by triangle inequality, $(\gamma_\xi(t)|\eta)_o \le t$, we get that
\begin{equation*}
2^{j-1} \tau e^{-t} \le e^{2\delta} \left(e^{-t}\:+\:e^{-(\gamma_\xi(t)|\eta)_o}\right) \le 2\:e^{2\delta}\:e^{-(\gamma_\xi(t)|\eta)_o}\:,
\end{equation*}
that is,
\begin{equation} \label{maximal_fn_ineq_eq1}
e^{(\gamma_\xi(t)|\eta)_o} \le \frac{2\:e^{2\delta}}{2^{j-1} \tau e^{-t}}\:.
\end{equation}
Then again proceeding as in the proof of Lemma \ref{maximal_estimate_lemma}, by expanding the Busemann function into distance and the Gromov product and then plugging in (\ref{maximal_fn_ineq_eq1}), we obtain
\begin{eqnarray*}
I_{\beta,j}(t) &\lesssim & \frac{e^{-2\beta t}}{\left(2^{j-1} \tau e^{-t}\right)^{2(\beta +\rho)}}\: |\mu|\left(\B(\xi,2^j \tau e^{-t})\right) \\
&\lesssim & \frac{|\mu|\left(\B(\xi,2^j \tau e^{-t})\right)}{(2^j)^{2\beta} \tau^{2\beta}(2^j\tau e^{-t})^h} \\
& \le & \frac{1}{(2^j)^{2\beta}} \frac{\M_{\tau e^{-t},\varepsilon}[\mu](\xi)}{\tau^{2\beta}}\:,
\end{eqnarray*}
where the implicit constants depend only on $\beta$ and the intrinsic geometry of $\X$. Therefore, 
\begin{equation*}
\displaystyle\sum_{j=1}^m I_{\beta,j}(t) \le \left(\displaystyle\sum_{j=1}^m \frac{1}{{\left(2^{j}\right)}^{2\beta}}\right) \frac{\M_{\tau e^{-t},\varepsilon}[\mu](\xi)}{\tau^{2\beta}} \lesssim  \frac{\M_{\tau e^{-t},\varepsilon}[\mu](\xi)}{\tau^{2\beta}}\:.
\end{equation*}
Repeating the same argument as above, we get
\begin{equation} \label{maximal_fn_ineq_last_eq2}
I_{\beta, m+1}(t) \lesssim \int_{E_{m+1}} \frac{e^{-2\beta t}}{{\left(2^{m} \tau e^{-t}\right)}^{2(\beta+\rho)}} \:d|\mu|(\eta) \:.
\end{equation}
Now by the choice of $m$, 
\begin{equation*}
2^m \tau e^{-t} > \frac{\varepsilon}{2} \:.
\end{equation*}
Plugging the above in (\ref{maximal_fn_ineq_last_eq2}), it follows that
\begin{equation*}
I_{\beta,m+1}(t) \lesssim  \frac{ e^{-2\beta t}}{\varepsilon^{h+2\beta}}\:|\mu|\left(\partial \X\right) \:,
\end{equation*}
where the implicit constant depends only on $\beta$ and the intrinsic geometry of $\X$. Then summing up the above estimates, we get (\ref{maximal_fn_ineq}).
\end{proof}
Lemma \ref{maximal_fn_lem} has the following consequences.
\begin{corollary} \label{cor1}
Let $\beta>0,\:0<\varepsilon \le 1$ and $\mu$ be a complex measure on $\partial \X$. Then for all $t > \log(1 / \varepsilon)$, one has for all $\xi \in \partial \X$,
\begin{equation*} 
\left|e^{-(\beta - \rho)t}\PP_{i\beta}[\mu](\gamma_\xi(t))\right| \lesssim  \left\{ 2\M_{ e^{-t},\varepsilon}[\mu](\xi)  + \frac{e^{-2\beta t}}{\varepsilon^{h+2\beta}} |\mu|(\partial \X) \right\} \:,
\end{equation*}
where the implicit constant depends only on $\beta$ and the intrinsic geometry of $\X$.
\end{corollary}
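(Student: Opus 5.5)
The plan is to specialize Lemma \ref{maximal_fn_lem} to $\tau=1$ and to absorb its first term into the truncated maximal function. With $\tau=1$, the hypothesis $t>\log(\tau/\varepsilon)$ becomes exactly $t>\log(1/\varepsilon)$, so the lemma applies for all such $t$ and all $\xi\in\partial\X$. It gives
\begin{equation*}
\left|e^{-(\beta - \rho)t}\PP_{i\beta}[\mu](\gamma_\xi(t))\right| \lesssim e^{ht}|\mu|\left(\B\left(\xi,e^{-t}\right)\right) + \M_{e^{-t},\varepsilon}[\mu](\xi) + \frac{e^{-2\beta t}}{\varepsilon^{h+2\beta}}|\mu|(\partial \X)\:,
\end{equation*}
with an implicit constant depending only on $\beta$ and the geometry of $\X$. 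The middle term has lost its factor $\tau^{-2\beta}$, since that factor equals $1$.

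Next I will handle the first term. Set $r=e^{-t}$. Then $e^{ht}|\mu|(\B(\xi,e^{-t}))=|\mu|(\B(\xi,r))/r^h$. Since $t>\log(1/\varepsilon)$, we have $r<\varepsilon$, so $r$ lies in the admissible range $[e^{-t},\varepsilon]$ of the supremum in (\ref{maximal_fn}). Hence this term is bounded by $\M_{e^{-t},\varepsilon}[\mu](\xi)$.

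Adding this to the middle term gives $2\M_{e^{-t},\varepsilon}[\mu](\xi)$, and together with the unchanged tail term this is the claimed inequality. Every step is a direct substitution into Lemma \ref{maximal_fn_lem}. The only point requiring any attention is checking that the radius $e^{-t}$ actually lies in the truncation interval $[e^{-t},\varepsilon]$. This holds precisely because of the assumption $t>\log(1/\varepsilon)$, which also guarantees the interval is nondegenerate.
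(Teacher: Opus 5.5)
Your proposal is correct and matches the paper's proof: the paper simply sets $\tau=1$ in Lemma \ref{maximal_fn_lem}. The factor $2$ in the statement comes from absorbing $e^{ht}|\mu|(\B(\xi,e^{-t}))$ into $\M_{e^{-t},\varepsilon}[\mu](\xi)$, and you justified this explicitly by noting that $e^{-t}<\varepsilon\le 1$ puts the radius $e^{-t}$ in the admissible range of the supremum.
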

\begin{proof}
The Corollary follows by taking $\tau=1$ in Lemma \ref{maximal_fn_lem}.
\end{proof}
\begin{corollary} \label{cor2}
Let $\xi \in \partial \X,\:\tau > 1$ and $t>\log(\tau)$. If $f$ is a non-negative measurable function on $\partial \X$ such that $f \equiv 1$ on $\mathscr{B}\left(\xi,\tau e^{-t}\right)$ and $f \le 1$ on $\partial \X$, then there exists $C_1>0$, depending only on $\beta$ and the intrinsic geometry of $\X$ such that
\begin{equation*}
e^{-(\beta - \rho)t}\PP_{i\beta}[f](\gamma_\xi(t)) \ge 1 - \frac{C_1}{\tau^{2\beta}} \:.
\end{equation*}
\end{corollary}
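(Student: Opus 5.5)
We will compare $f$ with the constant function $1$, using the normalization (\ref{approx_identity}) and the annular decomposition from the proof of Lemma \ref{maximal_fn_lem}. By (\ref{approx_identity}) at $x=\gamma_\xi(t)$, where $d(o,x)=t$, we have
\begin{equation*}
e^{-(\beta-\rho)t}\PP_{i\beta}[1](\gamma_\xi(t))=1 .
\end{equation*}
Since $f\equiv 1$ on $\B(\xi,\tau e^{-t})$ and $0\le f\le 1$ everywhere, the function $1-f$ is non-negative, bounded by $1$, and vanishes on $\B(\xi,\tau e^{-t})$. The kernel is positive, so
\begin{equation*}
e^{-(\beta-\rho)t}\PP_{i\beta}[f](\gamma_\xi(t)) = 1 - e^{-(\beta-\rho)t}\PP_{i\beta}[1-f](\gamma_\xi(t)) \ge 1 - C_\beta\, e^{-(\beta-\rho)t}\int_{\partial\X\setminus \B(\xi,\tau e^{-t})} e^{-(\beta+\rho)B_\eta(\gamma_\xi(t))}\,d\lambda_o(\eta).
\end{equation*}
It therefore suffices to show that the last integral term is at most $C_1\tau^{-2\beta}$.

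For that term, write the complement of $\B(\xi,\tau e^{-t})$ as a union of dyadic annuli
\begin{equation*}
E_j=\B(\xi,2^j\tau e^{-t})\setminus\B(\xi,2^{j-1}\tau e^{-t}),\qquad j\ge 1 .
\end{equation*}
This covers every $\eta$ with $\tau e^{-t}\le \nu(\xi,\eta)<\infty$; the $\eta$ with $\nu(\xi,\eta)\ge 1$ lie in annuli with $2^{j}\tau e^{-t}>1$, and there $\lambda_o(\partial\X)$ is finite, so one may simply let $j$ run to infinity. For $\eta\in E_j$, inequality (\ref{maximal_fn_ineq_eq1}) from the proof of Lemma \ref{maximal_fn_lem} gives
\begin{equation*}
e^{(\gamma_\xi(t)|\eta)_o}\le \frac{4e^{2\delta}}{2^{j}\tau e^{-t}} .
\end{equation*}
Expanding $B_\eta(\gamma_\xi(t))=t-2(\gamma_\xi(t)|\eta)_o$, the integrand over $E_j$ is bounded by
\begin{equation*}
e^{-2\beta t}\,(2^j\tau e^{-t})^{-2(\beta+\rho)} .
\end{equation*}
By (\ref{upper_bound_visual_measure}) we have $\lambda_o(E_j)\lesssim (2^j\tau e^{-t})^h$, and $h=2\rho$, so the contribution of $E_j$ is
\begin{equation*}
\lesssim e^{-2\beta t}(2^j\tau e^{-t})^{-2\beta}=(2^j\tau)^{-2\beta}.
\end{equation*}
Summing the geometric series over $j\ge1$ gives $\lesssim \tau^{-2\beta}$, with a constant depending only on $\beta,\delta,h$. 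This is exactly the required bound, with $C_1$ absorbing $C_\beta$.

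The only point needing care is the range of $j$. When $2^j\tau e^{-t}\ge1$, the bound (\ref{upper_bound_visual_measure}) is stated only for $r\in(0,1)$. There we instead use $\lambda_o(E_j)\le\lambda_o(\partial\X)=1\le (2^j\tau e^{-t})^h$, so the same estimate holds for every $j$. Equivalently, these $j$ can be grouped into a single tail term as in Lemma \ref{maximal_fn_lem}. Either way the bound is uniform in $\xi$ and $t$, and this yields the corollary.
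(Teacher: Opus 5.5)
Your proof is correct and is essentially the paper's argument: pass to $1-f$, use (\ref{approx_identity}), and bound $e^{-(\beta-\rho)t}\PP_{i\beta}[1-f](\gamma_\xi(t))$ by the dyadic annular decomposition around $\xi$. The only difference is that the paper invokes Lemma \ref{maximal_fn_lem} with $\varepsilon=1$ together with $\M_{\tau e^{-t},1}[\lambda_o](\xi)\lesssim 1$, whereas you inline that lemma's proof; inlining also makes explicit the tail term $e^{-2\beta t}|\mu|(\partial\X)\le \tau^{-2\beta}$ (valid since $t>\log\tau$), which the paper drops without comment.
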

\begin{proof}
Let $t>\log(\tau)$. We consider 
\begin{equation*}
g:= 1-f \:.
\end{equation*}
Then $g$ is a measurable, non-negative function on $\partial \X$ such that
\begin{equation*}
g \equiv 0 \text{ on } \mathscr{B}\left(\xi,\tau e^{-t}\right) \text{ and } g \le 1 \text{ on } \partial \X\:.
\end{equation*}
Then applying Lemma \ref{maximal_fn_lem}, for $d\mu = g\: d\lambda_o$ and $\varepsilon=1$, we get that there exists $C>0$, depending only on $\beta$ and the intrinsic geometry of $\X$ such that 
\begin{eqnarray} \label{cor2_eq}
e^{-(\beta - \rho)t}\PP_{i\beta}[g](\gamma_\xi(t)) &\le & C {\left(\frac{1}{\tau}\right)}^{2\beta} \M_{\tau e^{-t},1}[\mu](\xi) \nonumber\\
&\le & C {\left(\frac{1}{\tau}\right)}^{2\beta} \M_{\tau e^{-t},1}[\lambda_o](\xi)\:.
\end{eqnarray}
Now working with the definition (\ref{maximal_fn}), it follows by applying the estimate (\ref{upper_bound_visual_measure}),
\begin{equation*}
\M_{\tau e^{-t},1}[\lambda_o](\xi) = \displaystyle\sup_{\tau e^{-t} \le r \le 1} \frac{\lambda_o\left(\mathscr{B}(\xi,r)\right)}{r^h} \lesssim 1 \:.
\end{equation*}
Then plugging the above in (\ref{cor2_eq}), one has for some $C>0$, depending only on $\beta$ and the intrinsic geometry of $\X$ such that
\begin{equation*}
e^{-(\beta - \rho)t}\PP_{i\beta}[g](\gamma_\xi(t)) \le \frac{C}{\tau^{2\beta}} \:. 
\end{equation*}
Thus by (\ref{approx_identity}),
\begin{equation*}
e^{-(\beta - \rho)t}\PP_{i\beta}[f](\gamma_\xi(t)) = 1 - e^{-(\beta - \rho)t}\PP_{i\beta}[g](\gamma_\xi(t)) \ge 1 - \frac{C}{\tau^{2\beta}} \:. 
\end{equation*}
\end{proof}

\subsection{Upper bound on the Hausdorff dimension}
\label{subsec4.2}
\begin{proof}[Proof of Theorem \ref{exceptional_thm}]
For $L>0$, we set
\begin{equation} \label{defn_EL}
E^L_{\alpha}(\PP_{i\beta}[\mu]) :=\left\{\xi \in \partial \X : \displaystyle\limsup_{t \to +\infty} e^{-(\alpha + \beta -\rho) t} \left|\PP_{i\beta}[\mu]\left(\gamma_{\xi}(t)\right)\right| > L\right\} \:.
\end{equation}

\medskip

Our strategy will be to get some useful estimates on the $(h-\alpha)$\:-dimensional outer Hausdorff measure of the set defined in (\ref{defn_EL}). First we choose and fix $\varepsilon \in (0,1)$ and $\xi \in E^L_{\alpha}(\PP_{i\beta}[\mu])$. Then by Corollary \ref{cor1} there exists $C > 0$, depending only on $\beta$ and the intrinsic geometry of $\X$, such that
\begin{equation*}
CL < \displaystyle\limsup_{t \to +\infty} e^{-\alpha t}\: \M_{e^{-t}, \varepsilon}[\mu](\xi).
\end{equation*}
Hence, there exists $t_\xi \in (0,+\infty)$ satisfying $e^{-t_{\xi}} \le \varepsilon$ such that
\begin{equation} \label{poisson_pf_eq}
CL < e^{-\alpha t_\xi}\: \frac{|\mu|\left(\mathscr{B}\left(\xi, e^{-t_\xi}\right)\right)}{e^{-ht_\xi}} \:.
\end{equation}
Now by Vitali 5-covering Lemma for quasi-metric spaces, there exist countably many visual balls $\{\mathscr{B}\left(\xi_j, r_{\xi_j}\right)\}_{j=1}^\infty$ satisfying (\ref{poisson_pf_eq}) such that
\begin{itemize}
\item $r_{\xi_j} := e^{-t_{\xi_j}} \le \varepsilon$\:, for all $j \in \N$,
\item $\mathscr{B}\left(\xi_j, r_{\xi_j}\right) \cap \mathscr{B}\left(\xi_k, r_{\xi_k}\right) = \emptyset$ for all $j \ne k$,
\item $E^L_{\alpha}(\PP_{i\beta}[\mu]) \subset \displaystyle\bigcup_{j=1}^\infty \mathscr{B}\left(\xi_j, 5e^{4\delta}r_{\xi_j}\right)$ \:.
\end{itemize} 

Then by (\ref{poisson_pf_eq}), 
\begin{eqnarray*}
\displaystyle\sum_{j=1}^\infty {\left(\mathfrak{d}\left(\mathscr{B}\left(\xi_j, 5e^{4\delta}r_{\xi_j}\right)\right)\right)}^{h-\alpha}  & \lesssim & \left(\frac{1}{L}\right) \displaystyle\sum_{j=1}^\infty |\mu|\left(\mathscr{B}\left(\xi_j, r_{\xi_j}\right)\right) \\
&=&  \left(\frac{1}{L}\right)  |\mu|\left(\bigcup_{j=1}^\infty \mathscr{B}\left(\xi_j, r_{\xi_j}\right)\right) \\
&\le &  \left(\frac{1}{L}\right)  |\mu|(\partial \X) \:.
\end{eqnarray*}
We note that the implicit constant above is independent of the choice of $\varepsilon$ and hence letting $\varepsilon \to 0$, we get that 
\begin{equation} \label{poisson_pf_eq2}
\mathcal{H}^{h-\alpha}\left(E^L_{\alpha}(\PP_{i\beta}[\mu])\right) \lesssim \left(\frac{1}{L}\right) |\mu|\left(\partial \X\right) < +\infty \:. 
\end{equation}

As $E^\infty_{\alpha}(\PP_{i\beta}[\mu]) \subset E^L_{\alpha}(\PP_{i\beta}[\mu])$ for all $L>0$, it follows that
\begin{equation*}
\mathcal{H}^{h-\alpha}\left(E^\infty_{\alpha}(\PP_{i\beta}[\mu])\right) =0 \:.
\end{equation*}
Moreover, (\ref{poisson_pf_eq2}) implies that
\begin{equation*}
dim_{\mathcal{H}} E^{\frac{1}{m}}_\alpha (\PP_{i\beta}[\mu]) \le h-\alpha\:,\: \forall m \in \N\:.
\end{equation*}
Finally combining countable stability of the Hausdorff dimension and (\ref{poisson_pf_eq2}) we obtain,
\begin{equation*}
dim_{\mathcal{H}} E_{\alpha}(\PP_{i\beta}[\mu]) = \displaystyle\sup_{m \in \N} \left\{dim_{\mathcal{H}} E^{\frac{1}{m}}_\alpha (\PP_{i\beta}[\mu])\right\} \le h-\alpha \:.
\end{equation*}
\end{proof}

\subsection{Sharpness of the size estimates}
\label{subsec4.3}
\begin{proof}[Proof of Theorem \ref{exceptional_sharp_thm}]
We first prove part (i). Since $\mathcal{H}^{h-\alpha}(E)=0$, for any $m \in \N$, there exists a covering of $E$ by visual balls $\{\mathscr{B}^{(m,j)}\}_{j=1}^\infty$ such that their diameters satisfy
\begin{equation} \label{poisson_sharp_eq1}
\displaystyle\sum_{j=1}^\infty {\left(\mathfrak{d}\left(\mathscr{B}^{(m,j)}\right)\right)}^{h-\alpha} < 2^{-m} \:.
\end{equation}

If $\mathscr{B}$ is a visual ball with center $\eta \in \partial \X$ and with radius $r$, then for notational convenience, $2e^{2\delta}\mathscr{B}$ will denote the visual ball with the same center $\eta$ and radius $2e^{2\delta}r$.

We now define, 
\begin{equation*} 
f:= \displaystyle\sum_{j,m} m\:{\left(\mathfrak{d}\left(\mathscr{B}^{(m,j)}\right)\right)}^{-\alpha}\: \chi_{2e^{2\delta}\mathscr{B}^{(m,j)}} \:.
\end{equation*}

Then by the estimate on the visibility measure on visual balls (\ref{upper_bound_visual_measure}) and the choice (\ref{poisson_sharp_eq1}), it follows that 
\begin{eqnarray*}
\int_{\partial \X} f\: d \lambda_o & \le & \displaystyle\sum_{j,m} m \: {\left(\mathfrak{d}\left(\mathscr{B}^{(m,j)}\right)\right)}^{-\alpha} \:\lambda_o\left(2e^{2\delta}\mathscr{B}^{(m,j)}\right) \\
& \lesssim & \displaystyle\sum_{j,m} m \:{\left(\mathfrak{d}\left(\mathscr{B}^{(m,j)}\right)\right)}^{h-\alpha} \\
& < &   \:\displaystyle\sum_{m=1}^\infty \frac{m}{2^m} \\
& < & +\infty \:.
\end{eqnarray*}

Thus $f d\lambda_o$ defines a Radon measure on $\partial \X$. Now let $\xi \in E$ and fix $m \in \N$. Then there exists $j_m \in \N$ such that $\xi \in \mathscr{B}^{(m,j_m)}$. If $r_m$ is the radius of $\mathscr{B}^{(m,j_m)}$, then $\mathscr{B}(\xi,r_m) \subset 2e^{2\delta}\mathscr{B}^{(m,j_m)}$. Then by Corollary \ref{cor2}, one has
\begin{equation}\label{poisson_sharp_eq2}
e^{-(\beta - \rho)t}\PP_{i\beta}\left[\chi_{2e^{2\delta}\mathscr{B}^{(m,j_m)}}\right](\gamma_\xi(t))
 \ge e^{-(\beta - \rho)t}\PP_{i\beta}\left[\chi_{\mathscr{B}(\xi,r_m)}\right](\gamma_\xi(t)) \ge   \frac{1}{2} \:,  
\end{equation}
whenever (following the statement of Corollary \ref{cor2})
\begin{itemize}
\item $\tau^{2\beta} > \max \left\{1,\: 2C_1\right\}$,
\item $t > \log(\tau)$,
\item $\tau e^{-t} \le r_m$\:.
\end{itemize}

Hence choosing $\tau >0$ sufficiently large and setting 
\begin{equation*}
t_m := \log\left(\frac{\tau}{r_m}\right)  \:,
\end{equation*}
we have by (\ref{poisson_sharp_eq2}),
\begin{eqnarray*}
e^{-(\beta - \rho)t_m}\PP_{i\beta}\left[f\right](\gamma_\xi(t_m)) & \ge & m {\left(\mathfrak{d}\left(\mathscr{B}^{(m,j_m)}\right)\right)}^{-\alpha} e^{-(\beta - \rho)t_m}\PP_{i\beta}\left[\chi_{2e^{2\delta}\mathscr{B}^{(m,j_m)}}\right](\gamma_\xi(t_m)) \\
& \ge & m \left(2^{-\left(\alpha +1\right)}\:e^{-2\alpha \delta}\: \tau^{-\alpha} \right)  e^{\alpha t_m} \:. 
\end{eqnarray*}

Hence, there exists $C >0$, depending only on $\alpha,\beta$ and the intrinsic geometry of $\X$, such that for all $m \in \N$,
\begin{equation} \label{poisson_sharp_eq3}
e^{-(\alpha + \beta - \rho)t_m}\PP_{i\beta}\left[f\right](\gamma_\xi(t_m)) \ge C\: m \:.
\end{equation}

Now by (\ref{poisson_sharp_eq1}),
\begin{equation*}
t_m > \log\left(\tau\right) +  m\left(\frac{\log(2)}{h-\alpha}\right) \to +\infty \text{ as } m \to +\infty \:.
\end{equation*}

Hence (\ref{poisson_sharp_eq3}) gives part (i) of the result.

\medskip

For part (ii), we start off by considering $\alpha=0$. In this case, simply taking $\mu=\lambda_o$, we note that by (\ref{approx_identity}),
\begin{equation*}
e^{-(\beta - \rho)d(o,x)} \PP_{i\beta}[\lambda_o](x) \equiv 1\:,\:\:x \in \X\:,
\end{equation*}
and thus 
\begin{equation*}
E_0\left(\PP_{i\beta}[\lambda_o]\right)=\partial \X\:.
\end{equation*}
Thus by Theorem \ref{exceptional_thm}, we have that 
\begin{equation*}
dim_{\mathcal{H}} \left(\partial \X\right)= dim_{\mathcal{H}} \left(E_0\left(\PP_{i\beta}[\lambda_o]\right)\right) \le h\:.
\end{equation*}
To get the lower bound on the Hausdorff dimension, we recall that $\nu$ only defines a quasi-metric on $\partial \X$. Let $-s^2_0$ be the `asymptotic upper curvature bound of $\X$', where $s_0 \in (0,+\infty]$ is the critical exponent such that for all $s \in (0,s_0)$, there exists a metric $\nu_s$ which is bi-Lipschitz to $\nu^s$ (see \cite{BF, S}). For any such $s \in (0,s_0)$, we thus have the containment of metric balls
\begin{equation*}
\B_{\nu_s}\left(\xi,r\right) \subset \B\left(\xi, Cr^{\frac{1}{s}}\right)\:,\:\:\xi \in \partial \X,\: r \in (0,1)\:,
\end{equation*}
where $C \ge 1$ is some constant depending only on the visual parameter $s$. Thus by the estimate (\ref{upper_bound_visual_measure}), we get that
\begin{equation*}
\lambda_o\left(\overline{\B_{\nu_s}\left(\xi,r\right)}\right) \lesssim r^{\frac{h}{s}}\:, \:\:\xi \in \partial \X,\: r \in (0,1)\:,
\end{equation*}  
with the implicit constant only depending on the intrinsic geometry of $\X$ and the visual parameter. Hence the Hausdorff dimension of $\partial \X$ with respect to the metric $\nu_s$ is $\ge h/s$ (see \cite[p.61]{Hei}). Then by power scaling and and bi-Lipschitz invariance of Hausdorff dimension \cite[Section 2.2]{Falconer}, it follows that for the quasi-metric $\nu$,  $dim_{\mathcal{H}} \left(\partial \X\right) \ge h$ and hence equal to $h$. This gives the result for $\alpha=0$ and also proves Corollary \ref{rigidity_cor}.

\medskip

We next choose and fix $\alpha \in (0,h)$. Again considering the metric $\nu_s$, with visual parameter $s \in (0,s_0)$, we note that (by the above argument) the Hausdorff dimension of $\partial \X$ with respect to $\nu_s$ is equal to $h/s$. Then by Lemma \ref{existence}, for each integer $j > 1/(h-\alpha)$, there exists a compact subset $K_j \subset \partial \X$ with Hausdorff dimension equal to $\left(h-\alpha-\frac{1}{j}\right)/s$, with respect to $\nu_s$. Hence by power scaling and bi-Lipschitz invariance of Hausdorff dimension, we get that for the quasi-metric $\nu$,
\begin{equation} \label{partii_eq1}
dim_{\mathcal{H}}\left(K_j\right)=h-\alpha-\frac{1}{j}\:.
\end{equation}
We now consider
\begin{equation*}
K:= \bigcup_j K_j\:.
\end{equation*}
Then for $\tau \ge h - \alpha$, by (\ref{partii_eq1}),
\begin{equation} \label{partii_eq2}
\mathcal{H}^\tau(K) \le \sum_j \mathcal{H}^\tau(K_j) =0\:.
\end{equation}
On the other hand, for $\tau < h - \alpha$, there exists $j_0 \in \N$, such that
\begin{equation*}
\tau < h-\alpha-\frac{1}{j_0}\:,
\end{equation*}
and hence by (\ref{partii_eq1}),
\begin{equation} \label{partii_eq3}
\mathcal{H}^\tau(K) \ge \mathcal{H}^\tau(K_{j_0}) = \infty\:.
\end{equation}
Thus by (\ref{partii_eq2}) and (\ref{partii_eq3}), it follows that
\begin{equation} \label{partii_eq4}
dim_{\mathcal{H}}\left(K\right)= h- \alpha\:,\text{ with } \mathcal{H}^{h-\alpha}(K)=0\:.
\end{equation}
Then by part (i) of Theorem \ref{exceptional_sharp_thm},  there exists a Radon measure $\mu$ on $\partial \X$, such that $K \subset E^\infty_\alpha(\PP_{i\beta}[\mu])\:.$ As by definition, $E^\infty_\alpha(\PP_{i\beta}[\mu]) \subset E_\alpha(\PP_{i\beta}[\mu])$, we then have $K \subset  E_\alpha(\PP_{i\beta}[\mu])$\:. Thus combining (\ref{partii_eq4}) and Theorem \ref{exceptional_thm}, we get that
\begin{equation*}
h- \alpha = dim_{\mathcal{H}}\left(K\right) \le dim_{\mathcal{H}}\left(E_\alpha(\PP_{i\beta}[\mu])\right) \le h- \alpha\:.  
\end{equation*}
This completes the proof of Theorem \ref{exceptional_sharp_thm}.
\end{proof}

\section*{Part II: Boundary behaviour of superharmonic functions}

\section{Non-tangential limits of Green potentials}
\label{sec5}
In this section, we study the non-tangential boundary behaviour of Green potentials, the genuine non-harmonic part of a positive superharmonic function and work under the hypothesis of Theorem \ref{non-tangential_superharmonic_thm}. 

We start off with the following result on the growth of a Radon measure which is equivalent to the existence of its Green potential:
\begin{lemma} \label{Green_potential_lemma}
For a  Radon measure $\mu$ on $\X$, the Green potential $G[\mu]$ is well-defined if and only if 
\begin{equation} \label{Green_potential_condition}
\int_{\X} e^{-hd(o,x)}\:d\mu(x) <\infty\:.
\end{equation}
\end{lemma}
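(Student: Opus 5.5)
The plan is to compare $G(x_0,\cdot)$ with $e^{-hd(o,\cdot)}$ away from a compact set, using the Green function estimate (\ref{green_estimate}), and to handle the singular part near the pole separately. Recall that $G[\mu]$ is well-defined if $G[\mu](x_0)<\infty$ for some $x_0\in\X$.

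\emph{Necessity.} Suppose $G[\mu](x_0)<\infty$. For $d(x_0,y)>1$, (\ref{green_estimate}) gives $G(x_0,y)\asymp e^{-hd(x_0,y)}$. The triangle inequality $d(x_0,y)\le d(o,y)+d(o,x_0)$ then yields
\begin{equation*}
e^{-hd(o,y)}\le e^{hd(o,x_0)}e^{-hd(x_0,y)}\lesssim e^{hd(o,x_0)}G(x_0,y)\:.
\end{equation*}
Integrating over $\X\setminus B(x_0,1)$ bounds that part of (\ref{Green_potential_condition}) by $C\,G[\mu](x_0)$. On the compact ball $\overline{B(x_0,1)}$ the integrand $e^{-hd(o,\cdot)}$ is bounded and $\mu$ is Radon, so that part is finite. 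Hence (\ref{Green_potential_condition}) holds.

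\emph{Sufficiency.} Assume (\ref{Green_potential_condition}). We must produce one point $x_0$ with $G[\mu](x_0)<\infty$. Away from the pole this is straightforward: for $d(x,y)>1$, the reverse triangle inequality $d(x,y)\ge d(o,y)-d(o,x)$ gives $G(x,y)\lesssim e^{hd(o,x)}e^{-hd(o,y)}$. So for every $x$, the integral of $G(x,\cdot)$ over $\X\setminus B(x,1)$ is finite.

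The obstacle is the local part $\int_{B(x,1)}G(x,y)\,d\mu(y)$. Since $G(x,y)\asymp d(x,y)^{2-n}$ near the pole and $\mu$ is an arbitrary Radon measure, this can be infinite at a particular $x$ (for instance, if $\mu$ has an atom at $x$). I would resolve this by averaging. Fix a ball $B:=B(o,1)$ and apply Fubini:
\begin{equation*}
\int_{B}\int_{B(x,1)}G(x,y)\,d\mu(y)\,dvol(x)\le\int_{B(o,2)}\left(\int_{B(y,1)}G(x,y)\,dvol(x)\right)d\mu(y)\:.
\end{equation*}
The inner integral is uniformly bounded in $y$, because $G(r)\asymp r^{2-n}$ is integrable against the volume density $A(r)\asymp r^{n-1}$ near $0$. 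So it is bounded by $C\mu(B(o,2))<\infty$. Therefore the local part is finite for $vol$-a.e. $x\in B$, and choosing any such $x$ as $x_0$ gives $G[\mu](x_0)<\infty$.

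Alternatively, one could invoke superharmonicity: $G[\mu]$ is a lower semicontinuous superharmonic function, so it is either identically $+\infty$ or finite almost everywhere. The Fubini argument above already shows directly that it is not identically infinite.
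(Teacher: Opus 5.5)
Your proposal is correct and follows essentially the same route as the paper. Both use the Green function estimate plus the triangle inequality for the part away from the pole in each direction, and a Fubini argument based on the local integrability of $G(r)A(r)$ near $0$ to show that the near-pole part $\int_{B(x,1)}G(x,y)\,d\mu(y)$ is finite for $vol$-a.e.\ $x$. The differences (splitting around $x_0$ rather than $o$ in the necessity part, and averaging only over $B(o,1)$ rather than every $B(o,R)$) are cosmetic.
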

\begin{proof}
The key to this result is the estimate of the Green function (\ref{green_estimate}). To prove the necessity, we assume that $G[\mu]$ is well-defined, that is, there exists $x_0 \in \X$ such that $G[\mu](x_0)<\infty$. Then, we first decompose,
\begin{eqnarray*}
\int_{\X} e^{-hd(o,x)}\:d\mu(x) = \int_{B(o,1)} e^{-hd(o,x)}\:d\mu(x) \:+\: \int_{\X \setminus B(o,1)} e^{-hd(o,x)}\:d\mu(x) \:.
\end{eqnarray*}
The first integral on the right hand side is finite as $\mu$ is Radon. To estimate the second integral, we note by the triangle inequality and the estimate (\ref{green_estimate}) that
\begin{equation*}
\int_{\X \setminus B(o,1)} e^{-hd(o,x)}\:d\mu(x) \le e^{hd(o,x_0)}  \int_{\X \setminus B(o,1)} e^{-hd(x_0,x)}\:d\mu(x) \lesssim e^{hd(o,x_0)}  G[\mu](x_0)<\infty\:.
\end{equation*}

Conversely, to prove the sufficiency, let us assume the validity of (\ref{Green_potential_condition}). Then for any $x \in \X$, we decompose,
\begin{eqnarray*}
G[\mu](x)&=& \int_{B(x,1)} G(x,y)\:d\mu(y) \:+\: \int_{\X \setminus B(x,1)} G(x,y)\:d\mu(y)\\
&=:& u_1(x)\:+\:u_2(x)\:.
\end{eqnarray*}
To bound $u_2(x)$, we again apply the estimate (\ref{green_estimate}), the triangle inequality and the hypothesis (\ref{Green_potential_condition}), to get
\begin{equation*}
u_2(x) \lesssim \int_{\X \setminus B(x,1)} e^{-hd(x,y)}\:d\mu(y) \le e^{hd(o,x)} \int_{\X \setminus B(x,1)} e^{-hd(o,y)}\:d\mu(y) <\infty\:.
\end{equation*}
To complete the proof, we will show that $u_1$ is finite almost everywhere on $\X$. Thus it suffices to show that on geodesic balls $B(o,R)$, for any $R>0$, $u_1 \in L^1(B(o,R))$. Indeed, by Fubini-Tonelli's theorem, local integrability, radiality of the Green function and the assumption that $\mu$ is Radon, we get
\begin{eqnarray*}
\int_{B(o,R)} u_1(x)\:dvol(x) &=& \int_{B(o,R)}  \int_{B(x,1)} G(x,y)\:d\mu(y)\:dvol(x) \\
&\le &  \int_{B(o,R+1)}  \left(\int_{B(y,1)} G(x,y)\:dvol(x)\right)\:d\mu(y) \\
& \lesssim & \mu\left(B(o,R+1)\right) < \infty\:.
\end{eqnarray*}
\end{proof}

The main result of this section is the following:
\begin{theorem} \label{non-tangential_green_thm}
Let $\X,\:\partial \X,\:h$ be as in Theorem \ref{non-tangential_superharmonic_thm}. Let $\psi$ be a non-negative measurable function on $\X$ such that $\psi \:dvol$ is a Radon measure on $\X$ and $G[\psi]$ is well-defined. If 
\begin{equation} \label{Lp_condition}
\int_{\X} e^{-hd(o,x)}\:\psi(x)^p\:dvol(x) <\infty\:,\:\:\text{ for some } p>\frac{n}{2}\:,
\end{equation} 
then $G[\psi]$ has non-tangential limit $0$ at $\lambda_o$-a.e. point on $\partial \X$.
\end{theorem}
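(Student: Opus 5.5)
Split $G[\psi]=u_1+u_2$ into a polar and a non-polar part:
\begin{equation*}
u_1(x)=\int_{B(x,1)}G(x,y)\psi(y)\,dvol(y),\qquad u_2(x)=\int_{\X\setminus B(x,1)}G(x,y)\psi(y)\,dvol(y).
\end{equation*}
We show separately that each part has non-tangential limit $0$ at $\lambda_o$-a.e. $\xi\in\partial\X$.

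\textbf{The non-polar part.} By (\ref{green_estimate}) we have $G(x,y)\asymp e^{-hd(x,y)}$ for $d(x,y)\ge 1$. Writing $d(x,y)=d(o,x)+d(o,y)-2(x|y)_o$ and using hyperbolicity, the plan is to compare $e^{-hd(x,y)}$ with $\int_{\partial\X}P(x,\eta)\,d\sigma_y(\eta)$. Here $\sigma_y$ is $e^{-hd(o,y)}$ times a normalized visibility-type measure, concentrated on the shadow of a unit ball around $y$ (radius $\asymp e^{-d(o,y)}$ in $\nu$).

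This dominates $u_2$, up to constants, by a harmonic function $\PP[\sigma]$. Here $\sigma$ is the projection to $\partial\X$ of the measure $e^{-hd(o,y)}\psi(y)\,dvol(y)$, which is finite by Lemma \ref{Green_potential_lemma}. The far parts of $\psi$ need no care, since for fixed $R$ the contribution of $B(o,R)$ tends to $0$ as $x\to\xi$.

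So it suffices to handle the tail $\psi\chi_{\X\setminus B(o,R)}$. Its projected measure $\sigma_R$ has total mass tending to $0$ as $R\to\infty$. Lemma \ref{maximal_estimate_lemma} with $\beta=\rho$ (the harmonic case) gives
\begin{equation*}
\M_\alpha u_2\lesssim \M_{HL}\sigma_R+o(1).
\end{equation*}
Lemma \ref{Sec3_lemma1} then gives
\begin{equation*}
\lambda_o\{\limsup u_2>t\}\lesssim |\sigma_R|(\partial\X)/t\to 0.
\end{equation*}
Hence $\limsup u_2=0$ a.e. in every cone. A countable union over $\alpha\in\N$ handles all apertures.

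\textbf{The polar part.} Apply Hölder's inequality with $1/p+1/q=1$. Since $p>n/2$, we have $q<n/(n-2)$, so $G^q$ is integrable on $B(x,1)$ by (\ref{green_estimate}), with a bound uniform in $x$ by harmonicity. This gives
\begin{equation*}
u_1(x)\lesssim\Big(\int_{B(x,1)}\psi^p\Big)^{1/p}.
\end{equation*}
It then suffices to show
\begin{equation*}
\int_{B(x,1)}\psi^p\,dvol\to0\quad\text{as } x\to\xi \text{ in } \Gamma_\alpha(\xi), \text{ for a.e. } \xi.
\end{equation*}

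For this, introduce the mass operator
\begin{equation*}
\mathcal{T}(\xi)=\int_{\Gamma_{\alpha'}(\xi)\setminus B(o,R)}\psi^p\,dvol,
\end{equation*}
where the enlarged cone $\Gamma_{\alpha'}(\xi)$ contains $B(x,1)$ for all $x\in\Gamma_\alpha(\xi)$; this holds by comparing Gromov products. By Fubini,
\begin{equation*}
\int_{\partial\X}\mathcal{T}\,d\lambda_o=\int \psi(y)^p\,\lambda_o\{\xi: y\in\Gamma_{\alpha'}(\xi)\}\,dvol(y).
\end{equation*}
The set $\{\xi: y\in\Gamma_{\alpha'}(\xi)\}$ is a visual ball of radius $\lesssim e^{-d(o,y)}$, so by (\ref{upper_bound_visual_measure}) its measure is $\lesssim e^{-hd(o,y)}$. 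Hence $\mathcal{T}\in L^1(\partial\X)$ by (\ref{Lp_condition}), so $\mathcal{T}<\infty$ a.e.

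At such $\xi$, the cone mass outside $B(o,R)$ tends to $0$ as $R\to\infty$ by dominated convergence. Since $B(x,1)\subset\Gamma_{\alpha'}(\xi)\setminus B(o,R)$ once $d(o,x)>R+1$, the claim follows.

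\textbf{Main obstacle.} The delicate step is the geometric comparison in the non-polar part: bounding $e^{-hd(x,y)}$ by an averaged Poisson kernel $\int P(x,\eta)\,d\sigma_y(\eta)$ uniformly. This needs a lower bound $\lambda_o(\text{shadow})\gtrsim e^{-hd(o,y)}$ and control of $(x|\eta)_o$ versus $(x|y)_o$ for $\eta$ in the shadow, up to $\delta$. If the lower shadow bound is unavailable, I would instead bound $u_2$ directly by the dyadic visual annuli argument of Lemma \ref{maximal_estimate_lemma}, applied to the projected measure with $\lambda_o$ replaced by $r^h$, and use $h$-doubling.
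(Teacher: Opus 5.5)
Your split $G[\psi]=u_1+u_2$ and both halves of the argument follow the paper's proof. For $u_1$ you use the same H\"older step, the enlarged cone $\Gamma_{e^2\alpha}(\xi)$, and the mass operator integrated by Fubini against the shadow bound $\tilde\Gamma_{\alpha'}(y)\subset\B(\eta_y,\alpha' e^{-d(o,y)})$. For $u_2$ you use domination by $C_Re^{-hd(o,x)}+\PP[\sigma_R]$, then Lemma \ref{maximal_estimate_lemma} with $\beta=\rho$, the weak-type Lemma \ref{Sec3_lemma1}, and $R\to\infty$. Your inequality $\M_\alpha u_2\lesssim\M_{HL}\sigma_R+o(1)$ should be stated for the $\limsup$ over the cone, since $C_Re^{-hd(o,x)}$ is not small on all of $\Gamma_\alpha(\xi)$. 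The one real gap is the step you flag yourself: you never prove the comparison of $e^{-hd(x,y)}$ with a Poisson kernel. Your diagnosis of what it needs is also off, because no lower bound on $\lambda_o$ of shadows is required.

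The missing observation is that Gromov products increase along geodesic rays from $o$. Let $\gamma$ be the ray from $o$ through $y$ and $\eta_y=\gamma(\infty)$. The triangle inequality $d(x,\gamma(t+s))\le d(x,\gamma(t))+s$ makes $t\mapsto t-d(x,\gamma(t))$ non-decreasing, so $(x|y)_o\le(x|\eta_y)_o$. Therefore
\[
d(x,y)-d(o,y)=d(o,x)-2(x|y)_o\ \ge\ d(o,x)-2(x|\eta_y)_o=B_{\eta_y}(x),\quad\text{i.e.}\quad e^{-hd(x,y)}\le e^{-hd(o,y)}\,P(x,\eta_y),
\]
with constant $1$. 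So you may take $\sigma_y=e^{-hd(o,y)}\delta_{\eta_y}$. Then $\sigma_R$ is the push-forward of $e^{-hd(o,y)}\psi(y)\chi_{\X\setminus B(o,R)}(y)\,dvol(y)$ under $y\mapsto\eta_y$, which is exactly the paper's measure $\omega_R$.

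If you prefer to smear over the shadow of $B(y,1)$, the same monotonicity together with $|(x|y)_o-(x|y')_o|\le d(y,y')$ gives $P(x,\eta)\ge e^{-2h}e^{-hB_y(x)}$ for every $\eta$ in that shadow, where $B_y(x)=d(x,y)-d(o,y)$. Averaging against any probability measure on the shadow then works, and the size of the shadow never enters. Your fallback dyadic-annuli route would need precisely this inequality as well. With this step supplied, your argument is complete and coincides with the paper's proof.
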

We first note that in light of Lemma \ref{Green_potential_lemma}, the well-definedness of the Green potential $G[\psi]$ is equivalent to the condition:
\begin{equation} \label{master_condition}
\int_{\X} e^{-hd(o,x)}\:\psi(x)\:dvol(x)<\infty\:.
\end{equation}
To prove Theorem \ref{non-tangential_green_thm}, we again proceed as in the proof of Lemma \ref{Green_potential_lemma}, by decomposing the Green potential into polar and non-polar parts:
\begin{equation*}
G[\psi](x)=u_1(x)\:+\:u_2(x)\:,\:\:x \in \X\:,
\end{equation*}
where
\begin{eqnarray*}
&&u_1(x):= \int_{B(x,1)} G(x,y)\:\psi(y)\:dvol(y)\:,\\
&&u_2(x):= \int_{\X \setminus B(x,1)} G(x,y)\:\psi(y)\:dvol(y)\:.
\end{eqnarray*}
Then Theorem \ref{non-tangential_green_thm} follows from the following Lemmata:
\begin{lemma} \label{non-polar_lemma}
The function $u_2$ has non-tangential limit $0$ at $\lambda_o$-a.e. point on $\partial \X$.
\end{lemma}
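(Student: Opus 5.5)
Fix $x$ with $d(o,x)$ large and $y$ with $d(x,y)\ge 1$. By (\ref{green_estimate}), $G(x,y)\asymp e^{-hd(x,y)}$. The plan is to compare this Green kernel with a Poisson kernel evaluated at a boundary point attached to $y$. To each $y \in \X$ with $d(o,y)\ge 1$ we associate $\eta_y\in\partial\X$, the endpoint of the geodesic ray from $o$ through $y$. We then split $\X\setminus B(x,1)$ according to whether $d(o,y)$ is small or large.

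\textbf{Step 1: the bounded region.} For $y$ in a fixed ball $B(o,R_0)$ the contribution is $\lesssim e^{-hd(o,x)}\int_{B(o,R_0)}\psi\,dvol\to 0$ uniformly. This uses $d(x,y)\ge d(o,x)-R_0$ and the fact that $\psi\,dvol$ is Radon.

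\textbf{Step 2: the far region, and the key geometric inequality.} For $d(o,y)$ large we would show, using Gromov hyperbolicity (\ref{four_pt_condition'}) together with $d(x,y)=d(o,x)+d(o,y)-2(x|y)_o$ and $(x|y)_o\le (x|\eta_y)_o+C\delta$ (since $y$ lies on the ray to $\eta_y$), that
\begin{equation*}
e^{-hd(x,y)}\lesssim e^{-hd(o,y)}\,e^{-hd(o,x)}e^{2h(x|\eta_y)_o}= e^{-hd(o,y)}\,P(x,\eta_y).
\end{equation*}
Here $P(x,\eta)=e^{-hB_\eta(x)}=e^{-hd(o,x)+2h(x|\eta)_o}$, up to bounded multiplicative errors coming from the boundary extension of Gromov products. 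Then
\begin{equation*}
u_2(x)\lesssim o(1)+\PP[\sigma](x),\qquad \sigma:=\pi_*\bigl(e^{-hd(o,\cdot)}\psi\,\chi_{\{d(o,\cdot)\ge R_0\}}\,dvol\bigr),
\end{equation*}
where $\pi(y)=\eta_y$ is the radial projection. The measure $\sigma$ is a finite positive Radon measure on $\partial\X$ of total mass at most $\int_{\X\setminus B(o,R_0)} e^{-hd(o,y)}\psi\,dvol$, which is finite by (\ref{master_condition}) and tends to $0$ as $R_0\to\infty$.

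\textbf{Step 3: conclusion via the Fatou machinery.} Apply Lemma \ref{maximal_estimate_lemma} with $\beta=\rho$, so that $\PP_{i\rho}=C_\rho \PP$ and the weight is $1$. This gives $\M_\alpha(\PP[\sigma])\lesssim \M_{HL}\sigma$. Lemma \ref{Sec3_lemma1} then yields
\begin{equation*}
\lambda_o\bigl(\{\M_\alpha\PP[\sigma_{R_0}]>t\}\bigr)\lesssim \sigma_{R_0}(\partial\X)/t.
\end{equation*}
Hence for each $t>0$ the set of $\xi$ with $\limsup_{x\to\xi,\,x\in\Gamma_\alpha(\xi)}u_2(x)>Ct$ has $\lambda_o$-measure $\lesssim \sigma_{R_0}(\partial\X)/t$ for every $R_0$; note that the Step 1 term vanishes in the limit. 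Letting $R_0\to\infty$, then taking $t=1/k$ and a union over $k$ and over rational $\alpha$, gives non-tangential limit $0$ a.e. Alternatively, one could use Lemma \ref{Sec3_lemma10} applied to $\sigma$, noting that the limit of $\PP[\sigma]$ is $d\sigma_{ac}/d\lambda_o$, which is small in $L^1$.

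\textbf{Main obstacle.} The hardest step is the geometric inequality in Step 2, specifically controlling $(x|y)_o$ by $(x|\eta_y)_o$ up to an additive constant. The reason it should hold is that for $z$ far along the ray beyond $y$, $(y|z)_o=d(o,y)$. Hyperbolicity then gives $(x|\eta_y)_o\ge\min\{(x|y)_o,d(o,y)\}-2\delta=(x|y)_o-2\delta$, since $(x|y)_o\le d(o,y)$. The constant $C$ in Step 2 is $2\delta$, which suffices.
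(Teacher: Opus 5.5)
Your proposal is correct and follows the paper's proof essentially step for step. You use the same split into $B(o,R)$ and its complement, and the same comparison $e^{-hd(x,y)}\lesssim e^{-hd(o,y)}P(x,\eta_y)$, which yields a Poisson integral of the radially projected measure $e^{-hd(o,y)}\psi\,dvol$ on $\partial\X$. This is the paper's $\omega_R$, which you define directly as a push-forward. You then conclude, as the paper does, via Lemma \ref{maximal_estimate_lemma} (implicitly with $\beta=\rho$, which you make explicit) and the weak-type bound of Lemma \ref{Sec3_lemma1}, using $\omega_R(\partial\X)\to 0$.

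The one real difference is the step you flag as the main obstacle. The paper gets $(x|y)_o\le (x|\eta_y)_o$ exactly, from the monotonicity of $t\mapsto (x|\gamma(t))_o$ along the ray. That monotonicity is just the triangle inequality, since $t-d(x,\gamma(t))$ is nondecreasing. So your hyperbolicity argument with its $2\delta$ loss is valid but not needed.
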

\begin{lemma} \label{polar_lemma}
The function $u_1$ has non-tangential limit $0$ at $\lambda_o$-a.e. point on $\partial \X$.
\end{lemma}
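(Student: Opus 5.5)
The plan is to use Hölder's inequality to reduce $u_1$ to a local $L^p$-mass of $\psi$, and then show that this mass, summed over a slightly enlarged non-tangential cone, is finite at $\lambda_o$-a.e. boundary point. The threshold $p>n/2$ enters only through the local integrability of a power of the Green function near its pole.

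\emph{Step 1 (Hölder).} Let $p'$ be the conjugate exponent of $p$. For every $x\in\X$,
\begin{equation*}
u_1(x)\le \left(\int_{B(x,1)} G(x,y)^{p'}\,dvol(y)\right)^{1/p'}\left(\int_{B(x,1)}\psi(y)^p\,dvol(y)\right)^{1/p}.
\end{equation*}
By radiality of $G$, (\ref{green_estimate}) and (\ref{jacobian_estimate}), the first factor is bounded by a constant times $\left(\int_0^1 r^{-(n-2)p'}r^{n-1}\,dr\right)^{1/p'}$. This is finite precisely when $(n-2)p'<n$, i.e. when $p>n/2$, and the bound does not depend on $x$. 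Hence
\begin{equation*}
u_1(x)\lesssim \left(\int_{B(x,1)}\psi^p\,dvol\right)^{1/p},\qquad x\in\X.
\end{equation*}
It therefore suffices to show that the local mass $\int_{B(x,1)}\psi^p\,dvol$ tends to $0$ as $x\to\xi$ inside $\Gamma_\alpha(\xi)$, for $\lambda_o$-a.e. $\xi$.

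\emph{Step 2 (geometry of cones).} I will show that if $x\in\Gamma_\alpha(\xi)$ and $d(x,y)<1$, then $y\in\Gamma_{e^2\alpha}(\xi)$. The Gromov product $(\cdot|\xi)_o$ is $1$-Lipschitz, being a limit of $1$-Lipschitz functions, and $d(o,y)\le d(o,x)+1$. Therefore
\begin{equation*}
e^{-(y|\xi)_o}\le e\,e^{-(x|\xi)_o}<e\alpha e^{-d(o,x)}\le e^2\alpha e^{-d(o,y)}.
\end{equation*}
Moreover, as $x\to\xi$ we have $d(o,x)\to\infty$, so $B(x,1)\subset \Gamma_{e^2\alpha}(\xi)\setminus B(o,R)$ with $R=d(o,x)-1\to\infty$.

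\emph{Step 3 (mass operator, the main step).} Define
\begin{equation*}
T_{\alpha'}(\xi):=\int_{\Gamma_{\alpha'}(\xi)}\psi^p\,dvol,\qquad \xi\in\partial\X,\ \alpha'>1.
\end{equation*}
By Tonelli,
\begin{equation*}
\int_{\partial\X}T_{\alpha'}\,d\lambda_o=\int_\X \psi(y)^p\,\lambda_o\left(\{\xi:y\in\Gamma_{\alpha'}(\xi)\}\right)dvol(y).
\end{equation*}
The set $\{\xi: e^{-(y|\xi)_o}<\alpha' e^{-d(o,y)}\}$ is a shadow of $y$. I need to check that it lies in a visual ball of radius $\lesssim \alpha' e^{-d(o,y)}$, using (\ref{four_pt_condition'}) with the point $\gamma_{o,y}(\infty)$ on the geodesic ray from $o$ through $y$. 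This is the step I expect to require the most care. Granting it, (\ref{upper_bound_visual_measure}) bounds its measure by $\lesssim e^{-hd(o,y)}$. Then (\ref{Lp_condition}) gives $T_{\alpha'}\in L^1(\partial\X)$, so $T_{\alpha'}<\infty$ off a $\lambda_o$-null set $N_{\alpha'}$.

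\emph{Step 4 (conclusion).} Take $\alpha'=e^2 k$ for $k\in\N$, and let $N=\bigcup_k N_{e^2k}$, which is still $\lambda_o$-null. Fix $\xi\notin N$ and any $\alpha>1$, and choose an integer $k\ge\alpha$. By Steps 1 and 2, for $x\in\Gamma_\alpha(\xi)$,
\begin{equation*}
u_1(x)^p\lesssim \int_{\Gamma_{e^2k}(\xi)\setminus B(o,d(o,x)-1)}\psi^p\,dvol.
\end{equation*}
The right-hand side is the tail of the convergent integral $T_{e^2k}(\xi)$, so it tends to $0$ as $x\to\xi$ by dominated convergence. Hence $u_1$ has non-tangential limit $0$ at every $\xi\notin N$, that is, at $\lambda_o$-a.e. point of $\partial\X$.
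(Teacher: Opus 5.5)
Your proposal is correct and follows the paper's proof essentially step for step: H\"older's inequality with the local $L^{p'}$-integrability of $G$ near its pole, the inclusion $B(x,1)\subset\Gamma_{e^2\alpha}(\xi)\cap A_R$, and the $L^1(\partial\X)$ bound on the cone-mass operator via Tonelli and the shadow estimate (Lemmata \ref{shadow_lemma1} and \ref{mass_lemma}). The only cosmetic differences are these. The paper obtains the shadow containment from monotonicity of Gromov products along the ray through $y$, whereas you use (\ref{four_pt_condition'}); both work, yours up to a harmless constant factor. Your countable union over the apertures $e^2k$ also makes explicit the ``for all $\alpha>1$'' quantifier that the paper leaves implicit.
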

We prove Lemmata \ref{non-polar_lemma} and \ref{polar_lemma} in Subsections \ref{subsec5.1} and \ref{subsec5.2} respectively. 
\subsection{Non-polar part} \label{subsec5.1} 
\begin{proof}[Proof of Lemma \ref{non-polar_lemma}]
The key idea is to control the non-tangential limit of $u_2$ in terms of some suitable Poisson integrals.

Let $x \in \X$. For $y \in \X \setminus B(x,1)$, we have by the pointwise decay of the Green function away from the pole, given by (\ref{green_estimate}),
\begin{equation*}
u_2(x) \lesssim \int_{\X \setminus B(x,1)} e^{-hd(x,y)}\:\psi(y)\:dvol(y) \le \int_{\X} e^{-hd(x,y)}\:\psi(y)\:dvol(y)\:.
\end{equation*}
Fixing an $R>0$, we decompose, $$\X = B(o,R) \sqcup \left(\X \setminus B(o,R)\right)\:.$$
On $B(o,R)$, by triangle inequality and the fact that $\psi\:dvol$ is a Radon measure, we have
\begin{equation} \label{non-polar_eq1}
\int_{B(o,R)} e^{-hd(x,y)}\:\psi(y)\:dvol(y) \le \left( e^{hR}  \|\psi\|_{L^1(B(o,R))}\right) e^{-hd(o,x)}=:C_R\:e^{-hd(o,x)}\:.
\end{equation}
On the other hand, on $\X \setminus B(o,R)$,
\begin{equation*}
\int_{\X \setminus B(o,R)} e^{-hd(x,y)}\:\psi(y)\:dvol(y) = \int_{\X \setminus B(o,R)} e^{-hB_y(x)}\:e^{-hd(o,y)}\:\psi(y)\:dvol(y)\:,
\end{equation*}
where
\begin{equation*}
B_y(x):= d(x,y)-d(o,y)\:,\:\:x,y \in \X\:.
\end{equation*}
Now rewriting the above in terms of the Gromov product, we have
\begin{equation*}
B_y(x)= d(o,x)-2(x|y)_o\:.
\end{equation*}
Consider the geodesic segment that joins $o$ to $y$ and extend it to $\partial \X$. There exists a unique point on $\partial \X$ where this extended infinite geodesic ray, say $\gamma$, meets $\partial \X$, say $\eta_y$. Combining this with the fact that the Gromov product is monotonically increasing along geodesic rays, it follows that $y \mapsto B_y(x)$ is monotonically decreasing along $\gamma$. Hence we get, 
\begin{equation*}
\int_{\X \setminus B(o,R)} e^{-hd(x,y)}\:\psi(y)\:dvol(y) \le \int_{\X \setminus B(o,R)} e^{-hB_{\eta_y}(x)}\:e^{-hd(o,y)}\:\psi(y)\:dvol(y)\:.
\end{equation*}
Now by the hypothesis (\ref{master_condition}) and the Riesz representation theorem, there exists a Radon measure $\omega_R$ on $\partial \X$ such that for all continuous functions $\phi$ on $\partial \X$, we have
\begin{equation*}
\int_{\X \setminus B(o,R)} \phi(\eta_y)\:e^{-hd(o,y)}\:\psi(y)\:dvol(y)= \int_{\partial \X} \phi(\eta)\:d\omega_R(\eta)\:.
\end{equation*} 
Thus by the boundary continuity of the Busemann function, we have
\begin{equation} \label{non-polar_eq2}
\int_{\X \setminus B(o,R)} e^{-hd(x,y)}\:\psi(y)\:dvol(y) \le \int_{\partial \X} e^{-hB_\eta(x)}\:d\omega_R(\eta)=\PP\left[\omega_R\right](x)\:.
\end{equation}
Hence combining (\ref{non-polar_eq1}) and (\ref{non-polar_eq2}), we get
\begin{equation*}
u_2(x) \lesssim C_R\:e^{-hd(o,x)}\:+\:\PP\left[\omega_R\right](x)\:,
\end{equation*}
and thus for $\alpha >1,\:\xi \in \partial \X$, by Lemma \ref{maximal_estimate_lemma},
\begin{eqnarray*}
\limsup_{\substack{x \to \xi \\ x \in \Gamma_\alpha(\xi)}} u_2(x) \lesssim  \limsup_{\substack{x \to \xi \\ x \in \Gamma_\alpha(\xi)}}  \PP\left[\omega_R\right](x) \le \M_\alpha \PP\left[\omega_R\right](\xi) \lesssim \M_{HL} \omega_R(\xi)\:,
\end{eqnarray*}
where the implicit constants depend only on $\alpha$ and the intrinsic geometry of $\X$. Thus by Lemma \ref{Sec3_lemma1}, for any $t>0$,
\begin{equation} \label{non-polar_eq3}
\lambda_o(\mathcal{E}_t) \le C\frac{\omega_R(\partial \X)}{t}\:,
\end{equation}
where $C$ only depends on $\alpha$ and the intrinsic geometry of $\X$ and 
\begin{equation*}
\mathcal{E}_t := \left\{\xi \in \partial \X : \limsup_{\substack{x \to \xi \\ x \in \Gamma_\alpha(\xi)}} u_2(x) > t\right\}\:.
\end{equation*} 
Suppose \begin{equation*}
\mathcal{E} := \left\{\xi \in \partial \X : \limsup_{\substack{x \to \xi \\ x \in \Gamma_\alpha(\xi)}} u_2(x) > 0\right\}\:,
\end{equation*}
has positive $\lambda_o$ measure. Then we note that as $\mathcal{E} \subset \displaystyle\cup_{n =1}^\infty \mathcal{E}_{\frac{1}{n}}$, there must exist some $n \in \N$, such that $\lambda_o\left(\mathcal{E}_{\frac{1}{n}}\right)>0$. For this $n \in \N$, we choose $R>0$ such that
\begin{equation*}
C\omega_R\left(\partial \X\right) < \frac{\lambda_o(\mathcal{E}_{\frac{1}{n}})}{2n}\:,
\end{equation*}
contradicting (\ref{non-polar_eq3}) for $t=1/n$. This completes the proof.
\end{proof}
\begin{remark} \label{non-polar_remark}
We note that the proof of Lemma \ref{non-polar_lemma} works for general Radon measures $\mu$ such that $G[\mu]$ is well-defined. Thus for the a.e. existence of the non-tangential boundary limits of the non-polar part, the absolute continuity of the Riesz measure and the suitable weighted integrability condition on the density imposed by (\ref{density_condition}) in Theorem \ref{non-tangential_superharmonic_thm} are superflous. 
\end{remark}

\subsection{Polar part} \label{subsec5.2}
In this subsection, we prove Lemma \ref{polar_lemma}. We start off with the following definition:
\begin{definition} \label{non-tangential-shadow}
For $x \in \X$ and $\alpha >1$, we set
\begin{equation*}
\tilde{\Gamma}_\alpha(x):=\left\{\xi \in \partial \X : x \in \Gamma_\alpha(\xi)\right\}\:.
\end{equation*}
The set defined above is on $\partial \X$ and is illustrated by the red region in Figure \ref{figure1}.
\end{definition}
\begin{figure}[h]
\begin{center}
\begin{tikzpicture}
	\draw [gray] (4,4) circle [radius={sqrt(5)}];
	\draw [fill] (4,4) circle [radius=.05] node [below] {$o$};
	\draw [dashed] (4,4) -- (6.236,4);
	\draw [red][fill] (5,4) circle [radius=.05] node [below] {$x$};
	\draw[red][thick] plot[domain=6:6.236, smooth] (\x,{4+sqrt(5-(\x-4)^2)});
	\draw[red][thick] plot[domain=6:6.236, smooth] (\x,{4-sqrt(5-(\x-4)^2)});
	\pgftext[at={\pgfpoint{7cm}{4cm}}]{\textcolor{red}{$\tilde{\Gamma}_\alpha(x)$}};
	\pgftext[at={\pgfpoint{6cm}{2cm}}]{$\partial \X$};
	  \end{tikzpicture}
\end{center}
\caption{The set $\tilde{\Gamma}_\alpha(x)$}
\label{figure1}
\end{figure}

The following result can be viewed as a `shadow lemma' which illustrates a containment relation between $\tilde{\Gamma}_\alpha(x)$ and the visual balls centred at the projection of $x$:
\begin{lemma} \label{shadow_lemma1}
For $x \in \X$, let $\eta_x \in \partial \X$ be the projection of $x$ from $o$ to $\partial \X$. Then $\tilde{\Gamma}_\alpha(x) \subset \B\left(\eta_x,\:\alpha\:e^{-d(o,x)}\right)$, for all $\alpha>1$.
\end{lemma}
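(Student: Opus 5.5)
The plan is to show directly that membership $x \in \Gamma_\alpha(\xi)$ forces $\nu(\eta_x,\xi) < \alpha e^{-d(o,x)}$. By Definition \ref{non-tangential_limit_defn}, $\xi \in \tilde{\Gamma}_\alpha(x)$ means $e^{-(x|\xi)_o} < \alpha e^{-d(o,x)}$. Since $\nu(\eta_x,\xi) = e^{-(\eta_x|\xi)_o}$, it suffices to prove the monotonicity inequality
\begin{equation*}
(\eta_x|\xi)_o \ge (x|\xi)_o\:, \quad \xi \in \partial \X\:.
\end{equation*}
Granting this, we get $\nu(\eta_x,\xi) \le e^{-(x|\xi)_o} < \alpha e^{-d(o,x)}$, which is exactly $\xi \in \B(\eta_x,\alpha e^{-d(o,x)})$.

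For the inequality, let $\gamma$ be the unit-speed geodesic ray from $o$ through $x$, so that $\gamma(d(o,x)) = x$ and $\gamma(+\infty) = \eta_x$. Fix any $z \in \X$ and take $s \le t$. Then $d(o,\gamma(t)) - d(o,\gamma(s)) = t - s$, while the triangle inequality gives $d(\gamma(t),z) \le d(\gamma(s),z) + (t-s)$. Hence
\begin{equation*}
(\gamma(t)|z)_o = \tfrac12\left(t + d(o,z) - d(\gamma(t),z)\right) \ge (\gamma(s)|z)_o\:.
\end{equation*}
So $t \mapsto (\gamma(t)|z)_o$ is non-decreasing; this is the monotonicity of the Gromov product along geodesic rays already used in the proof of Lemma \ref{non-polar_lemma}. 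In particular $(\gamma(t)|z)_o \ge (x|z)_o$ for all $t \ge d(o,x)$.

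The main (mild) obstacle is passing to boundary points in both slots. Here I would use the continuous extension (\ref{Gromov_product_bdry}) of the Gromov product to $\overline{\X}\times\overline{\X}$, valid since $\X$ is proper, CAT(0) and Gromov hyperbolic. First let $z \to \xi$ with $t$ fixed, which gives $(\gamma(t)|\xi)_o \ge (x|\xi)_o$. Then let $t \to +\infty$, so that $\gamma(t) \to \eta_x$ in the cone topology, which gives $(\eta_x|\xi)_o \ge (x|\xi)_o$. If $\xi = \eta_x$ the statement is trivial, since $\eta_x$ is the centre of the ball. This yields the claimed containment for every $\alpha > 1$, and no dependence on $\delta$ enters.
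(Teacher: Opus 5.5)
Your proposal is correct and follows the paper's proof: both reduce the containment to the monotonicity $(x|\xi)_o \le (\eta_x|\xi)_o$ of the Gromov product along the ray from $o$ through $x$, and then conclude $\nu(\eta_x,\xi) \le e^{-(x|\xi)_o} < \alpha e^{-d(o,x)}$. The paper simply cites this monotonicity, while you also prove it from the triangle inequality and pass to the boundary via the continuous extension (\ref{Gromov_product_bdry}); this extra justification is valid.
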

\begin{proof}
We note that $\xi \in \tilde{\Gamma}_\alpha(x)$ implies that 
\begin{equation*}
e^{-(x|\xi)_o} < \alpha e^{-d(o,x)}\:.
\end{equation*}
Now as Gromov products are increasing along geodesic rays, we have $(x|\xi)_o \le (\eta_x|\xi)_o$, and hence
\begin{equation*}
\nu(\xi,\eta_x)=e^{-(\eta_x|\xi)_o} \le e^{-(x|\xi)_o} < \alpha e^{-d(o,x)}\:.
\end{equation*}
This completes the proof. 
\end{proof}

Next, for a Radon measure $\mu$ on $\X$, $\alpha>1$ and $\xi \in \partial \X$, we define the mass operator of $\mu$ on non-tangential cones $\Gamma_\alpha(\xi)$,
\begin{equation} \label{mass_non-tangential_cone}
\Ss^*_\alpha\mu(\xi):= \mu\left(\Gamma_\alpha(\xi)\right)\:.
\end{equation}
For $R>0$, let $A_R:= \X \setminus B(o,R)$. Our next result shows an interesting property of the operator (\ref{mass_non-tangential_cone}):
\begin{lemma} \label{mass_lemma}
If a Radon measure $\mu$ on $\X$ satisfies the condition (\ref{Green_potential_condition}), then for all $\alpha>1$,
\begin{itemize}
\item[(i)] $\Ss^*_\alpha\mu \in L^1(\partial \X)\:;$
\item[(ii)] $\mu\left(\Gamma_\alpha(\xi)\right)< \infty$, for $\lambda_o$-a.e. $\xi \in \partial \X$, and thus
\begin{equation*}
\lim_{R \to \infty} \mu\left(\Gamma_\alpha(\xi) \cap A_R\right)=0\:,\:\:\lambda_o\text{-a.e. } \xi \in \partial \X\:.
\end{equation*}
\end{itemize}
\end{lemma}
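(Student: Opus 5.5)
The plan is to prove (i) by Fubini–Tonelli, switching the order of integration between $\partial \X$ and $\X$. By definition $\xi \in \tilde{\Gamma}_\alpha(x)$ if and only if $x \in \Gamma_\alpha(\xi)$. Hence
\begin{equation*}
\int_{\partial \X} \Ss^*_\alpha\mu(\xi)\:d\lambda_o(\xi) = \int_{\partial \X}\int_{\X} \chi_{\Gamma_\alpha(\xi)}(x)\:d\mu(x)\:d\lambda_o(\xi) = \int_{\X} \lambda_o\left(\tilde{\Gamma}_\alpha(x)\right)\:d\mu(x)\:.
\end{equation*}
Measurability of $(x,\xi)\mapsto \chi_{\Gamma_\alpha(\xi)}(x)$ will be handled by noting that the defining inequality $e^{-(x|\xi)_o}<\alpha e^{-d(o,x)}$ cuts out an open subset of $\X\times\partial \X$. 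This uses the continuity of the Gromov product on $\overline{\X}$ from \eqref{Gromov_product_bdry}.

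Next I will estimate the shadow measure. By the shadow Lemma \ref{shadow_lemma1}, $\tilde{\Gamma}_\alpha(x) \subset \B(\eta_x,\alpha e^{-d(o,x)})$. When $\alpha e^{-d(o,x)}<1$, the bound \eqref{upper_bound_visual_measure} gives
\begin{equation*}
\lambda_o(\tilde{\Gamma}_\alpha(x)) \lesssim \alpha^h e^{-hd(o,x)}\:.
\end{equation*}
When $\alpha e^{-d(o,x)}\ge 1$, that is on the compact ball $\overline{B(o,\log\alpha)}$, I will use the trivial bound $\lambda_o(\tilde{\Gamma}_\alpha(x))\le 1 \le \alpha^h e^{-hd(o,x)}$. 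Therefore in all cases $\lambda_o(\tilde{\Gamma}_\alpha(x)) \lesssim_\alpha e^{-hd(o,x)}$, and
\begin{equation*}
\|\Ss^*_\alpha\mu\|_{L^1(\partial \X)} \lesssim \alpha^h \int_\X e^{-hd(o,x)}\:d\mu(x) <\infty
\end{equation*}
by \eqref{Green_potential_condition}. The only mildly delicate point is this uniform shadow bound, which is already supplied by Lemma \ref{shadow_lemma1} together with \eqref{upper_bound_visual_measure}. The measurability check is routine.

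For (ii), an $L^1$ function is finite $\lambda_o$-a.e., so $\mu(\Gamma_\alpha(\xi))<\infty$ for a.e. $\xi$. For such $\xi$, the sets $\Gamma_\alpha(\xi)\cap A_R$ decrease as $R\to\infty$ to $\bigcap_R A_R=\emptyset$. Continuity from above of the finite measure $\mu|_{\Gamma_\alpha(\xi)}$ then gives $\mu(\Gamma_\alpha(\xi)\cap A_R)\to 0$. Since this holds for each $\alpha$, the a.e. statement holds for all $\alpha>1$, with the null set depending on $\alpha$. If a single null set is wanted, I would note that $\Gamma_\alpha(\xi)$ increases in $\alpha$, so it suffices to take integer $\alpha$ and a countable union of null sets.
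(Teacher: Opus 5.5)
Your proposal is correct and follows the paper's proof: Fubini--Tonelli reduces (i) to $\int_\X \lambda_o(\tilde{\Gamma}_\alpha(x))\,d\mu(x)$, Lemma \ref{shadow_lemma1} with \eqref{upper_bound_visual_measure} bounds the shadow by $e^{-hd(o,x)}$, and (ii) follows at once from (i). Your extra points fill in details the paper leaves implicit: measurability of $(x,\xi)\mapsto\chi_{\Gamma_\alpha(\xi)}(x)$, the case $\alpha e^{-d(o,x)}\ge 1$, and a single null set valid for all $\alpha>1$ via monotonicity of $\Gamma_\alpha(\xi)$ in $\alpha$.
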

\begin{proof}
Let $\mu$ satisfy (\ref{Green_potential_condition}). Now By Fubini-Tonelli's theorem,
\begin{eqnarray*}
\int_{\partial \X} \Ss^*_\alpha\mu(\xi)\:d\lambda_o(\xi) &=& \int_{\partial \X} \int_\X \chi_{\Gamma_\alpha(\xi)}(x)\:d\mu(x) \:d\lambda_o(\xi)\\
&=& \int_\X \left(\int_{\partial \X} \chi_{\tilde\Gamma_\alpha(x)}(\xi)\:d\lambda_o(\xi)\right)\:d\mu(x) \\
&=& \int_\X  \lambda_o\left(\tilde\Gamma_\alpha(x)\right)\:d\mu(x)\:.
\end{eqnarray*}
Thus by Lemma \ref{shadow_lemma1} and the estimate of the visibility measure of visual balls given by (\ref{upper_bound_visual_measure}), we get
\begin{equation*}
\int_{\partial \X} \Ss^*_\alpha\mu(\xi)\:d\lambda_o(\xi) \le \int_\X  \lambda_o\left(\B\left(\eta_x,\:\alpha\:e^{-d(o,x)}\right)\right)\:d\mu(x) \lesssim \int_\X e^{-hd(o,x)}\:d\mu(x)\:,
\end{equation*}
where the implicit constant only depends on $\alpha$ and the intrinsic geometry of $\X$. Now by (\ref{Green_potential_condition}), the last integral is finite and hence $\Ss^*_\alpha\mu \in L^1(\partial \X)$. This completes the proof of part (i).

The part (ii) of the result immediately follows from part (i).
\end{proof}

We are now all set to prove Lemma \ref{polar_lemma}.
\begin{proof}[Proof of Lemma \ref{polar_lemma}]
We recall that
\begin{equation*}
u_1(x):= \int_{B(x,1)} G(x,y)\:\psi(y)\:dvol(y)\:,\:\:x \in \X\:.
\end{equation*}
By H\"older's inequality,
\begin{equation*}
u_1(x) \le \left(\int_{B(x,1)} \psi(y)^p\:dvol(y)\right)^\frac{1}{p}\left(\int_{B(x,1)} G(x,y)^q\:dvol(y)\right)^\frac{1}{q}\:,
\end{equation*}
where $p>n/2$, as in the hypothesis of Theorem \ref{non-tangential_green_thm} and $q$ its H\"older conjugate. Thus $q<n/(n-2)$. Now by the radiality and pointwise estimates of the Green function near the pole (\ref{green_estimate}) and the local growth of the volume density function (\ref{jacobian_estimate}),
\begin{equation*}
\int_{B(x,1)} G(x,y)^q\:dvol(y) \lesssim \int_0^1 r^{n-1-q(n-2)}\:dr\:.
\end{equation*}
Then as noted above, $q<n/(n-2)$ and hence the last integral is finite. Therefore,
\begin{equation*}
u_1(x) \lesssim \left(\int_{B(x,1)} \psi(y)^p\:dvol(y)\right)^\frac{1}{p}\:.
\end{equation*}
Now let $\xi \in \partial \X,\:\alpha >1$ and $R>0$. Then for all $x \in \Gamma_\alpha(\xi) \cap A_{R+1}$, we note that $B(x,1) \subset \Gamma_{e^2\alpha}(\xi) \cap A_R$. Indeed, for any $y \in B(x,1)$, by triangle inequality,
\begin{equation*}
d(o,y) \ge d(o,x)-d(x,y) > (R+1)-1=R\:.
\end{equation*}
Also, as 
\begin{equation*}
(x|\xi)_o \le (y|\xi)_o \:+\: d(x,y) < (y|\xi)_o \:+\: 1\:,
\end{equation*}
we have,
\begin{equation*}
e^{-(y|\xi)_o} < e\:e^{-(x|\xi)_o} < e\:\alpha\:e^{-d(o,x)} \le e^2\alpha\:e^{-d(o,y)}\:.
\end{equation*}
Thus for all $x \in \Gamma_\alpha(\xi) \cap A_{R+1}$,
\begin{equation*}
u_1(x) \lesssim \left(\int_{\Gamma_{e^2\alpha}(\xi) \cap A_R} \psi(y)^p\:dvol(y)\right)^\frac{1}{p}\:.
\end{equation*}
Now under the hypothesis (\ref{Lp_condition}), Lemma \ref{mass_lemma} applies to $d\mu=\psi^p\:dvol$. The result then follows by part (ii) of Lemma \ref{mass_lemma}. This also completes the proof of Theorem \ref{non-tangential_green_thm}.
\end{proof}

\section{Tangential limits of Green potentials and exceptional sets}
\label{sec6}
In this section, we prove:
\begin{theorem} \label{tangential_limit_green_thm}
Let $\X$ and $h$ be as in Theorem \ref{tangential_limit_superharmonic_thm}. Let $\psi$ be a non-negative measurable function on $\X$ such that $\psi\:dvol$ is a Radon measure on $\X$ and $G[\psi]$ is well-defined. If
\begin{equation} \label{Lp_condition'}
\int_{\X} e^{-\beta d(o,x)}\:\psi(x)^p\:dvol(x) < \infty\:,\:\: \text{ for some }\beta \in (0,h),\:\:p>\frac{n}{2}\:,
\end{equation}
then for each $\tau \in (1,h/\beta)$, there exists a set $E \subset \partial \X$ with $\mathcal{H}^{\beta \tau}(E)=0$ such that $G[\psi]$ has tangential limit $0$ of degree $\tau$ at all $\xi \in \partial \X \setminus E\:.$ 
\end{theorem}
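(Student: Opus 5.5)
Following the strategy of Section \ref{sec5}, we split $G[\psi]=u_1+u_2$ into the polar part $u_1(x)=\int_{B(x,1)}G(x,y)\psi(y)\,dvol(y)$ and the non-polar part $u_2$. We then produce exceptional sets $E_1,E_2$ with $\mathcal{H}^{\beta\tau}(E_i)=0$ such that $u_i\to 0$ in $\Gamma_{\alpha,\tau}(\xi)$ for every $\alpha>1$ and every $\xi\notin E_i$, and set $E=E_1\cup E_2$. (It suffices to treat a countable sequence $\alpha_k\uparrow\infty$, since the regions increase with $\alpha$.) All the size estimates will be made through Frostman's lemma (Lemma \ref{frostmann_lemma}). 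Since $\nu$ is only a quasi-metric, we pass to a visual metric $\nu_s$ bi-Lipschitz to $\nu^s$, as in the proof of Theorem \ref{exceptional_sharp_thm}. If a compact $K\subset E_i$ had $\mathcal{H}^{\beta\tau}(K)>0$, we would get a positive measure $\sigma$ on $K$ with $\sigma(\B(\xi,r))\lesssim r^{\beta\tau}$ for the quasi-metric $\nu$. The goal is then to show $\sigma(E_i)=0$ for every such $\sigma$. This handles compact subsets; to pass to $E_i$ itself (for instance via Borel regularity, or a $G_\delta$/$\limsup$ description of $E_i$) we use that $\mathcal{H}^{t}$ of a Borel or Suslin set is approximated from inside by compacts.

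\textbf{Polar part.} Hölder's inequality with $p>n/2$, exactly as in the proof of Lemma \ref{polar_lemma}, gives $u_1(x)\lesssim\big(\int_{B(x,1)}\psi^p\big)^{1/p}$. Moving from $x$ to $y\in B(x,1)$ changes Gromov products and distances by at most $1$, so $x\in\Gamma_{\alpha,\tau}(\xi)\cap A_{R+1}$ implies $B(x,1)\subset\Gamma_{e^{\tau+1}\alpha,\tau}(\xi)\cap A_R$. It therefore suffices to have $\mu(\Gamma_{\alpha',\tau}(\xi))<\infty$, where $d\mu=\psi^p\,dvol$, outside a set of zero $\mathcal{H}^{\beta\tau}$-measure. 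Fix $\sigma$ as above and apply Fubini:
$$\int\mu(\Gamma_{\alpha',\tau}(\xi))\,d\sigma(\xi)=\int_\X\sigma(\tilde\Gamma_{\alpha',\tau}(x))\,d\mu(x).$$
The tangential analogue of Lemma \ref{shadow_lemma1} should give $\tilde\Gamma_{\alpha',\tau}(x)\subset\B(\eta_x,(\alpha'e^{-d(o,x)})^{1/\tau})$, because $(x|\xi)_o\le(\eta_x|\xi)_o$. Hence $\sigma(\tilde\Gamma_{\alpha',\tau}(x))\lesssim e^{-\beta d(o,x)}$, and the integral is finite by (\ref{Lp_condition'}). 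So $\mu(\Gamma_{\alpha',\tau}(\xi))<\infty$ for $\sigma$-a.e. $\xi$, and the tail $\mu(\Gamma_{\alpha',\tau}(\xi)\cap A_R)\to0$ as $R\to\infty$ there. Taking $E_1$ to be the set where this mass is infinite (a Borel set), we get $\sigma(E_1)=0$ for every Frostman measure $\sigma$, and hence $\mathcal{H}^{\beta\tau}(E_1)=0$.

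\textbf{Non-polar part.} This is the step I expect to be the main obstacle, since the Poisson-kernel/maximal-function argument of Lemma \ref{non-polar_lemma} is inherently non-tangential. I would instead bound $u_2(x)\lesssim\int e^{-hd(x,y)}\psi(y)\,dvol(y)$ directly. Write $e^{-hd(x,y)}=e^{-h d(o,x)-hd(o,y)+2h(x|y)_o}$, and split the $y$-integral dyadically according to the size of $(x|y)_o$ relative to $(x|\xi)_o$, using the four-point condition (\ref{four_pt_condition'}). Since $\psi\in L^1(e^{-hd}dvol)$ is only known through (\ref{Lp_condition'}), I would use Hölder against the weight to convert to $e^{-\beta d(o,y)}\psi^p$. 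The target is a bound of the form
$$\limsup_{x\to\xi,\;x\in\Gamma_{\alpha,\tau}(\xi)}u_2(x)\lesssim \limsup_{R\to\infty}\int_{A_R}k_\tau(\xi,y)\,d\mu(y),$$
where the kernel $k_\tau(\xi,y)$ is an energy-type kernel built from $e^{\beta\tau(\xi|\eta_y)_o}$-like factors and $\mu=e^{-\beta d(o,\cdot)}\psi^p\,dvol$ (or $\psi\,dvol$). Taking $E_2$ to be the set where $\int k_\tau(\xi,y)\,d\mu(y)=\infty$, Fubini against a Frostman measure $\sigma$ reduces $\sigma(E_2)=0$ to the uniform estimate $\sup_y\int k_\tau(\xi,y)\,d\sigma(\xi)<\infty$. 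I expect this to follow by summing over visual annuli, using $\sigma(\B(\xi,r))\lesssim r^{\beta\tau}$ together with $\tau<h/\beta$, with a geometric series converging as in Lemma \ref{maximal_fn_lem}. The delicate point is designing $k_\tau$ so that both its pointwise domination of $u_2$ on the tangential regions and its $\sigma$-integrability hold. The paper hints that $\varphi_{i\beta'}$-type spherical function estimates (\ref{spherical_fn_estimate}) control these energies, and I would try that route if the direct annular summation fails.
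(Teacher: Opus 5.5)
Your treatment of the polar part $u_1$ is correct and is essentially the paper's (Corollary \ref{Cor3}, via Lemma \ref{prop3}(i)). Bounding $\sigma(\tilde\Gamma_{\alpha',\tau}(x))\lesssim e^{-\beta d(o,x)}$ directly is even slightly cleaner, and your remarks on countably many apertures and on measurability are fine.

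The genuine gap is the non-polar part $u_2$. You never construct the $x$-independent kernel $k_\tau$, and the kernel you sketch cannot work with a single exponent. A kernel of size $e^{\beta\tau(\xi|\cdot)_o}$ against $e^{-\beta d(o,y)}\psi(y)^p\,dvol(y)$ sits exactly at the critical exponent of the Frostman bound $\sigma(\B(\eta,r))\lesssim r^{\beta\tau}$. Each visual annulus $\{(\xi|\eta_y)_o\approx k\}$ then contributes $O(1)$, so there is no geometric decay, and the restriction $\tau<h/\beta$ does not supply any.
\begin{itemize}
\item With $\eta_y$ in place of $y$ this is a critical Riesz energy of $\sigma$, which can be infinite.
\item With the natural truncation $(y|\xi)_o\le d(o,y)$, the annular bound still loses a factor comparable to $d(o,y)$, which (\ref{Lp_condition'}) does not control.
\end{itemize}
So your reduction to $\sup_y\int k_\tau(\xi,y)\,d\sigma(\xi)<\infty$ is not established.

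The paper closes this with three ingredients your outline lacks.

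\textbf{1. H\"older with a free exponent and the energy estimate.} Apply H\"older to the factorization
$e^{2h(x|y)_o}e^{-hd(o,y)}\psi(y)=\bigl[e^{(2h-\beta'/p)(x|y)_o}e^{-(h-\beta'/p)d(o,y)}\bigr]\bigl[e^{\frac{\beta'}{p}((x|y)_o-d(o,y))}\psi(y)\bigr]$,
with $\beta'\in(0,h)$ free. The conjugate factor is controlled by Lemma \ref{energy_estimates}: $\int_\X e^{(a+t)(x|y)_o}e^{-ad(o,y)}\,dvol(y)\lesssim e^{td(o,x)}$ for $a>h$. This is proved via polar coordinates and the asymptotics (\ref{spherical_fn_estimate}) of $\varphi_{it/2}$. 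With $a=(h-\beta'/p)q$ and $t=hq$ (note $a>h$ iff $\beta'<h$), it gives exactly $e^{hd(o,x)}$, which cancels the prefactor of $H_\psi$.

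\textbf{2. Eliminating $x$.} For $x\in\Gamma_{\alpha,\tau}(\xi)$ one has $e^{(x|y)_o}\lesssim e^{\tau(y|\xi)_o}$ for every $y$. This follows from the four-point condition together with $(x|y)_o\le d(o,x)$, and it removes $x$ from the remaining factor.

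\textbf{3. Two exponents, which is what defeats the critical case.} Take $\beta'=\beta_1<\beta$ on $y\in\Gamma_{\alpha,\tau}(\xi)$ and $\beta'=\beta_2\in(\beta,h)$ on the complement. This choice is free, since H\"older is applied separately on the two regions. For a Frostman measure $\sigma$ one then gets:
\begin{itemize}
\item $\int_{\tilde\Gamma_{\alpha,\tau}(y)}e^{\tau\beta_1(y|\xi)_o}\,d\sigma(\xi)\lesssim e^{(\beta_1-\beta)d(o,y)}\sum_k 2^{-(\beta-\beta_1)\tau k}$;
\item for $d(o,y)$ large, $\int_{\partial\X\setminus\tilde\Gamma_{\alpha,\tau}(y)}e^{\tau\beta_2(y|\xi)_o}\,d\sigma(\xi)\lesssim\sum_{k\lesssim d(o,y)/\tau}e^{\tau(\beta_2-\beta)k}\lesssim e^{(\beta_2-\beta)d(o,y)}$.
\end{itemize}
After multiplying by $e^{-\beta_i d(o,y)}$, both are $\lesssim e^{-\beta d(o,y)}$ (Lemma \ref{prop3}), and (\ref{Lp_condition'}) finishes.

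Together these give, for $x\in\Gamma_{\alpha,\tau}(\xi)$, $H_\psi(x)^p\lesssim C_R^p e^{-hpd(o,x)}+\int_{A_R}k(\xi,y)\psi(y)^p\,dvol(y)$. Here $k$ is independent of $x$ and has the uniform Frostman bound above. This is precisely the estimate your outline needs but does not supply.
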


Before arriving at the proof of Theorem \ref{tangential_limit_green_thm}, we will require some preparation. Henceforth, in this section, we work under the hypothesis of Theorem \ref{tangential_limit_green_thm}.

\begin{lemma} \label{energy_estimates}
For $\alpha>h,\:t>0,\: x \in \X\:,$ we have the estimate
\begin{equation*}
\int_{\X} e^{(\alpha +t)(x|y)_o}\:e^{-\alpha d(o,y)}\:dvol(y) \lesssim e^{td(o,x)}\:,
\end{equation*}
where the implicit constant only depends on the parameters $\alpha,\:t$ and the intrinsic geometry of $\X$.
\end{lemma}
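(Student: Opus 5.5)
Let $s:=d(o,x)$. The plan is to split $\X$ into the ball $B(o,s)$ and its complement, and in each region exploit two things: the elementary bound $(x|y)_o \le \min\{d(o,x),d(o,y)\}$, and the fact that the set where the Gromov product $(x|y)_o$ is large is a thin ``shadow cone''. Only the crude parts of the integral follow from volume growth; the heart of the argument is estimating the volume of the sets where $(x|y)_o \ge k$.

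\textbf{Step 1: level-set bound.} For $R>0$ and integer $0\le k\le \min\{R,s\}$, set
\begin{equation*}
S_{R,k}:=\{y \in \X : R\le d(o,y)<R+1,\ (x|y)_o\ge k\}.
\end{equation*}
We aim to show $vol(S_{R,k}) \lesssim e^{hR}e^{-hk}$. By Gromov hyperbolicity, $(x|y)_o \ge k$ implies that the geodesic $[o,y]$ passes within $C\delta$ of $\gamma_{o,x}(k)$. So the radial projection $\eta_y$ of $y$ to $\partial\X$ lies in a visual ball $\B(\eta_x, Ce^{-k})$. Using polar coordinates around $o$, the Jacobian estimate (\ref{jacobian_estimate}), and the fact that $\lambda_o$ is the pushforward of $\theta_o$, we get
\begin{equation*}
vol(S_{R,k}) \lesssim e^{hR}\,\lambda_o\left(\overline{\B(\eta_x,Ce^{-k})}\right) \lesssim e^{hR-hk},
\end{equation*}
where the last step is (\ref{upper_bound_visual_measure}). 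This is the step I expect to be the main obstacle: converting the condition $(x|y)_o\ge k$ into membership of $\eta_y$ in a visual ball, uniformly in $R\ge k$. Here one needs monotonicity of Gromov products along geodesic rays together with the $2\delta$-four-point condition (\ref{four_pt_condition'}), applied to the triple $x,y,\eta_y$. As a fallback, I would run the same argument with a sector in $T^1_o\X$ instead of visual balls, using Lemma \ref{shadow_lemma1}-type reasoning.

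\textbf{Step 2: summation.} Decompose the integral into shells $R\le d(o,y)<R+1$ and Gromov levels $k\le (x|y)_o<k+1$ with $k\le\min\{R,s\}$. The shell/level piece contributes at most
\begin{equation*}
e^{(\alpha+t)(k+1)}e^{-\alpha R}\,vol(S_{R,k}) \lesssim e^{(\alpha+t-h)k}\,e^{-(\alpha-h)R}.
\end{equation*}
Since $\alpha>h$, the sum over $R\ge k$ gives $\lesssim e^{(\alpha+t-h)k}e^{-(\alpha-h)k}=e^{tk}$. Summing over $0\le k\le s$ then gives $\lesssim e^{ts}$, because $t>0$ makes the geometric series dominated by its last term. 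The compact piece $R<1$ is trivially bounded by a constant, which is $\le e^{ts}$.

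Putting the two regions together yields the claimed bound $\lesssim e^{td(o,x)}$, with constants depending only on $\alpha,t,h,\delta$.
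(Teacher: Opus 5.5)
Your proof is correct, but it takes a different route from the paper's.

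\textbf{Common ground.} Both proofs use polar coordinates about $o$ and the volume density estimate (\ref{jacobian_estimate}).

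\textbf{The paper's route.} The paper works direction by direction. It uses
\[
(x|\gamma_v(r))_o \le \min\{r,(x|\gamma_v(\infty))_o\},
\]
together with the matching lower bound up to $2\delta$, to integrate out the radial variable; this is where $\alpha>h$ enters. That collapses the volume integral to the boundary energy
\[
\int_{\partial\X} e^{(h+t)(x|\xi)_o}\,d\lambda_o(\xi).
\]
This energy is then identified exactly as $e^{\frac{h+t}{2}d(o,x)}\varphi_{it/2}(x)$ and estimated by the Harish-Chandra asymptotics (\ref{spherical_fn_estimate}).

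\textbf{Your route.} You do a layer-cake decomposition into shells and Gromov-product levels, and the only geometric input is the shadow bound (\ref{upper_bound_visual_measure}). In effect you bound the same boundary energy by
\[
\sum_{k\le s} e^{(h+t)k}\,\lambda_o(\{(x|\xi)_o\ge k\}) \lesssim \sum_{k\le s} e^{tk}.
\]

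\textbf{What each buys.} Your argument is more elementary and more robust. It needs no spectral input (no spherical functions or Harish-Chandra expansion), only exponential volume density and an upper shadow estimate. It also uses only upper bounds on Gromov products. The paper's argument gives more: a two-sided estimate $\asymp e^{td(o,x)}$ for the boundary energy, and the link to $\varphi_\lambda$ that the author emphasises.

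\textbf{The step you flag as the obstacle.} It is easier than you fear. Gromov products are nondecreasing along geodesic rays from $o$ in each argument, so $(\eta_x|\eta_y)_o\ge (x|y)_o$ directly. Hence $\nu(\eta_x,\eta_y)\le e^{-k}$ on $S_{R,k}$ with no $\delta$-loss, exactly as in Lemma \ref{shadow_lemma1}. Neither the thin-triangle argument nor your fallback is needed.
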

\begin{proof}
We fix $x \in \X$. By polar decomposition of the volume measure, we have
\begin{equation*}
\int_{\X} e^{(\alpha +t)(x|y)_o}\:e^{-\alpha d(o,y)}\:dvol(y) \lesssim \int_{T^1_o\X}\int_0^\infty e^{(\alpha +t)\left(x|\gamma_v(r)\right)_o}\:e^{(h-\alpha) r}\:dr\:d\theta_o(v)\:.
\end{equation*}
Consider the geodesic segment $\gamma_v$ that joins $o$ to $\gamma_v(r)$ and extend it to $\partial \X$. This extended infinite geodesic ray meets $\partial \X$ at a unique point, say $\gamma_v(\infty)$. Then by (\ref{four_pt_condition'}), we note that
\begin{eqnarray} \label{energy_estimates_eq1}
\left(x|\gamma_v(r)\right)_o &\ge & \min\left\{\left(x|\gamma_v(\infty)\right)_o,\left(\gamma_v(r)|\gamma_v(\infty)\right)_o\right\}-2\delta \nonumber\\
&=& \min\left\{\left(x|\gamma_v(\infty)\right)_o,r\right\}-2\delta\:.
\end{eqnarray}
This prompts us to decompose the last integral as,
\begin{eqnarray*}
&&\int_{T^1_o\X}\int_0^\infty e^{(\alpha +t)\left(x|\gamma_v(r)\right)_o}\:e^{(h-\alpha) r}\:dr\:d\theta_o(v)\\
&=& \int_{T^1_o\X}\int_0^{\left(x|\gamma_v(\infty)\right)_o} e^{(\alpha +t)\left(x|\gamma_v(r)\right)_o}\:e^{(h-\alpha) r}\:dr\:d\theta_o(v) \\
&&+ \int_{T^1_o\X}\int_{\left(x|\gamma_v(\infty)\right)_o}^\infty e^{(\alpha +t)\left(x|\gamma_v(r)\right)_o}\:e^{(h-\alpha) r}\:dr\:d\theta_o(v) \\
&=:& I_1(x) \:+\:I_2(x)\:.
\end{eqnarray*}
We first estimate $I_1(x)$. In this case, $r \in \left(0,\left(x|\gamma_v(\infty)\right)_o\right)$ and hence by (\ref{energy_estimates_eq1}),
\begin{equation*}
r-2\delta \le \left(x|\gamma_v(r)\right)_o \le r\:,
\end{equation*}
where the upper bound is always true by the triangle inequality. Plugging the above in the definition of $I_1(x)$, we get
\begin{eqnarray*}
I_1(x) & \asymp & \int_{T^1_o\X}\int_0^{\left(x|\gamma_v(\infty)\right)_o} e^{(h+t) r}\:dr\:d\theta_o(v) \\
& \lesssim & \int_{T^1_o\X} e^{(h+t)\left(x|\gamma_v(\infty)\right)_o }\:d\theta_o(v)\:.
\end{eqnarray*}
Now as the visibility measure $\lambda_o$ on $\partial \X$ is the pushforward of $\theta_o$ on $T^1_o\X$ under the radial projection, we have that
\begin{equation*}
\int_{T^1_o\X} e^{(h+t)\left(x|\gamma_v(\infty)\right)_o }\:d\theta_o(v) = \int_{\partial \X} e^{(h+t)\left(x|\xi\right)_o }\:d\lambda_o(\xi)\:,
\end{equation*}
and hence,
\begin{equation} \label{energy_estimates_eq2}
I_1(x) \lesssim  \int_{\partial \X} e^{(h+t)\left(x|\xi\right)_o }\:d\lambda_o(\xi)\:.
\end{equation}
Similarly, for $I_2(x)$, as $r \in \left[\left(x|\gamma_v(\infty)\right)_o,\infty\right)$, it follows from (\ref{energy_estimates_eq1}) that
\begin{equation*}
\left(x|\gamma_v(\infty)\right)_o -2\delta \le \left(x|\gamma_v(r)\right)_o \le \left(x|\gamma_v(\infty)\right)_o\:,
\end{equation*}
where the upper bound is always true by the monotonicity of Gromov products along geodesic rays. Plugging the above in the definition of $I_2(x)$ and by Fubini-Tonelli, we have
\begin{eqnarray*}
I_2(x) &\asymp & \int_{T^1_o\X} e^{(\alpha +t)\left(x|\gamma_v(\infty)\right)_o} \left(\int_{\left(x|\gamma_v(\infty)\right)_o}^\infty e^{(h-\alpha) r}\:dr \right)\:d\theta_o(v) \\
& \asymp & \int_{T^1_o\X} e^{(h +t)\left(x|\gamma_v(\infty)\right)_o} \:d\theta_o(v)\:.
\end{eqnarray*}
Thus again,
\begin{equation} \label{energy_estimates_eq3}
I_2(x) \lesssim \int_{\partial \X} e^{(h+t)\left(x|\xi\right)_o }\:d\lambda_o(\xi)\:.
\end{equation}
Combining (\ref{energy_estimates_eq2}) and (\ref{energy_estimates_eq3}), we get
\begin{equation} \label{energy_estimates_eq4}
\int_{\X} e^{(\alpha +t)(x|y)_o}\:e^{-\alpha d(o,y)}\:dvol(y) \lesssim \int_{\partial \X} e^{(h+t)\left(x|\xi\right)_o }\:d\lambda_o(\xi)\:.
\end{equation}
Now recalling,
\begin{equation*}
(x|\xi)_o = \frac{1}{2}\left[d(o,x)-B_\xi(x)\right]\:,
\end{equation*}
we note that
\begin{equation*}
\int_{\partial \X} e^{(h+t)\left(x|\xi\right)_o }\:d\lambda_o(\xi) = e^{\left(\frac{h+t}{2}\right)d(o,x)} \int_{\partial \X} e^{-\left(\frac{h+t}{2}\right)B_\xi(x)}\:d\lambda_o(\xi)\:.
\end{equation*}
Thus by the formula for the spherical functions (\ref{spherical_fn}), we identify that
\begin{equation*}
\int_{\partial \X} e^{(h+t)\left(x|\xi\right)_o }\:d\lambda_o(\xi) = e^{\left(\frac{h+t}{2}\right)d(o,x)}\varphi_{i\frac{t}{2}}(x)\:,
\end{equation*}
which upon plugging in the estimates (\ref{spherical_fn_estimate}) yields
\begin{equation} \label{energy_estimates_eq5}
\int_{\partial \X} e^{(h+t)\left(x|\xi\right)_o }\:d\lambda_o(\xi) \asymp e^{td(o,x)}\:.
\end{equation}
Then plugging (\ref{energy_estimates_eq5}) in (\ref{energy_estimates_eq4}) yields the result.
\end{proof}

Next, for $\psi$, a non-negative measurable function on $\X$, we introduce the notation,
\begin{equation} \label{related_harmonic_fn}
H_\psi(x):= e^{-hd(o,x)}\int_\X e^{2h(x|y)_o}\:e^{-hd(o,y)}\:\psi(y)\:dvol(y)\:,\:\:x\in \X\:,
\end{equation}
whenever the integral exists. Our next lemma, studies the tangential boundary behaviour of $H_\psi$:
\begin{lemma} \label{prop2}
Let $\psi$ be a non-negative measurable function on $\X$ such that $\psi\:dvol$ is a Radon measure. If $\xi \in \partial \X$ is such that each of the integrals 
\begin{equation*}
\int_{\Gamma_{\alpha,\tau}(\xi)} e^{\tau \beta_1(x|\xi)_o}\:e^{-\beta_1 d(o,x)}\:\psi(x)^p\:dvol(x)\:\: \text{ and } \int_{\X \setminus \Gamma_{\alpha,\tau}(\xi)} e^{\tau \beta_2(x|\xi)_o}\:e^{-\beta_2 d(o,x)}\:\psi(x)^p\:dvol(x)
\end{equation*}
is finite for some $p>1$ and some $\beta_1,\beta_2 \in (0,h),\:\alpha>1,\:\tau>1$, then
\begin{equation*}
\lim_{\substack{x \to \xi \\ x \in \Gamma_{\alpha,\tau}(\xi)}} H_\psi(x)=0\:.
\end{equation*}
\end{lemma}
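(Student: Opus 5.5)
The plan is to split the integral defining $H_\psi(x)$ into the part over $\Gamma_{\alpha,\tau}(\xi)$ and the part over $\X\setminus\Gamma_{\alpha,\tau}(\xi)$, and to apply H\"older's inequality on each piece with exponents $p$ and $q=p/(p-1)$. The $\psi$-factor is chosen so that its $p$-th power is exactly the finite integrand from the hypothesis. The complementary $q$-factor is then controlled by Lemma \ref{energy_estimates}, and the aim is to have it produce precisely $e^{hd(o,x)}$, which cancels the prefactor $e^{-hd(o,x)}$. To get smallness and not just boundedness, I first fix $R>0$ and remove $B(o,R)$. On $B(o,R)$ we have $(x|y)_o\le d(o,y)<R$, so that part of $H_\psi(x)$ is at most $e^{-hd(o,x)}e^{2hR}\|\psi\|_{L^1(B(o,R))}$, which tends to $0$ as $x\to\xi$. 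On $A_R$, the $p$-parts are tails of the two convergent integrals in the hypothesis, so they tend to $0$ as $R\to\infty$, uniformly in $x$.

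On $\Gamma_{\alpha,\tau}(\xi)\cap A_R$ I write the integrand as
$$\left[e^{\frac{\tau\beta_1}{p}(y|\xi)_o}e^{-\frac{\beta_1}{p}d(o,y)}\psi(y)\right]\cdot\left[e^{2h(x|y)_o}e^{-(h-\frac{\beta_1}{p})d(o,y)}e^{-\frac{\tau\beta_1}{p}(y|\xi)_o}\right].$$
Since $y\in\Gamma_{\alpha,\tau}(\xi)$, we have $e^{-\tau(y|\xi)_o}<\alpha e^{-d(o,y)}$. Hence the $q$-th power of the second bracket is at most $\alpha^{\beta_1q/p}e^{2hq(x|y)_o}e^{-hq\,d(o,y)}$. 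Lemma \ref{energy_estimates}, applied with exponent $hq>h$ and $t=hq$, bounds its integral by $e^{hq\,d(o,x)}$. Taking the $1/q$-th root gives $e^{hd(o,x)}$, which cancels the prefactor. This part is therefore $\lesssim$ (the $A_R$-tail of the first hypothesis integral)$^{1/p}$.

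On the complement $(\X\setminus\Gamma_{\alpha,\tau}(\xi))\cap A_R$ I use the same splitting with $\beta_2$ in place of $\beta_1$. Now $e^{-\tau(y|\xi)_o}$ has no useful upper bound from $y$, so I instead use (\ref{four_pt_condition'}) in the form $(y|\xi)_o\ge\min\{(x|y)_o,(x|\xi)_o\}-2\delta$. This gives
$$e^{-\frac{\tau\beta_2q}{p}(y|\xi)_o}\lesssim e^{-\frac{\tau\beta_2q}{p}(x|y)_o}+e^{-\frac{\tau\beta_2q}{p}(x|\xi)_o}.$$
\begin{itemize}
\item \emph{Second term.} Since $x\in\Gamma_{\alpha,\tau}(\xi)$, it is at most $\alpha^{\beta_2q/p}e^{-\frac{\beta_2q}{p}d(o,x)}$. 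Lemma \ref{energy_estimates}, with exponent $a=hq-\beta_2q/p=h+(q-1)(h-\beta_2)>h$ and $t=2hq-a$, bounds the remaining integral by $e^{(hq+\beta_2q/p)d(o,x)}$. The product is again $e^{hq\,d(o,x)}$, which cancels exactly after taking roots.
\item \emph{First term.} It gives $\int e^{(2hq-\tau\beta_2q/p)(x|y)_o}e^{-a\,d(o,y)}\,dvol(y)$. If $t':=2hq-\tau\beta_2q/p-a=hq-(\tau-1)\beta_2q/p$ is positive, Lemma \ref{energy_estimates} bounds this by $e^{t'd(o,x)}$, which is at most $e^{hq\,d(o,x)}$. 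If $t'\le0$, I bound $e^{t'(x|y)_o}\le1$ and use $\int_\X e^{-a d(o,y)}\,dvol<\infty$ instead. In both cases the $1/q$-th root is $\lesssim e^{hd(o,x)}$.
\end{itemize}
So the complement part is $\lesssim$ (the $A_R$-tail of the second hypothesis integral)$^{1/p}$, uniformly in $x\in\Gamma_{\alpha,\tau}(\xi)$.

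Combining the three pieces, $\limsup H_\psi(x)$ as $x\to\xi$ within $\Gamma_{\alpha,\tau}(\xi)$ is bounded by a constant times the $1/p$-th powers of the two tails. Letting $R\to\infty$ then gives the limit $0$. The main obstacle is the complement region. There the tangential-region bound runs the wrong way for $y$, and the proof depends on the four-point splitting plus checking that each resulting exponent is admissible in Lemma \ref{energy_estimates}, namely $a>h$ and $t>0$, or else falls into the trivial case. Getting these exponent choices to cancel $e^{-hd(o,x)}$ exactly is where I expect to have to be careful.
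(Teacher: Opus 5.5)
Your proof is correct apart from one slip in the complement estimate, noted at the end. It organizes the comparison between $(x|y)_o$ and $(y|\xi)_o$ differently from the paper.

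\textbf{How the paper does it.} The paper applies H\"older with the $x$-dependent weight $e^{\beta_i(x|y)_o}e^{-\beta_i d(o,y)}$ on the $\psi^p$ side. The conjugate factor $\int_{A_R} e^{(2h-\beta_i/p)q(x|y)_o}e^{-(h-\beta_i/p)q\,d(o,y)}\,dvol(y)$ is then the same on both regions. A single application of Lemma \ref{energy_estimates} handles it; this is where $\beta_i<h$ enters, to get $(h-\beta_i/p)q>h$. The paper then uses one pointwise inequality, valid for every $y\in\X$ once $x\in\Gamma_{\alpha,\tau}(\xi)$. The four-point condition, $(x|y)_o\le d(o,x)$ and $\tau>1$ give $e^{-(y|\xi)_o}\lesssim e^{-(x|y)_o/\tau}$, i.e. $e^{(x|y)_o}\lesssim e^{\tau(y|\xi)_o}$. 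This converts both $\psi^p$-integrals into the hypothesis integrals at once.

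\textbf{How yours differs.} You put the $x$-independent hypothesis weight into H\"older, so the factor $e^{-\frac{\tau\beta_i q}{p}(y|\xi)_o}$ lands in the conjugate integral instead. On $\Gamma_{\alpha,\tau}(\xi)$ you absorb it using the defining inequality of the region applied to $y$. That bound holds for all $x\in\X$ and does not need $\beta_1<h$. On the complement you end up re-deriving the four-point comparison inside the conjugate factor. This costs two applications of Lemma \ref{energy_estimates} and a case split. If you insert the paper's inequality $e^{-(y|\xi)_o}\lesssim e^{-(x|y)_o/\tau}$ there instead, your two terms collapse into one and you get exactly the paper's conjugate integral.

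\textbf{The slip.} When $t'\le 0$, bounding $e^{t'(x|y)_o}\le 1$ leaves $\int_\X e^{a(x|y)_o}e^{-a\,d(o,y)}\,dvol(y)$, not $\int_\X e^{-a\,d(o,y)}\,dvol(y)$. Your reduction is valid only in the sub-case $a+t'\le 0$. The repair is short: since $(x|y)_o\ge 0$ and $t'\le hq$ always, $e^{(a+t')(x|y)_o}\le e^{(a+hq)(x|y)_o}$. Lemma \ref{energy_estimates} with $t=hq$ then gives $\lesssim e^{hq\,d(o,x)}$ in every case, so no case distinction is needed.
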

\begin{proof}
For $R>0$, let $A_R:= \X \setminus B(o,R)$. Then decomposing the integral in (\ref{related_harmonic_fn}) into $B(o,R)$ and $A_R$ and then proceeding as in the proof of Lemma \ref{non-polar_lemma} for the geodesic ball $B(o,R)$, we get that
\begin{equation*}
H_\psi(x) \le C_R\:e^{-hd(o,x)}\:+\:e^{-hd(o,x)} \int_{A_R} e^{2h(x|y)_o}\:e^{-hd(o,y)}\:\psi(y)\:dvol(y)\:.
\end{equation*}
Now by H\"older's inequality, we have
\begin{eqnarray*}
&&\int_{A_R \cap \Gamma_{\alpha,\tau}(\xi)} e^{2h(x|y)_o}\:e^{-hd(o,y)}\:\psi(y)\:dvol(y) \\
&\le & \left(\int_{A_R} e^{\left(2h-\frac{\beta_1}{p}\right)q(x|y)_o} \: e^{-\left(h-\frac{\beta_1}{p}\right)qd(o,y)}\:dvol(y)\right)^{\frac{1}{q}} \\
&& \times \left(\int_{A_R \cap \Gamma_{\alpha,\tau}(\xi)} e^{\beta_1(x|y)_o}\:e^{-\beta_1d(o,y)}\:\psi(y)^p\:dvol(y)\right)^{\frac{1}{p}}\:.
\end{eqnarray*}
Now by Lemma \ref{energy_estimates}, we have
\begin{equation*}
\left(\int_{A_R} e^{\left(2h-\frac{\beta_1}{p}\right)q(x|y)_o} \: e^{-\left(h-\frac{\beta_1}{p}\right)qd(o,y)}\:dvol(y)\right)^{\frac{1}{q}} \lesssim e^{hd(o,x)}\:,
\end{equation*}
and hence,
\begin{eqnarray*}
&&\int_{A_R \cap \Gamma_{\alpha,\tau}(\xi)} e^{2h(x|y)_o}\:e^{-hd(o,y)}\:\psi(y)\:dvol(y) \\
& \lesssim &  e^{hd(o,x)} \left(\int_{A_R \cap \Gamma_{\alpha,\tau}(\xi)} e^{\beta_1(x|y)_o}\:e^{-\beta_1d(o,y)}\:\psi(y)^p\:dvol(y)\right)^{\frac{1}{p}}\:.
\end{eqnarray*}
A similar estimate also holds for the integral over $A_R \setminus \Gamma_{\alpha,\tau}(\xi)$  and with $\beta_1$ replaced by $\beta_2$. 

Now by (\ref{four_pt_condition'}), we have for any $x,y \in \X$,
\begin{equation*}
e^{-(y|\xi)_o} \le e^{2\delta}\left(e^{-(x|y)_o}\:+\:e^{-(x|\xi)_o}\right)\:.
\end{equation*}
Now if $x \in \Gamma_{\alpha,\tau}(\xi)$, we have
\begin{equation*}
e^{-(x|\xi)_o} < \alpha^{1/\tau} e^{-\frac{d(o,x)}{\tau}}\:,
\end{equation*}
and furthermore, by the triangle inequality, as $(x|y)_o \le d(o,x)$, we get that
\begin{equation*}
e^{-(x|\xi)_o} < \alpha^{1/\tau} e^{-\frac{(x|y)_o}{\tau}}\:.
\end{equation*}
Therefore,
\begin{equation*}
e^{-(y|\xi)_o} < e^{2\delta}\left(e^{-(x|y)_o}\:+\:\alpha^{1/\tau} e^{-\frac{(x|y)_o}{\tau}}\right)\:.
\end{equation*}
Now as $\tau>1$, we have
\begin{equation*}
e^{-(y|\xi)_o} \lesssim e^{-\frac{(x|y)_o}{\tau}}\:,
\end{equation*}
and thus
\begin{equation*}
e^{(x|y)_o} \lesssim e^{\tau(y|\xi)_o}\:,
\end{equation*}
which yields, for $x \in \Gamma_{\alpha,\tau}(\xi)$,
\begin{eqnarray} \label{prop2_pf_eq}
H^p_\psi(x) & \le & 3^{p-1}\:C^p_R\:e^{-hp\:d(o,x)}\:+\:C_2\int_{A_R \cap \Gamma_{\alpha,\tau}(\xi)} e^{\tau \beta_1 (y|\xi)_o}\:e^{-\beta_1d(o,y)}\:\psi(y)^p\:dvol(y) \nonumber\\
&& +\: C_3\int_{A_R \setminus \Gamma_{\alpha,\tau}(\xi)} e^{\tau \beta_2 (y|\xi)_o}\:e^{-\beta_2d(o,y)}\:\psi(y)^p\:dvol(y)\:,
\end{eqnarray}
for some constants $C_2,C_3>0$. We note that, by our hypothesis,
\begin{eqnarray*}
&&\lim_{R \to \infty} \int_{A_R \cap \Gamma_{\alpha,\tau}(\xi)} e^{\tau \beta_1 (y|\xi)_o}\:e^{-\beta_1d(o,y)}\:\psi(y)^p\:dvol(y) \\
=&& \lim_{R \to \infty} \int_{A_R \setminus \Gamma_{\alpha,\tau}(\xi)} e^{\tau \beta_2 (y|\xi)_o}\:e^{-\beta_2d(o,y)}\:\psi(y)^p\:dvol(y) \\
=&&0\:.
\end{eqnarray*}
Thus given $\varepsilon>0$, there exists $R>0$ such that
\begin{eqnarray*}
&&C_2\int_{A_R \cap \Gamma_{\alpha,\tau}(\xi)} e^{\tau \beta_1 (y|\xi)_o}\:e^{-\beta_1d(o,y)}\:\psi(y)^p\:dvol(y) \\
&&+ C_3\int_{A_R \setminus \Gamma_{\alpha,\tau}(\xi)} e^{\tau \beta_2 (y|\xi)_o}\:e^{-\beta_2d(o,y)}\:\psi(y)^p\:dvol(y) \\
&<& \varepsilon\:.
\end{eqnarray*}
For this $R$, we thus have by (\ref{prop2_pf_eq}),
\begin{equation*}
\limsup_{\substack{x \to \xi \\ x \in \Gamma_{\alpha,\tau}(\xi)}} H^p_\psi(x) \le \varepsilon\:.
\end{equation*}
This completes the proof.
\end{proof}

Our next result addresses the size of the sets on $\partial \X$, where the integrals in Lemma \ref{prop2} fail to be finite:
\begin{lemma} \label{prop3}
Let $\mu$ be a Radon measure on $\X$ satisfying 
\begin{equation} \label{beta-hypothesis}
\int_\X e^{-\beta\:d(o,x)}\:d\mu(x) <\infty\:,
\end{equation}
for some $\beta \in (0,h)$. Let $\tau \in (1,h/\beta)$ and $\alpha>1$. Then
\begin{itemize}
\item[(i)] for any $\beta_1 \in (0,\beta)$,
\begin{equation*}
\mathcal{H}^{\beta \tau}\left(\left\{\xi \in \partial \X : \int_{\Gamma_{\alpha,\tau}(\xi)} e^{\tau \beta_1(x|\xi)_o}\:e^{-\beta_1 d(o,x)}\:d\mu(x)=\infty \right\}\right)=0\:;
\end{equation*}
\item[(ii)] for any $\beta_2 \in (\beta,h]$,
\begin{equation*}
\mathcal{H}^{\beta \tau}\left(\left\{\xi \in \partial \X : \int_{\X \setminus \Gamma_{\alpha,\tau}(\xi)} e^{\tau \beta_2(x|\xi)_o}\:e^{-\beta_2 d(o,x)} \:d\mu(x)=\infty \right\}\right)=0\:.
\end{equation*}
\end{itemize}
\end{lemma}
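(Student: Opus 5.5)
The plan is to argue by contradiction via Frostman's lemma (Lemma \ref{frostmann_lemma}), transferred to the quasi-metric $\nu$ exactly as in the proof of Theorem \ref{exceptional_sharp_thm}. Call $E_1$ and $E_2$ the two exceptional sets in (i) and (ii).

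\textbf{Measurability and reduction to a compact set.} For (i), $\xi\mapsto \chi_{\Gamma_{\alpha,\tau}(\xi)}(x)\,e^{\tau\beta_1(x|\xi)_o}$ is lower semicontinuous, since the cone condition is open and the Gromov product is continuous on $\overline{\X}$. By Fatou, the integral is therefore lower semicontinuous in $\xi$, so $E_1$ is Borel. For (ii), replace $\X\setminus\Gamma_{\alpha,\tau}(\xi)$ by the slightly larger open region $\{x : e^{-\tau(x|\xi)_o}>\alpha' e^{-d(o,x)}\}$ with some $\alpha'<\alpha$. This gives a Borel superset of $E_2$, and it suffices to prove the claim for that superset.

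Now suppose $\mathcal{H}^{\beta\tau}(E_i)>0$. Fix a visual parameter $s\in(0,s_0)$ and the metric $\nu_s$, bi-Lipschitz to $\nu^s$. Then $\mathcal{H}^{\beta\tau/s}_{\nu_s}(E_i)>0$, and we pass to a compact subset of positive measure (Borel sets in a compact metric space). Frostman gives a positive measure $\sigma$ supported in $E_i$ with $\sigma(\B_{\nu_s}(\xi,\rho))\lesssim \rho^{\beta\tau/s}$. Since $\B(\xi,r)\subset \B_{\nu_s}(\xi,Cr^s)$, this yields
\[
\sigma(\B(\xi,r))\lesssim r^{\beta\tau}, \qquad \xi\in\partial\X,\ r>0 .
\]
It then suffices to show that the relevant integral, integrated in $d\sigma(\xi)$, is finite. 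That forces finiteness for $\sigma$-a.e. $\xi$, contradicting $\sigma(E_i)>0$.

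\textbf{Key estimate (Fubini and dyadic layers in the Gromov product).} Fix $x$ with $d:=d(o,x)$, and let $\eta_x$ be the projection of $x$ to $\partial\X$. Monotonicity of Gromov products along rays gives $(\eta_x|\xi)_o\ge (x|\xi)_o$, as in Lemma \ref{shadow_lemma1}. Hence
\[
\{\xi : (x|\xi)_o\ge k\}\subset \B(\eta_x,e^{-k}), \qquad\text{so}\qquad \sigma\{\xi:(x|\xi)_o\ge k\}\lesssim e^{-\beta\tau k}.
\]

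For (i), $x\in\Gamma_{\alpha,\tau}(\xi)$ forces $(x|\xi)_o\ge d/\tau-c$, and always $(x|\xi)_o\le d$. Splitting into layers $(x|\xi)_o\in[k,k+1)$ with $d/\tau-c\le k\le d$ gives
\[
\int_{\{\xi:\,x\in\Gamma_{\alpha,\tau}(\xi)\}} e^{\tau\beta_1(x|\xi)_o}\,d\sigma(\xi)\lesssim \sum_{k\ge d/\tau-c} e^{\tau(\beta_1-\beta)k}\lesssim e^{(\beta_1-\beta)d},
\]
a geometric series that is decreasing because $\beta_1<\beta$. Multiplying by $e^{-\beta_1 d}$ leaves $e^{-\beta d(o,x)}$. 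By Fubini--Tonelli,
\[
\int \int_{\Gamma_{\alpha,\tau}(\xi)} e^{\tau\beta_1(x|\xi)_o}e^{-\beta_1 d(o,x)}\,d\mu(x)\,d\sigma(\xi)\lesssim \int_\X e^{-\beta d(o,x)}\,d\mu(x)<\infty
\]
by \eqref{beta-hypothesis}.

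For (ii), on the complementary region $(x|\xi)_o\le d/\tau+c$, and the layers run over $0\le k\le d/\tau+c$. The sum $\sum_k e^{\tau(\beta_2-\beta)k}$ is now increasing because $\beta_2>\beta$, so it is dominated by its top term $e^{(\beta_2-\beta)d}$. Multiplying by $e^{-\beta_2 d}$ again gives $e^{-\beta d}$, and the same Fubini argument concludes.

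\textbf{Expected obstacle.} The main care point is the transfer of Frostman's lemma, which needs a genuine metric, to the quasi-metric $\nu$ via $\nu_s$ while tracking exponents, together with the measurability of the exceptional sets. The dyadic estimate itself is routine once the ball inclusion $\{(x|\xi)_o\ge k\}\subset\B(\eta_x,e^{-k})$ is in place. The restriction $\tau<h/\beta$ only ensures $\beta\tau<h$, so that positive $\mathcal{H}^{\beta\tau}$-measure sets exist and the statement is nonvacuous.
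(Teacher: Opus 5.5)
Your proposal is correct and follows essentially the same route as the paper. Both use Frostman's lemma transferred to $\nu$ through a visual metric $\nu_s$, then Fubini--Tonelli, then dyadic layers in the Gromov product controlled by the inclusion into visual balls centred at $\eta_x$, which yields the bound $e^{-\beta d(o,x)}$ in both parts. The only differences are harmless refinements: you justify Borel measurability of the exceptional sets, which the paper passes over, and in part (ii) you layer directly in $(x|\xi)_o$ using $(x|\xi)_o<d(o,x)/\tau$ off the tangential region, avoiding the paper's detour through the four-point condition, the sets $\mathcal{E}(x)$ and the restriction to $A_R$.
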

\begin{proof}
To prove part (i), for $\beta_1 \in (0,\beta)$, we consider the function $F$ defined on $\partial \X$ by,
\begin{equation*}
F(\xi):=\int_{\Gamma_{\alpha,\tau}(\xi)} e^{\tau \beta_1(x|\xi)_o}\:e^{-\beta_1 d(o,x)}\:d\mu(x)\:,\:\:\xi \in \partial \X.
\end{equation*}
Again, we recall that $\nu$ only defines a quasi-metric on $\partial \X$. Then as in the proof of part (ii) of Theorem \ref{exceptional_sharp_thm}, we again resort to the visual parameters admissible by the `asymptotic upper curvature bound of $\X$'. Next, incorporating Frostmann's lemma (Lemma \ref{frostmann_lemma}) and the inner regularity of the Hausdorff outer measures (for that visual parameter) and then utilizing the power scaling and bi-Lipschitz invariance of the Hausdorff outer measures, it suffices to prove that $F(\xi)<\infty$ for $\omega$-a.e. $\xi \in \partial \X$, where $\omega$ is any positive measure on $\partial \X$ satisfying,
\begin{equation} \label{Frostmann_condition}
\omega\left(\B(\eta,r)\right) \le Cr^{\beta \tau}\:,\:\:\forall \eta \in \partial \X,\:r>0\:,
\end{equation} 
for some constant $C>0$.

Next, we define, the $\tau$-tangential analogue of Definition \ref{non-tangential-shadow},
\begin{equation*}
\tilde{\Gamma}_{\alpha,\tau}(x):= \left\{\xi \in \partial \X : x \in \Gamma_{\alpha,\tau}(\xi)\right\}\:,\:\:x\in \X\:.
\end{equation*}
Now by Fubini-Tonelli's theorem,
\begin{equation*}
\int_{\partial \X} F(\xi)\:d\omega(\xi)=\int_\X e^{-\beta_1 d(o,x)} \left(\int_{\tilde{\Gamma}_{\alpha,\tau}(x)} e^{\tau \beta_1(x|\xi)_o}\:d\omega(\xi)\right) d\mu(x)\:.
\end{equation*}
We fix $x \in \X$ and focus on estimating the integral,
\begin{equation*}
\int_{\tilde{\Gamma}_{\alpha,\tau}(x)} e^{\tau \beta_1(x|\xi)_o}\:d\omega(\xi)\:.
\end{equation*}
For $k \in \N \cup \{0\}$, we define the sets 
\begin{eqnarray*}
U_k&:=& \left\{\xi \in \partial \X : e^{-(x|\xi)_o} < \left(\alpha\:e^{-d(o,x)}\right)^{\frac{1}{\tau}}\:2^{-k}\right\}\:, \\
V_k&:=& U_k \setminus U_{k+1}\:.
\end{eqnarray*}
Clearly, we have $\tilde{\Gamma}_{\alpha,\tau}(x) \subset \displaystyle\bigcup_{k=0}^\infty V_k$ and thus,
\begin{eqnarray*}
\int_{\tilde{\Gamma}_{\alpha,\tau}(x)} e^{\tau \beta_1(x|\xi)_o}\:d\omega(\xi) &\le& \sum_{k=0}^\infty \int_{V_k} e^{\tau \beta_1(x|\xi)_o}\:d\omega(\xi) \\
& \lesssim & e^{\beta_1\:d(o,x)}\: \sum_{k=0}^\infty \left(2^{k}\right)^{\tau \beta_1}\:\omega\left(U_k\right)\:.
\end{eqnarray*}
Now let $\eta_x \in \partial \X$ be the projection of $x$ from $o$ to $\partial \X$. Then as Gromov products are monotonically increasing along geodesic rays, as in the proof of Lemma \ref{shadow_lemma1}, we get that
\begin{equation*}
U_k \subset \B\left(\eta_x,\left(\alpha\:e^{-d(o,x)}\right)^{\frac{1}{\tau}}\:2^{-k}\right)\:,
\end{equation*}
which along with (\ref{Frostmann_condition}) yields the estimate,
\begin{equation*}
\omega\left(U_k\right) \lesssim e^{-\beta d(o,x)}\:\left(2^{-k}\right)^{\beta \tau}\:.
\end{equation*}
Therefore,
\begin{equation*}
\int_{\tilde{\Gamma}_{\alpha,\tau}(x)} e^{\tau \beta_1(x|\xi)_o}\:d\omega(\xi) \lesssim e^{-(\beta-\beta_1)d(o,x)} \sum_{k=0}^\infty \left(2^{-(\beta-\beta_1)\tau}\right)^k\:.
\end{equation*}
Since, $\beta_1<\beta$, the above series is convergent and hence by (\ref{beta-hypothesis}),
\begin{equation*}
\int_{\partial \X} F(\xi)\:d\omega(\xi) \lesssim \int_X e^{-\beta\:d(o,x)}\:d\mu(x) < \infty\:.
\end{equation*}
Therefore, $F(\xi)<\infty$ for $\omega$-a.e. $\xi \in \partial \X$. This completes the proof of part (i). 

\medskip

To prove part (ii), we let $\beta_2 \in (\beta,h]$ and $\omega$ be as in (\ref{Frostmann_condition}). For $R>0$, we set
\begin{equation*}
F_R(\xi):=\int_{A_R \setminus \Gamma_{\alpha,\tau}(\xi)} e^{\tau \beta_2(x|\xi)_o}\:e^{-\beta_2 d(o,x)} \:d\mu(x)\:,\:\:\xi \in \partial \X\:,
\end{equation*}
where we recall that $A_R = \X \setminus B(o,R)$. It is clear that the integral in part (ii) of Lemma \ref{prop3} is finite if and only if $F_R$ is finite for some $R>0$. Again by Fubini-Tonelli's theorem,
\begin{equation*}
\int_{\partial \X} F_R(\xi)\:d\omega(\xi)=\int_{A_R} e^{-\beta_2 d(o,x)} \left(\int_{\partial \X \setminus \tilde{\Gamma}_{\alpha,\tau}(x)} e^{\tau \beta_2(x|\xi)_o}\:d\omega(\xi)\right) d\mu(x)\:.
\end{equation*}
Fixing $x \in A_R$, we now focus on estimating the integral,
\begin{equation*}
\int_{\partial \X \setminus \tilde{\Gamma}_{\alpha,\tau}(x)} e^{\tau \beta_2(x|\xi)_o}\:d\omega(\xi)\:.
\end{equation*}
Let $\xi \in \partial \X \setminus \tilde{\Gamma}_{\alpha,\tau}(x)$. Thus
\begin{equation*}
\left(\alpha \:e^{-d(o,x)}\right)^\frac{1}{\tau} \le e^{-(x|\xi)_o}\:.
\end{equation*}
Next let $\eta_x \in \partial \X$ be the projection of $x$ from $o$ to $\partial \X$. Then by (\ref{four_pt_condition'}), we have
\begin{equation*}
e^{-(x|\xi)_o} \le e^{2\delta} \left(e^{-(\eta_x|\xi)_o}\:+\:e^{-(\eta_x|x)_o}\right)= e^{2\delta} \left(e^{-(\eta_x|\xi)_o}\:+\:e^{-d(o,x)}\right)\:. 
\end{equation*} 
Hence,
\begin{equation*}
\left(\alpha \:e^{-d(o,x)}\right)^\frac{1}{\tau} \le e^{2\delta} \left(e^{-(\eta_x|\xi)_o}\:+\:e^{-d(o,x)}\right)\:,
\end{equation*}
which yields,
\begin{equation*}
e^{-\frac{1}{\tau}d(o,x)}\left(\alpha^\frac{1}{\tau} e^{-2\delta}\:-\:e^{-\left(1-\frac{1}{\tau}\right)d(o,x)}\right) \le e^{-(\eta_x|\xi)_o}\:.
\end{equation*}
Now as $\tau>1$, there exists $C'>0$ and $R>0$ such that
\begin{equation*}
\partial \X \setminus \tilde{\Gamma}_{\alpha,\tau}(x) \subset \mathcal{E}(x):=\left\{\xi \in \partial \X : e^{-(\eta_x|\xi)_o} \ge C'e^{-\frac{1}{\tau}d(o,x)}\right\}\:,
\end{equation*}
for all $x \in A_R$. Thus,
\begin{equation*}
\int_{\partial \X \setminus \tilde{\Gamma}_{\alpha,\tau}(x)} e^{\tau \beta_2(x|\xi)_o}\:d\omega(\xi) \le \int_{\mathcal{E}(x)} e^{\tau \beta_2(\eta_x|\xi)_o}\:d\omega(\xi)\:.
\end{equation*}
There exists a unique $N_x \in \N$, such that
\begin{equation} \label{defn_Nx}
e^{-N_x}< C' e^{-\frac{1}{\tau}d(o,x)} \le e^{-N_x+1}\:.
\end{equation}
Then for $k=0,1,\dots,N_x$, let
\begin{eqnarray*}
\mathcal{U}_k &:=& \left\{\xi \in \partial \X : e^{-(\eta_x|\xi)_o} \le e^{-k+1} \right\}\:,\\
\mathcal{V}_k &:=& \left\{\xi \in \partial \X : e^{-k} < e^{-(\eta_x|\xi)_o} \le e^{-k+1} \right\}\:.
\end{eqnarray*}
Clearly as $\mathcal{E}(x) \subset \displaystyle\bigcup_{k=0}^{N_x} \mathcal{V}_k$, we have
\begin{equation*}
\int_{\mathcal{E}(x)} e^{\tau \beta_2(\eta_x|\xi)_o}\:d\omega(\xi)  \le  \sum_{k=0}^{N_x} \int_{\mathcal{V}_k} e^{\tau \beta_2(\eta_x|\xi)_o}\:d\omega(\xi) \le \sum_{k=0}^{N_x} e^{\tau \beta_2 k}\:\omega\left(\mathcal{U}_k\right)\:.
\end{equation*}
Then by (\ref{Frostmann_condition}) and the assumption that $\beta_2>\beta$, it follows that
\begin{equation*}
\int_{\mathcal{E}(x)} e^{\tau \beta_2(\eta_x|\xi)_o}\:d\omega(\xi)  \lesssim  \sum_{k=0}^{N_x} \left(e^{\tau (\beta_2-\beta)}\right)^k \lesssim e^{\tau(\beta_2-\beta)(N_x-1)}\:.
\end{equation*}
Now by the definition (\ref{defn_Nx}), we get 
\begin{equation*}
\int_{\mathcal{E}(x)} e^{\tau \beta_2(\eta_x|\xi)_o}\:d\omega(\xi)  \lesssim e^{(\beta_2-\beta)d(o,x)}\:,
\end{equation*}
which in turn by (\ref{beta-hypothesis}) yields,
\begin{equation*}
\int_{\partial \X} F_R(\xi)\:d\omega(\xi) \lesssim \int_{A_R} e^{-\beta\:d(o,x)}\:d\mu(x) <\infty\:.
\end{equation*}
Hence, $F_R(\xi)<\infty$, for $\omega$-a.e. $\xi \in \partial \X$. This completes the proof of Lemma \ref{prop3}.
\end{proof}

As consequences, we obtain the following results:
\begin{corollary} \label{Cor2}  
Let $\psi$ be a non-negative measurable function on $\X$ such that $\psi\:dvol$ is a Radon measure. If $\psi$ satisfies (\ref{Lp_condition'}) for some $\beta \in (0,h)$ and some $p>1$, then for each $\tau \in (1,h/\beta)$, there exists a set $E \subset \partial \X$ with $\mathcal{H}^{\beta \tau}(E)=0$, such that $H_\psi$ has tangential limit $0$ of degree $\tau$ at all $\xi \in \partial \X \setminus E.$
\end{corollary}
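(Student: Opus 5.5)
The plan is to combine Lemma \ref{prop2} with Lemma \ref{prop3}, applied to the Radon measure $d\mu=\psi^p\:dvol$. Hypothesis (\ref{Lp_condition'}) says exactly that $\mu$ satisfies (\ref{beta-hypothesis}) with the given $\beta\in(0,h)$. Fix $\tau\in(1,h/\beta)$, choose any $\beta_1\in(0,\beta)$, and take $\beta_2=h$; this is allowed in part (ii) of Lemma \ref{prop3}, since $\beta_2\in(\beta,h]$. Both $\beta_1$ and $\beta_2$ are now fixed and independent of $\xi$.

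Since tangential limits must be checked for every aperture $\alpha>1$, the exceptional set has to absorb all apertures. For integers $m\ge 2$ I define $E_m^{(1)}$ as the set of $\xi$ for which $\int_{\Gamma_{m,\tau}(\xi)} e^{\tau\beta_1(x|\xi)_o}e^{-\beta_1 d(o,x)}\psi^p\,dvol=\infty$. I define $E_m^{(2)}$ analogously, with the integral over $\X\setminus\Gamma_{m,\tau}(\xi)$ and exponent $\beta_2$. Set $E:=\bigcup_{m}(E_m^{(1)}\cup E_m^{(2)})$. Lemma \ref{prop3} gives $\mathcal{H}^{\beta\tau}(E_m^{(i)})=0$ for each $m$ and $i$, and countable subadditivity of the outer measure $\mathcal{H}^{\beta\tau}$ then gives $\mathcal{H}^{\beta\tau}(E)=0$.

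Now fix $\xi\notin E$ and an arbitrary $\alpha>1$, and pick an integer $m\ge\alpha$. Then $\Gamma_{\alpha,\tau}(\xi)\subset\Gamma_{m,\tau}(\xi)$, and both integrals in Lemma \ref{prop2} are finite for aperture $m$, because $\xi\notin E_m^{(1)}\cup E_m^{(2)}$. Lemma \ref{prop2} therefore gives $H_\psi(x)\to0$ as $x\to\xi$ within $\Gamma_{m,\tau}(\xi)$, and hence also within the smaller region $\Gamma_{\alpha,\tau}(\xi)$. Since $\alpha$ was arbitrary, $H_\psi$ has tangential limit $0$ of degree $\tau$ at $\xi$.

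The main point to watch is that the hypotheses match exactly. Lemma \ref{prop2} requires $p>1$, and the corollary assumes this. It requires $\beta_1,\beta_2\in(0,h)$ with the endpoint excluded, so $\beta_2=h$ is not admissible there, and I will instead choose $\beta_2\in(\beta,h)$, which is possible since $\beta<h$. One might also worry that using the larger region $\Gamma_{m,\tau}$ for the off-region integral changes the decomposition in Lemma \ref{prop2}. It does not: Lemma \ref{prop2} is applied with aperture $m$ throughout, and only the final restriction of the limit to $\Gamma_{\alpha,\tau}(\xi)$ uses the containment. Apart from this bookkeeping, the argument is a direct union-bound assembly.
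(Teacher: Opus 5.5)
Your proposal is correct and follows the paper's proof, which applies Lemmata \ref{prop2} and \ref{prop3} to $d\mu=\psi^p\,dvol$. Your countable union over integer apertures $m\ge 2$, together with the choice $\beta_1\in(0,\beta)$, $\beta_2\in(\beta,h)$, spells out bookkeeping the paper leaves implicit. As you noted yourself, discard the initial choice $\beta_2=h$: Lemma \ref{prop3}(ii) allows it, but Lemma \ref{prop2} does not, since its H\"older step needs $\beta_2<h$ for Lemma \ref{energy_estimates} to apply.
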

\begin{proof}
The result follows at once from Lemmata \ref{prop2} and \ref{prop3} by taking the measure, $d\mu:=\psi^p\:dvol$.
\end{proof}

\begin{corollary} \label{Cor3}
Let $\mu$ and $\beta$ be as in Lemma \ref{prop3}. The for each $\alpha>1,\:\tau \in (1,h/\beta)$,
\begin{equation*}
\mathcal{H}^{\beta \tau}\left(\left\{\xi \in \partial \X:\mu\left(\Gamma_{\alpha,\tau}(\xi)\right)=\infty\right\}\right)=0\:.
\end{equation*}
\end{corollary}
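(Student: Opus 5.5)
The plan is to derive Corollary \ref{Cor3} directly from part (i) of Lemma \ref{prop3}, by comparing the mass $\mu(\Gamma_{\alpha,\tau}(\xi))$ with the weighted integral that appears there. The point is that, on the tangential region $\Gamma_{\alpha,\tau}(\xi)$, the weight $e^{\tau\beta_1(x|\xi)_o}e^{-\beta_1 d(o,x)}$ is bounded below by a positive constant. Indeed, for $x\in\Gamma_{\alpha,\tau}(\xi)$ the definition gives $e^{-\tau(x|\xi)_o}<\alpha e^{-d(o,x)}$. Equivalently, $\tau(x|\xi)_o > d(o,x)-\log\alpha$. Hence for any $\beta_1>0$,
\begin{equation*}
e^{\tau\beta_1(x|\xi)_o}\,e^{-\beta_1 d(o,x)} > \alpha^{-\beta_1}\:,\qquad x\in \Gamma_{\alpha,\tau}(\xi)\:.
\end{equation*}

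Next, fix any $\beta_1\in(0,\beta)$, for instance $\beta_1=\beta/2$. Integrating the pointwise bound over $\Gamma_{\alpha,\tau}(\xi)$ against $\mu$ gives
\begin{equation*}
\mu\left(\Gamma_{\alpha,\tau}(\xi)\right) \le \alpha^{\beta_1}\int_{\Gamma_{\alpha,\tau}(\xi)} e^{\tau \beta_1(x|\xi)_o}\:e^{-\beta_1 d(o,x)}\:d\mu(x)\:.
\end{equation*}
It follows that the set $\{\xi\in\partial\X : \mu(\Gamma_{\alpha,\tau}(\xi))=\infty\}$ is contained in the exceptional set of part (i) of Lemma \ref{prop3}. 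That lemma applies because $\mu$ satisfies (\ref{beta-hypothesis}), $\tau\in(1,h/\beta)$ and $\alpha>1$, so the exceptional set has $\mathcal{H}^{\beta\tau}$-outer measure zero. By monotonicity of the outer measure $\mathcal{H}^{\beta\tau}$, the conclusion follows without any measurability issue.

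There is no real obstacle here. The only step that needs care is the direction of the inequality coming from the definition of $\Gamma_{\alpha,\tau}(\xi)$, which yields a lower bound on the weight rather than an upper bound. Once that is checked, the whole Hausdorff-measure content (Frostman's lemma and the transfer to visual metrics) has already been handled inside Lemma \ref{prop3}.
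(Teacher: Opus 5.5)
Your argument is correct and is essentially the paper's proof. The pointwise lower bound $e^{\tau\beta_1(x|\xi)_o}e^{-\beta_1 d(o,x)} > \alpha^{-\beta_1}$ on $\Gamma_{\alpha,\tau}(\xi)$ places the set where $\mu(\Gamma_{\alpha,\tau}(\xi))=\infty$ inside the exceptional set of part (i) of Lemma \ref{prop3}, and monotonicity of $\mathcal{H}^{\beta\tau}$ finishes it (your non-strict integrated inequality is in fact slightly more careful than the paper's strict one, which fails when $\mu(\Gamma_{\alpha,\tau}(\xi))=0$).
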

\begin{proof}
As for any $\beta_1 \in (0,\beta)$,
\begin{equation*}
\int_{\Gamma_{\alpha,\tau}(\xi)} e^{\tau \beta_1(x|\xi)_o}\:e^{-\beta_1 d(o,x)}\:d\mu(x) > \frac{\mu\left(\Gamma_{\alpha,\tau}(\xi)\right)}{\alpha^{\beta_1}}\:,
\end{equation*}
the result follows from part (i) of Lemma \ref{prop3}.
\end{proof}

We now present the proof of Theorem \ref{tangential_limit_green_thm}.
\begin{proof}[Proof of Theorem \ref{tangential_limit_green_thm}]
Let $\beta \in (0,h),\:p>n/2,\:\tau \in (1,h/\beta)$. We start off by decomposing the Green potential into polar and non-polar parts:
\begin{equation*}
G[\psi](x)=u_1(x)\:+\:u_2(x)\:,\:\:x \in \X\:,
\end{equation*}
where
\begin{eqnarray*}
&&u_1(x):= \int_{B(x,1)} G(x,y)\:\psi(y)\:dvol(y)\:,\\
&&u_2(x):= \int_{\X \setminus B(x,1)} G(x,y)\:\psi(y)\:dvol(y)\:.
\end{eqnarray*} 

We first focus on $u_1$. Proceeding as in the proof of Lemma \ref{polar_lemma}, we get
\begin{equation*}
u_1(x) \lesssim \left(\int_{B(x,1)} \psi(y)^p\:dvol(y)\right)^{\frac{1}{p}}\:,
\end{equation*}
and that for any $\xi \in \partial \X,\alpha>1$ and $R>0$, if $x \in \Gamma_{\alpha,\tau}(\xi) \cap A_{R+1}$, then $B(x,1) \subset \Gamma_{e^{\tau+1}\alpha,\tau}(\xi) \cap A_R$. Thus, we have for $x \in \Gamma_{\alpha,\tau}(\xi) \cap A_{R+1}$,
\begin{equation} \label{green_thm2_eq1}
u_1(x) \lesssim \left(\int_{\Gamma_{e^{\tau+1}\alpha,\tau}(\xi) \cap A_R} \psi(y)^p\:dvol(y)\right)^{\frac{1}{p}}\:.
\end{equation}
Next setting $d\mu:=\psi^p\:dvol$, by Corollary \ref{Cor3}, there exists a set $E_1 \subset \partial \X$ with $\mathcal{H}^{\beta \tau}(E_1)=0$ such that for all $\xi \in \partial \X \setminus E_1$,
\begin{equation*}
\mu\left(\Gamma_{e^{\tau+1}\alpha,\tau}(\xi)\right)=\int_{\Gamma_{e^{\tau+1}\alpha,\tau}(\xi)} \psi(y)^p\:dvol(y) <\infty\:.
\end{equation*}
Thus for all $\xi \in \partial \X \setminus E_1$, by (\ref{green_thm2_eq1}), we get
\begin{equation*} 
\limsup_{\substack{x \to \xi \\ x \in \Gamma_{\alpha,\tau}(\xi)}} u_1(x) \lesssim \lim_{R \to \infty} \left(\int_{\Gamma_{e^{\tau+1}\alpha,\tau}(\xi) \cap A_R} \psi(y)^p\:dvol(y)\right)^{\frac{1}{p}}=0\:.
\end{equation*}
We are now left with $u_2$. By the pointwise decay of the Green function given by (\ref{green_estimate}), away from the pole, we have 
\begin{equation*}
u_2(x) \lesssim \int_{\X} e^{-hd(x,y)}\:\psi(y)\:dvol(y)\:.
\end{equation*}
Now as,
\begin{equation*}
d(x,y)=d(o,x)\:+\:d(o,y)\:-\:2(x|y)_o\:,
\end{equation*}
we identify by (\ref{related_harmonic_fn}) that
\begin{eqnarray*}
u_2(x) &\lesssim & e^{-hd(o,x)}\int_{\X} e^{2h(x|y)_o}\:e^{-hd(o,y)}\:\psi(y)\:dvol(y) \\
&=& H_\psi(x)\:.
\end{eqnarray*}
Then by Corollary \ref{Cor2}, there exists a set $E_2 \subset \partial \X$ with $\mathcal{H}^{\beta \tau}(E_2)=0$, such that $u_2$ has tangential limit $0$ of degree $\tau$ at all $\xi \in \partial \X \setminus E_2$. Thus Theorem \ref{tangential_limit_green_thm} follows by taking $E:=E_1 \cup E_2$.
\end{proof}

\section{Non-tangential, tangential limits and exceptional sets of superharmonic functions} \label{sec7}
In this section, we complete the proofs of Theorems \ref{non-tangential_superharmonic_thm} and \ref{tangential_limit_superharmonic_thm}.

\begin{proof}[Proof of Theorem \ref{non-tangential_superharmonic_thm}]
By the Riesz decomposition theorem (Lemma \ref{riesz_decomposition_lemma}) and the Martin representation formula (\ref{martin_rep}), there exists a unique Radon measure $\mu$ on $\partial \X$ (the boundary measure of the greatest harmonic minorant of $f$) such that
\begin{equation*}
f=\PP[\mu]\:+\:G[\psi]\:,\:\:\text{ on } \X.
\end{equation*}
Let $\omega$ be the absolutely continuous component of $\mu$ with respect to $\lambda_o$. Then by Theorem \ref{Fatou_thm}, there exists a set of boundary points, say $E_1$, with $\lambda_o(E_1)=1$ such that the non-tangential limits of $\PP[\mu]$ exist at every $\xi \in E_1$ and equal the Radon-Nikodym derivative $\frac{d\omega}{d\lambda_o}$. 

On the other hand, by Theorem \ref{non-tangential_green_thm}, there exists a set of boundary points, say $E_2$, with $\lambda_o(E_2)=1$ such that the non-tangential limits of $G[\psi]$ exist at every $\xi \in E_2$ and equal $0$. 

Thus the conclusion follows on $E_1 \cap E_2$.
\end{proof}

\begin{proof}[Proof of Theorem \ref{tangential_limit_superharmonic_thm}]
By the Riesz decomposition theorem (Lemma \ref{riesz_decomposition_lemma}), there exists a unique non-negative harmonic function $F_f$ (the greatest harmonic minorant of $f$) such that 
\begin{equation*}
f=F_f\:+\:G[\psi]\:,\:\:\text{ on } \X.
\end{equation*}
Moreover, $F_f$ is explicitly given by (\ref{form_harmonic_minorant}):
\begin{equation*} 
F_f(x) = \displaystyle\lim_{r \to +\infty} \int_{T^1_{x}X} f\left(\gamma_{x,v}(r)\right)\: d\theta_{x}(v) \:,\:\: x \in \X\:.
\end{equation*}
Thus by the hypothesis (\ref{spherical_average_decay}), we have that $F_f(x_0)=0$. Then by the maximum principle, $F_f \equiv 0$ and hence, $f=G[\psi]$. The result now follows from Theorem \ref{tangential_limit_green_thm}.
\end{proof}

\section{Some counter-examples} \label{sec8}
In this section, we prove Theorems \ref{first_counter-example} and \ref{second_counter-example}. But before that, we prove the following result relating Riemannian angle and the Gromov product:

\begin{lemma} \label{angle_gromov_relation}
Let $\X$ be a Hadamard manifold of dimension $\ge 2$, with sectional curvature bounds $-b^2 \le K_\X \le 0$. Fix $x \in \X$. Let $y \in \X$ and $\xi \in \partial \X$ such that $x,y$ and $\xi$ are not collinear. Then the Riemannian angle between $y$ and $\xi$ subtended at $x$, say $\theta$, satisfies 
\begin{equation*}
e^{-2b{(y|\xi)}_x} - e^{-2bd(x,y)} \le \sin^2 \frac{\theta}{2} \:.
\end{equation*}
\end{lemma}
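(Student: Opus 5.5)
The plan is to approximate $\xi$ by points on the geodesic ray from $x$ to $\xi$. For each such point we apply Alexandrov's angle comparison (\ref{finite_angle_comparison}) to a finite triangle, compute in the model plane $\mathbb{H}^2(-b^2)$ with the hyperbolic law of cosines, and then pass to the limit along the ray.

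\textbf{Step 1 (setup).} Let $\gamma$ be the unit-speed geodesic ray with $\gamma(0)=x$ and $\gamma(\infty)=\xi$, and set $z_t:=\gamma(t)$. Write $a:=d(x,y)$, $e:=d(x,z_t)=t$ and $c:=d(y,z_t)$. The Riemannian angle at $x$ between $y$ and $z_t$ equals $\theta$ for every $t>0$, because $z_t$ lies on the ray from $x$ towards $\xi$. Since $x,y,\xi$ are not collinear, the points $x,y,z_t$ are not collinear either, so a comparison triangle exists. Let $\theta_b(t)$ be the comparison angle at $\overline{x}$. By (\ref{finite_angle_comparison}) we have $\theta_b(t)\le\theta$. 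Both angles lie in $[0,\pi]$, where $s\mapsto\sin^2(s/2)$ is increasing, so
\begin{equation*}
\sin^2\tfrac{\theta_b(t)}{2}\le \sin^2\tfrac{\theta}{2}\:.
\end{equation*}

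\textbf{Step 2 (model computation).} In $\mathbb{H}^2(-b^2)$ the law of cosines reads
\begin{equation*}
\cosh(bc)=\cosh(ba)\cosh(be)-\sinh(ba)\sinh(be)\cos\theta_b .
\end{equation*}
Substituting $\cos\theta_b=1-2\sin^2(\theta_b/2)$ and using $\cosh(ba)\cosh(be)-\sinh(ba)\sinh(be)=\cosh(b(a-e))$ gives
\begin{equation*}
\sin^2\tfrac{\theta_b}{2}=\frac{\cosh(bc)-\cosh(b(a-e))}{2\sinh(ba)\sinh(be)}\:.
\end{equation*}
We now bound this from below. In the numerator use $\cosh(bc)\ge e^{bc}/2$. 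In the denominator use $2\sinh(ba)\sinh(be)\le e^{b(a+e)}/2$; if the numerator is negative the resulting bound is trivially true anyway. This yields
\begin{equation*}
\sin^2\tfrac{\theta_b}{2}\ge e^{-b(a+e-c)}-2\cosh(b(a-e))e^{-b(a+e)}=e^{-2b(y|z_t)_x}-e^{-2ba}-e^{-2bt}\:.
\end{equation*}

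\textbf{Step 3 (limit).} Combine Step 1 and Step 2 and let $t\to\infty$. Then $e^{-2bt}\to0$. Also $(y|z_t)_x\to(y|\xi)_x$, by the continuous extension of the Gromov product to $\overline{\X}$ recalled in (\ref{Gromov_product_bdry}); this uses that $\X$ is proper, $CAT(0)$ and Gromov hyperbolic in our setting, and in general it follows from monotonicity of $t\mapsto(y|z_t)_x$. In the limit we obtain $e^{-2b(y|\xi)_x}-e^{-2bd(x,y)}\le\sin^2\frac{\theta}{2}$.

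The only delicate point is the elementary hyperbolic-trigonometry inequality in Step 2. The error terms must come out exactly as $e^{-2ba}$ plus a term that vanishes as $t\to\infty$. This works because of the identity $2\cosh(b(a-e))e^{-b(a+e)}=e^{-2be}+e^{-2ba}$.
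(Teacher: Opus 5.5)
Your proof is correct and follows essentially the same route as the paper: Alexandrov angle comparison for the triangles $\Delta(x,y,\gamma(t))$, the half-angle form of the hyperbolic law of cosines in $\mathbb{H}^2(-b^2)$, and the limit $t\to\infty$. The only difference is in the elementary estimate. You bound the law-of-cosines quotient from below at each finite $t$, picking up an extra $-e^{-2bt}$ term that vanishes in the limit. The paper instead computes the exact limits of the two terms, obtains the slightly sharper bound $\frac{e^{-2b(y|\xi)_x}-e^{-2bd(x,y)}}{1-e^{-2bd(x,y)}}\le\sin^2\frac{\theta}{2}$, and then drops the denominator.
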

\begin{proof}
Let $\gamma$ denote the geodesic ray starting from $x$ and hitting $\partial \X$ at $\xi$. Now for any $t\in (0,\infty)$, we consider the geodesic triangle $\Delta(x,y,\gamma(t))$. Then let $\theta^t_b$ be the angle corresponding to $\theta$ in the comparison triangle $\overline{\Delta}(x,y,\gamma(t))$ in $\mathbb H^2(-b^2)$. Now the curvature pinching condition yields by an application of (\ref{finite_angle_comparison}) that 
\begin{equation*}
\sin\frac{\theta^t_b}{2}\le \sin\frac{\theta}{2} \quad \text{for all} \quad t\in (0,\infty)\:.
\end{equation*} 
Then by the hyperbolic law of cosines,
\begin{equation*}
\sin^2\frac{\theta^t_b}{2}=\frac{\cosh bd(y,\gamma(t))-\cosh b(d(y,x)-d(\gamma(t),x))}{2\sinh bd(y,x)\sinh bd(\gamma(t),x)}
\end{equation*}
Now
\begin{align*}
&\lim_{t\to\infty} \frac{\cosh bd(y,\gamma(t))}{2\sinh bd(y,x)\sinh bd(\gamma(t),x)}\\
=&\lim_{t\to\infty} \frac{e^{bd(y,\gamma(t))}+e^{-bd(y,\gamma(t))}}{(e^{bd(y,x)}-e^{-bd(y,x)})(e^{bd(x,\gamma(t))}-e^{-bd(x,\gamma(t))})}\\
=&\frac{e^{-2b(y|\xi)_x}}{1-e^{-2bd(x,y)}}\:,
\end{align*}
and
\begin{align*}
&\lim_{t\to\infty} \frac{\cosh b(d(y,x)-d(\gamma(t),x))}{2\sinh bd(y,x)\sinh bd(\gamma(t),x)}\\
=&\lim_{t\to\infty} \frac{e^{b(d(y,x)-d(\gamma(t),x))}+e^{-b(d(y,x)-d(\gamma(t),x))}}{(e^{bd(y,x)}-e^{-bd(y,x)})(e^{bd(x,\gamma(t))}-e^{-bd(x,\gamma(t))})}\\
=&\frac{e^{-2bd(x,y)}}{1-e^{-2bd(x,y)}}\:.
\end{align*}
Hence,
\begin{equation*}
e^{-2b{(y|\xi)}_x} - e^{-2bd(x,y)} \le\frac{e^{-2b{(y|\xi)}_x} - e^{-2bd(x,y)}}{1-e^{-2bd(x,y)}} \le \sin^2 \frac{\theta}{2} \:.
\end{equation*}
\end{proof} 

\begin{proof}[Proof of Theorem \ref{first_counter-example}]
Given $\alpha>1$, we first construct a countable set $\mathcal{A} \subset \X$ such that given any point $\xi \in \partial \X$, there exists a sequence $\{x_k\}_{k=1}^\infty \subset \mathcal{A} \cap \Gamma_\alpha(\xi)$ such that $x_k \to \xi$ as $k \to \infty$.

We start by recalling that the sectional curvature of harmonic manifolds are bounded below (see \cite{Besse, RR}), i.e., there exists $b>0$, such that $K_\X \ge -b^2$. Thus along with our non-positivity assumption, we get the pinching condition $-b^2 \le K_\X \le 0$. Now take any sequence $\{t_k\}_{k=1}^\infty$ of positive real numbers increasing to $\infty$. On each geodesic sphere $S(o,t_k)$, let $\mathcal{A}_k$ denote a maximal $r_k$-net with respect to the Riemannian angle metric $\theta_o(\cdot,\cdot)$ where
\begin{equation} \label{counter1_eq1}
r_k:= \left(\alpha^{2b}-1\right)^\frac{1}{2}\:e^{-bt_k}\:.
\end{equation} 
We consider,
\begin{equation*}
\mathcal{A}:= \bigcup_{k=1}^\infty \mathcal{A}_k\:.
\end{equation*}
\begin{figure}[h]
\begin{center}
\begin{tikzpicture}
	\fill[gray] (5,8) --plot[domain=5:10, smooth] (\x,{(2*(\x^2)/25)-(9*(\x)/5)+15}) -- plot[domain=10:5, smooth] (\x,{-(2*(\x^2)/25)+(9*(\x)/5)-5}) -- cycle;
	\draw  (5,5) circle [radius=5];
	\draw  (5,5) circle [radius=3];
	\draw [fill] (5,5) circle [radius=.05] node [left] {$o$};
	\draw [dashed] (5,5) -- (10,5);
	\draw [fill] (8,5) circle [radius=.05] node [below] {};
	\pgftext[at={\pgfpoint{8.3cm}{4.7cm}}]{$y_k$};
	\draw [fill] (10,5) circle [radius=.05] node [right] {$\xi$};
	\draw[gray][thick] plot[domain=5:10, smooth] (\x,{(2*(\x^2)/25)-(9*(\x)/5)+15});
	\draw[gray][thick] plot[domain=5:10, smooth] (\x,{-(2*(\x^2)/25)+(9*(\x)/5)-5});
	\draw [yellow][fill] (7.958,5.5) circle [radius=.05] node [below] {};
	\pgftext[at={\pgfpoint{8.3cm}{5.3cm}}]{\textcolor{yellow}{$x_k$}};
	\pgftext[at={\pgfpoint{9.5cm}{2cm}}]{$\partial \X$};
	\pgftext[at={\pgfpoint{8cm}{7cm}}]{$S(o,t_k)$};
	\pgftext[at={\pgfpoint{6cm}{6cm}}]{$\Gamma_\alpha(\xi)$};
	 \end{tikzpicture}
\end{center}
\caption{The sequence $\{x_k\}_{k=1}^\infty$}
\label{figure2}
\end{figure}
Now let $\xi \in \partial \X$ and consider $y_k:=\gamma_\xi(t_k) \in S(o,t_k)$. If $y_k \in \mathcal{A}_k$, then we set $x_k:=y_k$. Otherwise, by construction there exists $x \in \mathcal{A}_k$ such that 
\begin{equation} \label{counter1_eq2}
\theta_o(x,\xi)=\theta_o(x,y_k) < r_k\:.
\end{equation}
We set $x_k:=x$. To show that $x_k \in \Gamma_\alpha(\xi)$, we apply Lemma \ref{angle_gromov_relation}, (\ref{counter1_eq2}), (\ref{counter1_eq1}) and get that
\begin{eqnarray*}
e^{-2b(x_k|\xi)_o} &\le & e^{-2bd(o,x_k)} \:+\: \sin^2\left(\frac{\theta_o(x_k,\xi)}{2}\right) \\
&<& e^{-2bt_k}\:+\: \frac{r^2_k}{4} \\
&=& e^{-2bt_k}\:+\:\left(\frac{\alpha^{2b}-1}{4}\right)e^{-2bt_k}\:.
\end{eqnarray*}
Now as $\alpha>1$, it follows that
\begin{equation*}
e^{-2b(x_k|\xi)_o} < \alpha^{2b}\:e^{-2bt_k}\:,
\end{equation*}
i.e.
\begin{equation*}
e^{-(x_k|\xi)_o} < \alpha\:e^{-t_k}\:,
\end{equation*}
and hence, $x_k \in \Gamma_\alpha(\xi)$. Then as $t_k \uparrow \infty$, clearly, $x_k \to \xi$. This completes the construction of $\mathcal{A}$.

Next, we enumerate the elements of $\mathcal{A}$ (in increasing order of $t_k$) as $\{x_j\}_{j=1}^\infty$ and choose a sequence of positive real numbers $\{\tau_j\}_{j=1}^\infty$ such that
\begin{equation} \label{counter1_eq3}
\sum_{j=1}^\infty \tau_j\: e^{-hd(o,x_j)} < \infty\:.
\end{equation}
We now define the measure $\mu$ on $\X$ by,
\begin{equation*}
\mu:=\sum_{j=1}^\infty \tau_j\:\delta_{x_j}\:,
\end{equation*}
where $\delta_x$ denotes the Dirac point mass at $x$. Then by (\ref{counter1_eq3}) and Lemma \ref{Green_potential_lemma}, $G[\mu]$ is well-defined and thus a positive superharmonic function. Finally, by construction, we note that for every $\xi \in \partial \X$, we have a sequence of poles of $G[\mu]$ converging to $\xi$ within $\Gamma_\alpha(\xi)$. This completes the proof.
\end{proof}

\begin{proof}[Proof of Theorem \ref{second_counter-example}]
Let $\alpha >1$. Now, as in the proof of Theorem \ref{first_counter-example}, let $\{x_j\}_{j=1}^\infty \subset \X$ such that  given any $\xi \in \partial \X$, there is a subsequence $\{x_{j_k}\}_{k=1}^\infty \subset \Gamma_\alpha(\xi)$ with $x_{j_k} \to \xi$ as $k \to \infty$.

Let $p \in [1,n/2]$. For each $j$, we choose $r_j \in (0,1)$ and $\tau_j>0$ such that
\begin{itemize}
\item[(a)] the family of geodesic balls $\{B(x_j,r_j)\}_{j=1}^\infty$ is pairwise disjoint and
\item[(b)] $\displaystyle\sum_{j=1}^\infty e^{-d(o,x_j)}\:\tau^{1/p}_j < \infty\:.$
\end{itemize}
Next, let $\{a_j\}_{j=1}^\infty$ be a sequence of positive real numbers so that $a_j \uparrow \infty$. For each $j$, we choose a non-negative measurable function $\psi_j$ satisfying,
\begin{enumerate}
\item $Supp(\psi_j) \subset B(x_j,r_j)\:,$
\item $\int_{B(x_j,r_j)} \psi^p_j\:dvol < \tau_j\:$ and
\item $\int_\X \psi_j(y)\:G(x_j,y)\:dvol(y) >a_j\:.$
\end{enumerate} 
Clearly, for each $j$, we can find a $\psi_j$ satisfying (1) and (2) above. Now if we cannot find a $\psi_j$ satisfying (1)-(3), then we have
\begin{equation*}
\int_{B(x_j,r_j)} \psi(y)\:G(x_j,y)\:dvol(y) \le a_j\tau^{-1/p}_j\:,
\end{equation*}
for all non-negative measurable functions $\psi$ satisfying $\int_{B(x_j,r_j)}\psi^{p}\:dvol \le 1\:.$ By duality, it then follows that $G_{x_j} \in L^q\left(B(x_j,r_j)\right)$, for $q \ge n/(n-2)$, a contradiction. Hence, we can always make our choice of such $\psi_j$.

We now set, 
\begin{equation*}
\psi:=\sum_{j=1}^\infty \psi_j\:.
\end{equation*}
We first note that by radiality of the volume density, we have a uniform positive constant $C>0$ such that $vol\left(B(x_j,r_j)\right) \le C$. We now estimate,
\begin{eqnarray*}
\int_\X e^{-hd(o,y)}\:\psi(y)\:dvol(y) = \sum_{j=1}^\infty \int_{B(x_j,r_j)} e^{-hd(o,y)}\:\psi_j(y)\:dvol(y)\:.
\end{eqnarray*}
Next, by the triangle inequality and H\"older's inequality,
\begin{eqnarray*}
\sum_{j=1}^\infty \int_{B(x_j,r_j)} e^{-hd(o,y)}\:\psi_j(y)\:dvol(y) &\lesssim &   \sum_{j=1}^\infty e^{-hd(o,x_j)} \int_{B(x_j,r_j)} \psi_j(y)\:dvol(y) \\
&\lesssim &  \sum_{j=1}^\infty e^{-hd(o,x_j)} \left(\int_{B(x_j,r_j)} \psi^p_j\:dvol\right)^{\frac{1}{p}}
\end{eqnarray*}
Thus by the assumption (2) above, we have
\begin{equation*}
\int_\X e^{-hd(o,y)}\:\psi(y)\:dvol(y) \lesssim \sum_{j=1}^\infty e^{-hd(o,x_j)} \tau^{\frac{1}{p}}_j\:.
\end{equation*}
Then as $h \ge n/2 >1$, by the assumption (b) above, we get that
\begin{equation*}
\int_\X e^{-hd(o,y)}\:\psi(y)\:dvol(y) < \infty\:.
\end{equation*}
Hence by Lemma \ref{Green_potential_lemma}, $G[\psi]$ a positive superharmonic function on $\X$ with $\psi$ being the density of its Riesz measure with respect to the volume measure.  We next verify the conditions (\ref{density_counter_condition}) and (\ref{boundary_blowup}).

By the disjointness of the supports of the functions $\psi_j$ and the triangle inequality,
\begin{equation*}
\int_\X e^{-hd(o,y)}\:\psi(y)^p\:dvol(y) \lesssim \sum_{j=1}^\infty e^{-hd(o,x_j)} \int_{B(x_j,r_j)} \psi^p_j\:dvol\:.
\end{equation*}
Then again by the assumption (2) above,
\begin{equation*}
\int_\X e^{-hd(o,y)}\:\psi(y)^p\:dvol(y) \lesssim \sum_{j=1}^\infty e^{-hd(o,x_j)} \tau_j\:.
\end{equation*}
Now by the convergence of the series in (b), there exists $j_0 \in \N$ such that for all $j \ge j_0$,
$$e^{-d(o,x_j)}\:\tau^{1/p}_j \le 1\:.$$
Now as $1 \le p \le n/2 \le h$, we have for all $j \ge j_0$,
\begin{equation*}
e^{-hd(o,x_j)} \tau_j \le \left(e^{-d(o,x_j)}\:\tau^{1/p}_j\right)^p \le e^{-d(o,x_j)}\:\tau^{1/p}_j\:.
\end{equation*}
Thus,
\begin{equation*}
\int_\X e^{-hd(o,y)}\:\psi(y)^p\:dvol(y) \lesssim \sum_{j=1}^\infty e^{-d(o,x_j)} \tau^{1/p}_j <\infty\:,
\end{equation*}
verifying (\ref{density_counter_condition}). Finally, since by (3), we have
\begin{equation*}
G[\psi](x_j) \ge \int_\X \psi_j(y)\:G(x_j,y)\:dvol(y) >a_j\:,
\end{equation*}
by construction of the sequences $\{x_j\}_{j=1}^\infty$ and $\{a_j\}_{j=1}^\infty$, the condition (\ref{boundary_blowup}) is also satisfied.
\end{proof}

\section{Concluding remarks} \label{sec9}
In this section, we make some remarks and pose some new problems:
\begin{enumerate}
\item In Theorems \ref{Fatou_thm}, \ref{exceptional_thm} and \ref{exceptional_sharp_thm}, we study the boundary behaviour of generalized Poisson integrals, corresponding to eigenfunctions of $\Delta$ with eigenvalues $\beta^2-\rho^2$ for $\beta>0$. A natural question now is whether these are all such eigenfunctions? The key to answering this question lies in obtaining a Martin representation formula for these eigenfunctions. In the setting of Damek-Ricci spaces, it is well-known \cite[Theorem 7.11]{DR_JGEA}. Thus a natural attempt would be to generalize this to harmonic manifolds of purely exponential volume growth. 
\item In Theorems \ref{Fatou_thm} and \ref{non-tangential_superharmonic_thm}, we have assumed $\left(\partial \X,\nu,\lambda_o\right)$ to be $h$-doubling. This is indeed true for the known cases of rank one Riemannian symmetric spaces of non-compact type and Damek-Ricci spaces. However, whether this holds true for general harmonic manifolds of purely exponential volume growth, remains to be seen. 
\item Whether, the condition on the decay of spherical averages given by (\ref{spherical_average_decay}) in Theorem \ref{tangential_limit_superharmonic_thm}, can be replaced by a suitable adaptation of \cite{Nagel,Cifuentes} seems to be an interesting problem.
\end{enumerate}

\section*{Acknowledgements}  The author is supported by the Institute Post Doctoral Fellowship of Indian Institute of Technology, Bombay.

\bibliographystyle{amsplain}

\end{document}